\newtheorem{thm}{Theorem}[section]
\newtheorem{thm*}{Theorem*}
\newtheorem{lemma}[thm]{Lemma} 
\newtheorem{theorem}[thm]{Theorem}
\newtheorem{observation}[thm]{Observation}
\newtheorem{corollary}[thm]{Corollary}
\newtheorem{claim}[thm]{Claim}
\newtheorem{definition}[thm]{Definition}
\theoremstyle{definition}
\newtheorem{remark}[thm]{Remark}
\newcommand{\useInnerQed}{\renewcommand{\qedsymbol}{$\blacksquare$}}
\def\ca#1{{\cal#1}}
\def\sem{\setminus}
\def\len(#1){{\|#1\|}}%{{\text{\rm len}}}
\def\fw{{\text{\sl fw}}}
\def\ew{{\text{\sl ew}}}
\def\ewn{{\text{\sl ewn}}}
\def\ewnd{{\text{\sl ewn}^*}}
\def\cutt{/\!\!/}
\def\dee{{\lfloor\Delta(G)/2\rfloor}}
\def\deee{{\lfloor\Delta/2\rfloor}}
\def\ignore#1{{}}
\def\sizealpha{{\len(\alpha)}}
\def\oldh{{z}}
\def\Halpha{{H\cutt\alpha}}
\def\stretch{{\text{\sl Str}}}
\def\stretchd{{\text{\sl Str}^*}}
\def\Tex{{\text{\sl Tex}}}
\def\ee{{\mathcal E}}
\def\cc{{\mathcal C}}
\def\dd{{\mathcal D}}
\def\floor#1{{\lfloor{#1}\rfloor}}
\def\ceil#1{{\lceil{#1}\rceil}}
\def\real{{\mathbb{R}}}
\def\crg{\mathop{\text{\sl cr}}}
\def\mcr{\mathop{\text{\sl mcr}}}
\def\floor#1{{\lfloor{#1}\rfloor}}
\def\ceil#1{{\lceil{#1}\rceil}}
\title{Toroidal Grid Minors and Stretch in Embedded Graphs%
\footnote{This draws upon and extends partial results presented at 
	ISAAC 2007~\cite{HS07} and SODA 2010~\cite{HC10}.}
}
\author{Markus Chimani\thanks{
Faculty of Mathematics/Computer Science, Osnabr\"uck University. Osnabr\"uck, Germany.}
\and
Petr Hlin\v{e}n\'y \thanks{
Faculty of Informatics, Masaryk University. Brno, Czech Republic. 
Supported by the Czech Science Foundation,
projects 14-03501S (until 2016) and 17-00837S.
}
             \and
             Gelasio Salazar\thanks{Instituto de Fisica,
Universidad Autonoma de San Luis Potosi. 
San Luis Potosi, Mexico. Supported by CONACYT Grant 106432.
}}
\date{\today}
\begin{document}

\maketitle

\begin{abstract}

We investigate the {\em toroidal expanse} of an embedded graph $G$,
that is, the size of the
largest toroidal grid contained in $G$ as a minor. 
In the course of this work we introduce a new
embedding density parameter, the {\em stretch} of an embedded graph~$G$,
and use it to bound the toroidal expanse from above and from below
within a constant factor depending only on the genus and the maximum degree.
We also show that these parameters are tightly related to the planar
{\em crossing number} of~$G$.
As a consequence of our bounds, we derive an efficient constant factor
approximation algorithm for the toroidal expanse and for the crossing number
of a surface-embedded graph with bounded maximum degree.
\end{abstract}

\bigskip

{\small
\noindent{\bf Keywords:} Graph embeddings, compact surfaces, face-width, edge-width,
toroidal grid, crossing number, stretch

\medskip
\noindent{\bf AMS 2010 Subject Classification:} 05C10, 05C62, 05C83,
05C85, 57M15, 68R10}
\bigskip

\maketitle

\pagebreak

\section{Introduction}\label{sec:intro}

In their development of the Graph Minors theory towards the proof of
Wagner's Conjecture~\cite{RoSeGMXX}, Robertson and Seymour
made extensive use of surface embeddings of graphs. Robertson and
Seymour introduced parameters that measure the density of an
embedding, and established results that are not only central to the Graph
Minors theory, but are also of independent interest. We recall that
the {\em face-width} $\fw(G)$ of a graph $G$ embedded in a surface $\Sigma$ is the
smallest $r$ such that $\Sigma$ contains a noncontractible closed
curve (a {\em loop}) that intersects $G$ in $r$ points.

\begin{theorem}[Robertson and Seymour~\cite{RoSeGMVII}]
\label{thm:fw-minor}
For any graph $H$ embedded on a surface $\Sigma$, there exists a
constant $c:=c_{\Sigma}(H)$ such that every graph $G$ that embeds in $\Sigma$
with face-width at least $c$ contains $H$ as a minor.
\end{theorem}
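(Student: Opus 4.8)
The plan is to reduce the statement to a single, ``universal'' choice of target minor for the surface $\Sigma$, and then to extract that target from large face-width using the structure of surface embeddings. Throughout I assume $\Sigma$ is not the sphere, so that noncontractible curves exist and $\fw(G)$ is a genuine parameter.

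\emph{Step 1 (reduction to a fixed mesh of $\Sigma$).}
It suffices to prove the statement when $H$ is replaced by a suitably fine ``mesh'' of $\Sigma$. Indeed, fix an embedding of $H$ in $\Sigma$; thicken each vertex of $H$ to a small closed disk and each edge to a thin band, and overlay a sufficiently fine quadrangulation $M_k$ of $\Sigma$ --- a $k\times k$ toroidal grid when $\Sigma$ is the torus, and in general the $1$-skeleton of a fine enough subdivision of a fixed polygonal tessellation of $\Sigma$. For $k=k(H)$ large enough one checks that $H$ is a minor of $M_k$, by routing the branch sets through the disks and bands. Hence, if we can find a constant $c_0=c_0(\Sigma,k)$ with the property that $\fw(G)\ge c_0$ forces $M_k$ as a minor of $G$, then $c(H):=c_0\bigl(\Sigma,k(H)\bigr)$ works, and from now on the target is the fixed mesh $M_k$.

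\emph{Step 2 (extracting a periodic, gridlike skeleton).}
Using $\fw(G)\ge c_0$, the next step is to produce inside $G$ a family of pairwise disjoint noncontractible cycles cutting $\Sigma$ into a disk (a ``cut system''), together with many parallel copies of each. That a graph of face-width $\ge r$ contains roughly $r/2$ pairwise disjoint, pairwise homotopic noncontractible cycles in any prescribed tight homotopy class is a standard uncrossing argument (Robertson--Seymour; cf.\ the $3$-path-condition). The delicate point is to carry this out simultaneously for all generators of the cut system, so that the several cycle families are pairwise disjoint and cross one another in a gridlike pattern; this is exactly where the surface machinery of GM~VII (representativity, uncrossing, disjoint paths on a surface) is used. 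Cutting $\Sigma$ open along one representative per homotopy class then turns this skeleton into a planar graph of large ``radial depth'' (many nested cycles around each face) drawn in a polygon whose boundary identifications are those of $\Sigma$.

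\emph{Step 3 (planar grid and reassembly).}
Inside the cut-open disk, large face-width of $G$ yields large face-width of the planar piece, which by the classical planar argument forces a large planar grid minor that meets the polygon boundary in the required periodic pattern. Because the cut cycles of each class were chosen homotopic, the grid pieces line up across the boundary identifications when $\Sigma$ is reglued, producing a minor model of the mesh $M_k$ in $G$ --- provided $c_0$ was chosen large enough in terms of $k$ and the genus of $\Sigma$. This gives the desired $c(H)$. The main obstacle is Step~2: forcing the cycle families for the different generators of the cut system to coexist, simultaneously disjoint and mutually transversal in a gridlike fashion, rather than merely existing one homotopy class at a time. The resulting bound on $c(H)$ is very far from optimal, but the statement only claims the existence of some constant.
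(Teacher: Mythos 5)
First, a point of context: the paper does not prove this statement at all --- it is quoted verbatim from Robertson--Seymour (Graph Minors~VII) as background, so there is no in-paper proof to compare against. Judged on its own merits, your proposal is a reasonable sketch of the standard strategy (reduce $H$ to a fine mesh of $\Sigma$, extract many disjoint homotopic cycles in the classes of a cut system, cut open, find a planar grid, reglue), and Step~1 is fine. But the argument has a genuine gap exactly where you yourself locate ``the main obstacle'': Step~2 is not proved, it is deferred to ``the surface machinery of GM~VII (representativity, uncrossing, disjoint paths on a surface)'' --- which is circular, since the theorem you are asked to prove \emph{is} the GM~VII result; invoking that machinery as a black box is not a proof of it. Moreover, the ingredient you call standard is misstated: face-width at least $r$ does \emph{not} give $\approx r/2$ pairwise disjoint cycles ``in any prescribed tight homotopy class'' as a direct consequence of $\fw(G)\ge r$; the number of disjoint cycles homotopic to a fixed curve $\gamma$ is governed by the minimum number of intersections of curves homotopic to $\gamma$ with $G$ (a class-specific width), and relating that quantity to $\fw(G)$ for every generator of a cut system is itself a nontrivial step (this is the content of results such as Schrijver's torus theorem and of~\cite{BMR96}, which the paper cites precisely because they are not routine).

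Beyond that, the truly hard part --- making the several families for different generators pairwise disjoint within families and mutually transversal ``in a gridlike pattern,'' and then arguing in Step~3 that the planar pieces obtained after cutting line up coherently under the boundary identifications --- is asserted rather than argued; note also that ``face-width of the planar piece'' is not a meaningful quantity (face-width is defined via noncontractible curves), so the mechanism producing the planar grid inside the cut-open disk needs to be formulated in terms of the disjoint paths or nested cycles you actually carried through the cutting. As it stands, the proposal is an outline of the known proof architecture with its central lemma left unproved, not a self-contained proof.
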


This theorem, and other related results, 
spurred great interest in understanding which structures are forced by imposing 
density conditions on graph embeddings. 
For instance, Thomassen~\cite{Th94} and Yu~\cite{Yu97} proved the
existence of spanning trees with bounded degree for graphs embedded
with large enough face-width. In the same paper, Yu showed that under
strong enough connectivity conditions, $G$ is Hamiltonian if $G$ is a triangulation.

Large enough density, in the form of edge-width, also guarantees several nice coloring
properties. We recall that the {\em edge-width} $\ew(G)$ of an embedded graph
$G$ is the length of a shortest
noncontractible cycle in $G$. Fisk and Mohar~\cite{FM94} proved that there is a
universal constant $c$ such that every graph $G$ embedded in a surface
of Euler genus $g >0$ with edge-width at least $c\log{g}$ is
$6$-colorable. Thomassen~\cite{Th93} proved that larger (namely
$2^{14g+6}$) edge-width guarantees $5$-colorability. More recently, DeVos,
Kawarabayashi, and Mohar~\cite{DKM08} proved that large enough edge-width
actually guarantees $5$-choosability. 

In a direction closer to our current interest, 
Fiedler et al.~\cite{FHRR95} proved that if $G$ is embedded with
face-width $r$, then it has $\floor{r/2}$
pairwise disjoint contractible cycles, all bounding discs containing a
particular face. 
Brunet, Mohar, and Richter~\cite{BMR96} showed that such a $G$ contains at least
$\floor{(r-1)/2}$ pairwise disjoint, pairwise homotopic,
non-separating (in $\Sigma$) cycles, 
and at least $\floor{(r-1)/8} -1$ pairwise disjoint,
pairwise homotopic, separating, noncontractible cycles. 
We remark that throughout this paper, ``homotopic'' refers to
``freely homotopic'' (that is, not to ``fixed point
homotopic'').

For the particular case in which the host surface is the torus,
Schrijver~\cite{Sc93} 
unveiled a beautiful connection with the geometry
of numbers and proved that $G$ has at least $\floor{3r/4}$ pairwise
disjoint noncontractible cycles, and proved that the factor $3/4$ is
best possible. 

\begin{figure}[tb]
\centering
\includegraphics{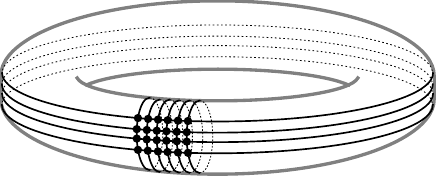}\qquad\qquad\qquad
\includegraphics{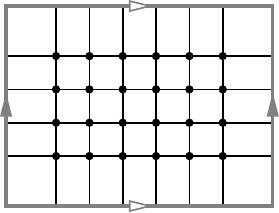}
\caption{Two visualizations of a natural toroidal embedding of the $4\times 6$ toroidal grid.
	On the right, the top and bottom edge of the rectangular frame 
	are identified, and same with the left and right edges.}
\label{fig:torgrid}
\end{figure}

The {\em toroidal $p \times q\>$-grid} is the 
Cartesian product $C_p\Box C_q$ of the cycles of sizes $p$ and $q$.
See Figure~\ref{fig:torgrid}.
Using results and techniques from~\cite{Sc93}, de Graaf and
Schrijver~\cite{dS94} showed the following:
\begin{theorem}[de Graaf and Schrijver \cite{dS94}]
\label{thm:deGS}
Let $G$ be a graph embedded in the torus with face-width $\fw(G)=r\ge 5$.
Then $G$ contains the toroidal $\floor{2r/3} \times \floor{2r/3}\>$-grid as a minor.
\end{theorem}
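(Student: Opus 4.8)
Theorem~\ref{thm:fw-minor}, applied with $H$ a toroidal $k\times k$-grid, already shows that \emph{some} toroidal grid minor is forced by large face-width, but only with a constant far from the sharp value $2/3$; so the plan is to argue directly, inside Schrijver's geometry-of-numbers framework for toroidal embeddings~\cite{Sc93}. Realise the torus as $\real^2/\Lambda$ for the rank-two deck lattice $\Lambda$ of the universal cover, and lift $G$ to the $\Lambda$-periodic plane graph $\widetilde G$. Free homotopy classes of noncontractible simple closed curves correspond, up to sign, to primitive vectors of $\Lambda$; assigning to primitive $v$ the least number $\|v\|$ of intersection points with $G$ of a simple closed curve of class $v$, an uncrossing argument (Schrijver) shows that $\|\cdot\|$ extends to a norm on $\real^2$, and the hypothesis $\fw(G)=r$ says precisely that $\|w\|\ge r$ for every nonzero $w\in\Lambda$.

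For primitive $v\in\Lambda$ let $\nu(v)$ be the maximum number of pairwise disjoint cycles of $G$ freely homotopic to $v$. The first ingredient is a min--max description of $\nu(v)$: cutting the torus along a class-$v$ curve that meets $G$ in $\|v\|$ points turns $G$ into a graph in a cylinder, in which the disjoint $v$-cycles become pairwise disjoint cycles each separating the two boundary circles; by a Menger/flow duality (in its natural fractional form) the maximum number of these equals the minimum weight of an arc joining the two boundary circles and meeting every such cycle, and on the torus such an arc closes up to a curve with intersection number one with $v$. This is Schrijver's duality, and it is what carries the numerical constant: the framework guarantees only about $3r/4$ pairwise disjoint noncontractible cycles in total, and once one insists on \emph{two transverse families}, the size guaranteed for each drops to $\floor{2r/3}$.

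The core of the proof is then a geometry-of-numbers lemma: using only $\|w\|\ge r$ for all nonzero $w\in\Lambda$, produce a basis $\{a,b\}$ of $\Lambda$ that is reduced for the norm $\|\cdot\|$ --- so that $\|a\|,\|b\|$ realise the two successive minima and each of $a,b$ is shortest in its residue class modulo the other --- and deduce from this reducedness, via the duality of the previous step, that $\nu(a)\ge\floor{2r/3}$ and $\nu(b)\ge\floor{2r/3}$ \emph{simultaneously}, equality being possible only for the hexagonal lattice/norm pair. I expect this simultaneous estimate --- balancing the width bound in the two conjugate directions at once, and showing that the extremal configuration is exactly the hexagonal one that yields $2/3$ --- to be the main obstacle. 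The hypothesis $r\ge 5$ enters here, to keep the floor functions honest and to rule out a handful of small degenerate lattices.

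It remains to assemble the grid. Choose a family $\mathcal A$ of $p:=\floor{2r/3}$ pairwise disjoint cycles of class $a$ and a family $\mathcal B$ of $q:=\floor{2r/3}$ pairwise disjoint cycles of class $b$, each family taken as short as possible in its homotopy class. Since $a\cdot b=\pm1$, every member of $\mathcal A$ meets every member of $\mathcal B$, and minimality lets one uncross the families (the only delicate point of the assembly, since in $G$ the cycles meet at shared vertices rather than transversally) so that each pair $A_i,B_j$ meets in exactly one point. Then the cyclic order in which the cycles of $\mathcal B$ occur along $A_i$ is the same for every $i$, and symmetrically; contracting the arc of each $A_i$ between consecutive cycles of $\mathcal B$, and the arc of each $B_j$ between consecutive cycles of $\mathcal A$, to single edges displays $C_{\floor{2r/3}}\Box C_{\floor{2r/3}}$ as a minor of $G$, as required.
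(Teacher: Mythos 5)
First, note that the paper does not prove this statement at all: Theorem~\ref{thm:deGS} is quoted from de~Graaf and Schrijver~\cite{dS94} and used as a black box (e.g.\ in the proof of Theorem~\ref{thm:agrid-torus}), so there is no internal proof to compare against. Your sketch is, in outline, the route of~\cite{dS94} and~\cite{Sc93}: lift to $\real^2/\Lambda$, encode face-width as a norm bound $\|w\|\ge r$ on nonzero $w\in\Lambda$, find two transverse homotopy classes each supporting many disjoint cycles, and assemble the grid. The assembly step you gloss at the end is essentially the paper's Theorem~\ref{thm:two-cycle-families}, whose proof in Section~4 shows that ``uncross so each pair meets exactly once and the cyclic orders agree'' is not a one-line minimality argument (one must kill zigzag ears and multiple windings via the quasicycle machinery); that part could at least be delegated to the paper.

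The genuine gap is the quantitative heart of the theorem, and you flag it yourself without closing it. Two problems. (i) Your min--max step is misstated: the maximum number $\nu(v)$ of disjoint cycles homotopic to $v$ is not obtained by a plain Menger argument against arcs whose closures have intersection number one with $v$; a class-$z$ curve meets each $v$-cycle at least $|\det(v,z)|$ times, so the correct upper bound is $\nu(v)\le\min_z \|z\|/|\det(v,z)|$ over \emph{all} classes $z$, and the matching lower bound is Schrijver's homotopic-circuits theorem, a substantial result rather than cylinder flow duality. (If the dual minimum really were only over $z$ with $|\det(v,z)|=1$, the hypothesis $\|z\|\ge r$ would immediately give $\nu(v)\ge r$ in every class, contradicting the hexagonal example that makes $2/3$ sharp --- a sign the duality as you state it cannot be right.) A Menger-plus-planarity argument in the style of Claim~\ref{cl:ellcycles} recovers only the easy direction you actually need, but then the bound it gives is governed by the full minimum above, not by $r$. (ii) Consequently the central lemma --- that a norm-reduced basis $\{a,b\}$ satisfies $\nu(a)\ge\floor{2r/3}$ and $\nu(b)\ge\floor{2r/3}$ \emph{simultaneously}, with the hexagonal norm as the extremal case --- is exactly where the constant $2/3$ lives, and your proposal offers no argument for it beyond the expectation that it is ``the main obstacle.'' Since that balancing estimate in the two conjugate directions is the entire content of de~Graaf--Schrijver beyond Schrijver's framework, the proposal as it stands is an outline of their proof with its decisive step missing, not a proof.
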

De Graaf and Schrijver also proved that $\floor{2r/3}$ is best possible, by exhibiting
(for each $r\ge 3$) a graph that embeds in the torus with face-width $r$ and that 
does not contain a toroidal \mbox{$(\floor{2r/3}+1) \times (\floor{2r/3}+1)\>$}-grid as
a minor. 
As they observe, their result shows that $c=\ceil{3m/2}$ is the smallest
value that applies in (Robertson-Seymour's)
Theorem~\ref{thm:fw-minor} for the case of $H=C_m\Box C_m$.

\paragraph*{%Our focus: t
		Toroidal expanse, stretch, and crossing number. }

Along the lines of the aforementioned de Graaf-Schrijver
result, our aim is to investigate the largest 
size (meaning the number of vertices) of a toroidal grid minor 
contained in a graph $G$ embedded in an arbitrary orientable surface of
genus greater than zero.
We do not restrict ourselves to square proportions of the grid and
define this parameter as follows.

\begin{definition}[Toroidal expanse]
\label{def:texpanse}
The {\em toroidal expanse} of a graph $G$, denoted by $\Tex(G)$, is the largest value of $p\cdot q$
over all integers $p,q\geq3$ such that $G$ contains a toroidal $p \times q\>$-grid
 as a minor.
If $G$ does not contain $C_3\Box C_3$ as a minor, then let $\Tex(G)=0$.
\end{definition}\noindent
Our interest is both in the structural and the algorithmic aspects of the toroidal expanse.

The ``bound of nontriviality'' $p,q\geq3$ required by
Definition~\ref{def:texpanse} is natural in the view of toroidal
embeddability ---the degenerate cases $C_2\Box C_q$ are planar, while $C_p\Box C_q$
has orientable genus one for all $p,q\geq3$.
It is not difficult to combine results from~\cite{BMR96}
and~\cite{dS94} to show that for each positive integer $g>0$ there is
a constant $c:=c(g)$ with the following property: if $G$ embeds in the
orientable surface $\Sigma$ of genus $g$ with face-width $r$, then
$G$ contains a toroidal $(c\cdot r) \times (c\cdot r)$-grid as a minor; that is,
$\Tex(G) = \Omega(r^2)$.  

On the other hand, it is very easy to come up with a sequence of graphs $G$ embedded
in a fixed surface with face-width $r$ and arbitrarily large $\Tex(G)/ r^2$:
it is achieved by a natural toroidal embedding of $C_r \Box C_{q}$ for arbitrarily
large $q$.  This inadequacy of face-width to estimate the
toroidal expanse of an embedded graph is to be expected, due to the
one-dimensional character of this parameter. 
To this end, we define a new density parameter of embedded graphs
that captures the truly two-dimensional character of our problem;
the {\em stretch of an embedded graph} in Definition~\ref{def:stretch}.
In short, the notion of stretch is related to that of edge-width,
and the stretch equals the smallest product of lengths of two cycles that
transversely meet once on the surface.
The notion of stretch first appeared in the conference paper~\cite{HC10} 
and has also been studied from an algorithmic point of view in~\cite{CCH13}.

Using stretch as a core tool, 
we unveil our main result---a tight two-way relationship between
the toroidal expanse of a graph $G$ in an orientable surface 
and its {\em crossing number} $\crg(G)$ in the plane,
under an assumption of a sufficiently dense embedding.
We furthermore provide an approximation algorithm for both these numbers.
Our treatment of the new concepts of stretch and toroidal expanse
in the paper is completely self-contained.

A simplified summary of the main results follows.

\begin{thm}[Main Theorem]
\label{thm:main-overview}
Let $\Sigma$ be an orientable surface of fixed genus $g>0$,
and let $\Delta$ be an integer.
There exist constants $r_0,c_0,c_1,c_2>0$, depending only on $g$ and $\Delta$,
such that the following two claims hold
for any graph $G$ of maximum degree $\Delta$ embedded in $\Sigma$:
\begin{itemize}
\item[(a)] If $G$ is embedded in $\Sigma$ with face-width at least $r_0$,
then $c_0\cdot\crg(G) \leq \Tex(G) \leq c_1\cdot\crg(G)$.
\item[(b)] There is a polynomial time algorithm that outputs a drawing of
$G$ in the plane with at most $c_2\cdot\crg(G)$ crossings.
\end{itemize}
\end{thm}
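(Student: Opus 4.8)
The strategy is to funnel everything through the new parameter $\stretch(G)$, proving separately that it is sandwiched by the toroidal expanse and by the crossing number (each within constant factors depending on $g$ and $\Delta$), and that it is algorithmically tractable. Concretely, I would establish four ingredients: (i) $\stretch(G) = \Theta(\Tex(G))$ for an embedded $G$ of bounded degree and large enough face-width; (ii) $\stretch(G) = \Theta(\crg(G))$ under the same hypotheses; (iii) a drawing of $G$ in the plane with $O(\stretch(G))$ crossings can be produced in polynomial time; and (iv) $\stretch(G)$ itself can be approximated (or computed) in polynomial time for fixed $g$. Combining (i) and (ii) gives part (a), and combining (iv) with (iii) gives part (b), since the number of crossings in the output drawing is $O(\stretch(G)) = O(\crg(G))$.

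For the lower bound $\Tex(G) = \Omega(\crg(G))$ — equivalently, an upper bound on $\crg(G)$ — the plan is constructive: use a short noncontractible system of curves realizing the stretch to ``cut'' the surface open into a planar piece, and then redraw the cut-apart pieces in the plane, paying one crossing per pair of edge-segments forced to cross along the cuts. The point of defining stretch two-dimensionally (rather than via face-width) is exactly that this cutting cost is quadratic in a one-dimensional width but linear in stretch, so the bookkeeping closes up. The degree bound $\Delta$ enters here to control how many edges pass through each vertex of the cutting curves. For the reverse direction $\Tex(G) = \Omega(\crg(G))$'s companion inequality $\crg(G) = \Omega(\Tex(G))$, one uses that a toroidal $p\times q$-grid minor is a dense-enough obstruction to planarity: a $C_p \Box C_q$ subgraph (or minor) has crossing number $\Omega(pq)$ by a standard bisection-width / embedding-method argument, and crossing number does not increase under taking minors in the relevant controlled sense once degrees are bounded, so $\crg(G) \geq c\cdot pq$ for the largest grid minor. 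The equivalence with stretch on the grid side comes from the de Graaf–Schrijver-type analysis (Theorem~\ref{thm:deGS} and its higher-genus analogues obtainable by combining \cite{BMR96} and \cite{dS94}), pushed through for non-square grids: a dense embedding with stretch $s$ contains a toroidal grid with $\Theta(s)$ vertices, and conversely such a grid minor forces stretch $\Omega(s)$.

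I expect the main obstacle to be ingredient (i) in the direction ``large stretch forces a large toroidal grid minor'' with the correct (non-square, genus $g>1$) quantitative dependence. The de Graaf–Schrijver result is torus-specific and square-shaped; extending it to an arbitrary orientable surface requires first localizing a toroidal ``handle'' of the embedding along which the stretch is witnessed, controlling the homotopy types of the two families of disjoint curves that will become the two grid directions, and then ensuring the two families cross each other in a grid-like pattern rather than degenerating. Getting the constants to depend only on $g$ and $\Delta$ — and in particular getting a genuinely two-dimensional $\Theta(s)$ rather than a lossy $\Theta(\sqrt{s}\,)$ from a naive face-width reduction — is where the real work lies; this is presumably why the paper introduces stretch as a first-class object rather than arguing directly between face-width, grids, and crossings. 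The algorithmic claims (iii)–(iv) should then follow by making the structural arguments effective: the cutting curves and the grid minor model can be found by shortest-noncontractible-curve computations (polynomial for fixed genus), and the redrawing is a direct translation of the combinatorial cut-and-paste into an explicit plane drawing.
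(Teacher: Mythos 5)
Your plan has a genuine gap at its core: ingredients (i) and (ii) --- that $\stretchd(G)=\Theta(\Tex(G))$ and $\stretchd(G)=\Theta(\crg(G))$ for bounded-degree graphs embedded with large face-width --- are simply false once the genus is at least $2$, even under the density hypothesis. Stretch is a \emph{global minimum} over one-leaping pairs, so it can be witnessed entirely inside one ``thin'' handle whose dual edge-width just meets the threshold $r_0$, while an enormous toroidal grid (hence enormous $\Tex(G)$ and $\crg(G)$) lives in a different handle; then $\stretchd(G)$ stays bounded by a constant depending only on $r_0$ while $\crg(G)\to\infty$. This is exactly the obstruction the paper points out when it remarks that a graph in the double torus can have a huge grid in one handle and tiny dual stretch from the other, and again when it states that the hoped-for bridge $\stretchd(G)=\Omega\bigl(\max_i k_i\ell_i\bigr)$ ``is not true in general.'' Consequently one cannot funnel both bounds of part (a) through the stretch of $G$ itself. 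The paper's actual pivot quantity is $\max_i\{k_i\ell_i\}$ taken over a \emph{good planarizing sequence} of the dual (iterated cuts along shortest nonseparating dual cycles): Theorem~\ref{thm:upper-cr} gives $\crg(G)\le 3(2^{g+1}-2-g)\max_i\{k_i\ell_i\}$ together with the algorithm for (b), while the matching lower bound $\Tex(G)=\Omega\bigl(\max_i\{k_i\ell_i\}\bigr)$ (Corollary~\ref{cor:kl-to-stretch}) is obtained \emph{not} from the stretch of $G$ but from Lemma~\ref{lem:kl-to-stretch}: a surgery argument (bipolar dual subgraphs, polarity-switching ears) that cuts away the handles responsible for small stretch and produces a \emph{subgraph} $H'\subseteq G$ of smaller genus with $\stretchd(H')\ge 2^{2g'-2g}k\ell$, to which the torus-type grid extraction (Theorems~\ref{thm:two-cycle-families}, \ref{thm:agrid-torus}, Corollary~\ref{cor:agrid-all}) is then applied. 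Your route does work essentially as stated on the torus (there Lemma~\ref{lem:cr-stretch-torus} and Corollary~\ref{cor:agrid-torus} give exactly your (i)--(iii)), but for $g\ge2$ the issue is not that the extension is hard, it is that your stated equivalences fail.

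A second, related problem is your constructive upper bound: you propose to cut the surface open along curves ``realizing the stretch'' and charge the redrawing cost linearly to stretch. The paper explicitly discusses why this fails for the iterated (higher-genus) cutting: the redrawing analysis needs the successive cut sizes not to shrink too fast (the inequality $k_i\le 2^{j-i}k_j$ from Lemma~\ref{lem:dew2} is essential in bounding the rerouting cost of earlier bunches through later cuts), and cutting along a cycle of a stretch-witnessing pair gives no such control --- the number of severed edges ``may jump up or down arbitrarily.'' So the cuts must be along shortest nonseparating dual cycles, and the price is the exponential-in-$g$ factor, not a clean $O(\stretchd(G))$. Finally, note that the algorithm never needs to compute or approximate stretch (your ingredient (iv)); it only needs shortest nonseparating dual cycles (Kutz's algorithm) and planar dual shortest paths between the tracked face pairs, with minor-monotonicity plus Theorem~\ref{thm:crossing-CpCq} (i.e.\ Corollary~\ref{cor:crossing-texp}) supplying the easy inequality $\Tex(G)\le 12\crg(G)$, which part of your sketch does capture correctly.
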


The density assumption that $\fw(G)\geq r_0$ is unavoidable for (a). Indeed,
consider a very large planar grid plus an edge. Such a graph
clearly admits a toroidal embedding with face-width~$1$. 
By suitably placing the additional edge, such a graph would
have arbitrarily large crossing number, and yet no $C_3\Box C_3$ minor.
However, one could weaken this restriction a bit by considering ``nonseparating''
face-width instead, as we are going to do in the proof.
On the other hand, an embedding density assumption such as in (a) can be
completely avoided 
for the algorithm in (b) by using additional results of~\cite{CH17}.

Regarding the constants $r_0,c_0,c_1,c_2$ we note that, in our proofs,
\begin{itemize}\parskip0pt
\item $r_0$ is exponential in $g$ (of order $2^g$) and linear in $\Delta$,
\item $1/c_0$ is quadratic in $\Delta$ and exponential in $g$ (of order $8^g$),
\item $c_1$ is independent of $g,\Delta$, and
\item $c_2$ is quartic in $\Delta$ and exponential in $g$ (of order $16^g$).
\end{itemize}
Moreover, the estimate of $c_2$ can be improved to asymptotically match
$1/c_0$ if the density assumption of (a) is fulfilled also in (b).

The rest of this paper is structured as follows. In
Section~\ref{sec:prelims} we present some basic terminology and
results on graph drawings and embeddings, and introduce the key
concept of stretch of an embedded graph. In
Section~\ref{sec:mainresults} we give a commentated walkthrough on the
lemmas and theorems leading to the proof of Theorem~\ref{thm:main-overview}.
The exact values of the constants $r_0,c_0,c_1$ are given there as well.
Some of the presented statements seem to be of
independent interest, and their (often long and technical) proofs are deferred 
to Sections \ref{sec:drawing-upper}\,--\,\ref{sec:finding} of the paper. 
Section~\ref{sec:nodensity} then finishes the algorithmic task
of Theorem~\ref{thm:main-overview}(b) by using
\cite{CH17} to circumvent the density assumption which was crucial in
the previous sections, and gives the value of~$c_2$.
Final Section~\ref{sec:concluding} then outlines some possible extensions
of the main theorem and directions for future research.

\section{Preliminaries}\label{sec:prelims}

We follow standard terminology of topological graph theory, see
Mohar and Thomassen~\cite{MT01} and Stillwell~\cite{St93}.
We deal with undirected multigraphs by default;
so when speaking about a {\em graph}, we allow multiple edges and loops.
The vertex set of a graph $G$ is denoted by $V(G)$, the edge set by
$E(G)$, the number of vertices of $G$ (the {\em size}) by $|G|$,
and the maximum degree by $\Delta(G)$.

In this section we lay out several concepts and basic results relevant to this work,
and introduce the key concept of stretch of an embedded graph.

\subsection{Graph drawings and embeddings in surfaces}
\label{sub:gdes}

We recall that in a {\em drawing} of a graph $G$ in a surface $\Sigma$,
vertices are mapped to distinct points and edges are mapped to continuous curves
(arcs) such that the endpoints of an arc are the vertices of the
corresponding edge; no arc contains a point that
represents a non-incident vertex. For simplicity, we often make no
distinction between the topological objects of a drawing (points and
arcs) and their corresponding graph theoretical objects (vertices and
edges). A {\em crossing} in a drawing is an
intersection point of two edges (or a self-intersection of one edge) 
in a point other than a common endvertex. 
An {\em embedding} of a graph in a surface is a drawing with no edge crossings.

Throughout this paper, we exclusively focus on
orientable surfaces; for each $g\ge 0$ we let $\Sigma_g$ denote
the {\em orientable surface of genus $g$}. 

If we regard an embedded graph $G$ as a subset of its host surface $\Sigma$, 
then the connected components of $\Sigma \setminus G$ are the {\em
faces} of the embedding. 
For clarity, we always assume that our embeddings are {\em cellular},
which means that every face is homeomorphic to an open disc.
For a face $a$ of~$G$, the vertices and edges incident to $a$ form a walk in
the graph $G$, which we call the {\em facial walk} of~$a$.
It is folklore that under the assumption of a cellular embedding 
(and with a restriction to orientable surfaces), 
the set of facial walks of an embedded graph $G$ is fully determined by the
{\em rotation scheme} of $G$, which is the set of cyclic permutations of
edges of $G$ around the vertices of~$G$.

We recall that the vertices of the {\em
  topological dual} $G^*$ of $G$ are the faces of~$G$, and its edges
are the edge-adjacent pairs of faces of~$G$. 
There is a natural one-to-one correspondence between the edges of $G$
and the edges of $G^*$, and so, for an arbitrary $F\subseteq E(G)$,
we denote by $F^*$ the corresponding subset of edges of~$E(G^*)$. We
often use lower case Greek letters (such as $\alpha, \beta, \gamma$)
to denote dual cycles. The rationale behind this practice is the
convenience to regard a dual
cycle as a simple closed curve, often paying no attention to its
graph-theoretical properties.

Let $G$ be a graph embedded in the surface $\Sigma_g$, 
and let $C$ be a surface-non\-separating cycle of $G$. 
We denote by $G\cutt C$ the graph obtained by {\em cutting $G$ through $C$} as
follows. Let $F$ denote the set of edges not in $C$ that are incident
with a vertex in $C$. Orient $C$ arbitrarily, so that $F$ gets
naturally partitioned into the set $L$ of edges to the left of $C$ and
the set $R$ of edges to the right of $C$. 
More formally, $L$ and $R$ should be viewed as sets of half-edges,
since the same one edge of $F$ may have one of its half-edges in $L$ 
and the other in~$R$, but this does not constitute a real problem.
Now contract (topologically) the whole curve representing $C$ to a point-vertex $v$, 
to obtain a pinched surface, and then naturally split $v$ into two
vertices, one incident with the edges in $L$ and another incident with
the edges in $R$.
The resulting graph $G\cutt C$ is thus embedded on a surface $\Sigma'$ such
that $\Sigma$ results from $\Sigma'$ by adding one handle. 
Clearly $E(G\cutt C)=E(G)\sem E(C)$, and so for every subgraph
$F\subseteq G\cutt C$ there
is a unique naturally corresponding subgraph $\hat F\subseteq G$
where $\hat F$ is induced by the edge set $E(\hat F)=E(F)$.
We call $\hat F$ the {\em lift of $F$ into~$G$}.

The ``cutting through'' operation is a form of a standard surface surgery 
in topological graph theory,
and we shall be using it in the dual form too, as follows.
Let $G$ be a graph embedded in a surface $\Sigma$
and $\gamma\subseteq G^*$ a dual cycle
such that $\gamma$ is $\Sigma$-nonseparating. Now cut
the surface along $\gamma$, discarding the set $E'$ of edges of $G$ that are
severed in the process. This yields an embedding of $G-E$ in a surface
with two holes. Then paste two discs, one along the boundary of each
hole, to get back to a compact surface. We denote the resulting
embedding by $G\cutt\gamma$, and say that this is obtained by {\em
  cutting $G$ along $\gamma$}. Note that we may equivalently define
$G\cutt\gamma$ as the embedded graph $(G^*\cutt\gamma)^*$,
that is, $(G\cutt\gamma)^*=G^*\cutt\gamma$. 
Note also that $G\cutt\gamma$ is a spanning subgraph of $G$, 
and that the previous definition of a {\em lift} applies also to this case.

\subsection{Graph crossing number}

We further look at drawings of graphs (in the plane) that allow edge crossings.
To resolve ambiguity, we only consider drawings where
no three edges intersect in a common point other than a vertex. 
The {\em crossing number} $\crg(G)$ of a graph $G$ is then
the minimum number of edge crossings in a drawing of $G$ in the plane.

For the general lower bounds we shall derive on the crossing number of
graphs
we use the following results
on the crossing number of toroidal grids (see~\cites{BeR,JS01,KR,RBe}).

\begin{theorem}
\label{thm:crossing-CpCq}
For all nonnegative integers $p$ and $q$, $\crg(C_p\Box C_q)\geq
\frac12(p-2)q$. Moreover, 
$\crg(C_p\Box C_q) = (p-2)q$ for $p=3,4,5$.
\end{theorem}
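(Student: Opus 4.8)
The plan is to prove the general lower bound $\crg(C_p\Box C_q)\ge\frac12(p-2)q$ by a counting argument over the $q$ ``columns'' of the grid, and then to quote the known exact values for small $p$ from the literature. Write $C_p\Box C_q$ with vertex set $\ZZ_p\times\ZZ_q$, and think of the second coordinate as indexing $q$ disjoint copies $R_0,\dots,R_{q-1}$ of the cycle $C_p$ (the ``rows''), joined by $q$ perfect matchings $M_0,\dots,M_{q-1}$ (the ``rungs'', where $M_j$ joins $R_j$ to $R_{j+1}$, indices mod $q$). Fix an optimal drawing $D$ in the plane. The first step is to bound, for each $j$, the number of crossings on the subdrawing induced by $R_j\cup M_j\cup R_{j+1}$, i.e.\ on a single ``cylinder'' $C_p\Box P_2$.

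The key local lemma I would isolate is: in \emph{any} drawing of $C_p\Box P_2$ in the plane, at least $p-2$ crossings occur among its edges, \emph{provided} we also count crossings of these edges with the rest of the drawing appropriately; more precisely, I would argue that each row cycle $R_j$ for $1\le j$ cannot be drawn ``for free'' relative to its two neighbouring rungs plus row. The cleanest route is the classical one: a planar drawing of $C_p\Box C_q$ restricted to two consecutive rows and the matching between them, if crossing-free, forces $R_j$ and $R_{j+1}$ to be nested cycles in the plane with the matching $M_j$ drawn in the annulus between them; iterating this around all $q$ rows yields a planar embedding of $C_p\Box C_q$, contradicting that $C_p\Box C_q$ has genus $1$ for $p,q\ge 3$. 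Quantitatively, one shows via an Euler-characteristic / discharging estimate that deleting all crossed edges and all edges incident to crossings still must leave a planar subgraph, and one tracks how many rungs/row-edges had to be removed; summing the deficiency over the $q$ cyclic positions gives the bound $\frac12(p-2)q$, the factor $\frac12$ coming from the fact that each crossing is counted in at most two consecutive cylinders. I would present this as: let $x_j$ be the number of crossings in $D$ with at least one edge in $R_j\cup M_j\cup R_{j+1}$; show $\sum_j x_j\ge (p-2)q$ by the nesting/genus obstruction applied cyclically, and observe $\sum_j x_j\le 2\crg(D)$ since a crossing between edges in positions involving rows $a$ and $b$ contributes to at most the two indices $j\in\{a-1,a\}\cap\{b-1,b\}$-type set, hence to at most two terms.

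The main obstacle, and the part that needs the most care, is making the local estimate ``each layer costs $p-2$'' rigorous without hand-waving about planar curves: one must handle the case where crossings are distributed unevenly, where a single edge is crossed many times, and where the nesting structure of the rows in the plane is irregular. The standard fix is to contract each row cycle $R_j$ to a single vertex $v_j$ after noting that if $R_j$ is crossing-free it bounds a disc, and to analyze the resulting multigraph $C_q$ with high-multiplicity rungs; but since rows \emph{will} generally be crossed, the honest argument routes through the observation that $C_p\Box C_q$ minus fewer than $(p-2)q$ edges still contains a $C_3\Box C_3$-subdivision (hence is nonplanar) whenever the deleted edges are ``spread'' as above --- this is really a small Euler-genus/edge-count computation, $|E|-|V| = pq$ while a planar graph on $pq$ vertices has at most $3pq-6$ edges, refined by the girth-$4$ bound $2pq-4$, combined with the structure of which edges survive. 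I would complete the argument by choosing, in the optimal drawing, for each $j$ a crossing-free ``essential'' row and pushing the counting through, then remark that for $p\in\{3,4,5\}$ the matching upper bound $(p-2)q$ is realized by the obvious cylindrical drawing (draw $p$ concentric rows, route the $q$-th matching $M_{q-1}$ across, incurring $(p-2)$ crossings per ``wrap-around'' rung-set), and cite \cites{BeR,JS01,KR,RBe} for the fact that this is optimal, which settles the ``moreover'' clause.
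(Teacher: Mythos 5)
There is a genuine gap, and it is worth noting first that the paper does not prove this statement at all: Theorem~\ref{thm:crossing-CpCq} is quoted as a known result, with the lower bound and the exact values for $p=3,4,5$ taken from the cited literature \cites{BeR,JS01,KR,RBe}. Your ``moreover'' clause is handled the same way (citation plus the standard cylindrical drawing for the upper bound), which is fine. But your plan for the general bound $\crg(C_p\Box C_q)\geq\frac12(p-2)q$ does not go through as sketched. The entire weight rests on the local claim that every cylinder $R_j\cup M_j\cup R_{j+1}$ is involved in at least $p-2$ crossings, and nothing in the proposal establishes it. The nesting/genus observation only shows that not \emph{all} cylinders can be crossing-free; it says nothing about how many crossings any single layer must carry, and an optimal (or adversarial) drawing may concentrate its crossings very unevenly among the layers. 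The fallback you suggest, an Euler-characteristic edge count, cannot rescue this: $C_p\Box C_q$ has $pq$ vertices and $2pq$ edges, and the girth-$4$ planarity bound $|E|\leq 2|V|-4$ yields only $\crg(C_p\Box C_q)\geq 4$, an additive constant independent of $p$ and $q$, so no refinement of that computation alone can produce a bound growing like $(p-2)q$. Establishing a per-layer (or averaged per-layer) crossing count is precisely the hard content of the cited papers, which need delicate case analyses about ``clean'' cycles, how the row cycles separate the plane, and what happens when cycles of one family cross each other; it is not a consequence of the two observations you invoke.

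There is also a quantitative error in the double counting. With $x_j$ defined as the number of crossings having at least one edge in $R_j\cup M_j\cup R_{j+1}$, a crossing between an edge of $R_a$ and an edge of $R_b$ with $b\notin\{a-1,a,a+1\}$ is counted in the four terms $x_{a-1},x_a,x_{b-1},x_b$, so the correct inequality is $\sum_j x_j\leq 4\crg(D)$, not $2\crg(D)$; even if the local estimate $x_j\geq p-2$ were proved, you would only obtain $\frac14(p-2)q$. If you instead count only crossings with both edges inside the cylinder, the factor $2$ is restored, but the local estimate becomes even less plausible and is again unproven. The honest repair is the one the paper itself uses: cite the literature for both the general lower bound and the exact values, or else reproduce one of the published arguments in full rather than the nesting-plus-Euler sketch.
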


We note that this result already yields the easy part of
Theorem~\ref{thm:main-overview}\,(a):
\begin{corollary}
\label{cor:crossing-texp}
Let $G$ be a graph embedded on a surface. Then $\crg(G)\geq\frac1{12}\Tex(G)$.
\end{corollary}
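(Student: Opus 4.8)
The plan is to combine Theorem~\ref{thm:crossing-CpCq} with the definition of toroidal expanse in an essentially one-line argument. Suppose $\Tex(G)>0$; otherwise the inequality is trivial since $\crg(G)\ge 0$. By Definition~\ref{def:texpanse}, there are integers $p,q\ge 3$ with $\Tex(G)=p\cdot q$ such that the toroidal $p\times q$-grid $C_p\Box C_q$ is a minor of $G$.

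Since the crossing number is minor-monotone (deleting edges or vertices cannot increase the number of crossings, and contracting an edge can be realized in any drawing by sliding one endpoint along the edge to the other, again without creating new crossings), we have $\crg(G)\ge\crg(C_p\Box C_q)$. Applying the general bound of Theorem~\ref{thm:crossing-CpCq}, $\crg(C_p\Box C_q)\ge\frac12(p-2)q$.

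It remains to check that $\frac12(p-2)q\ge\frac1{12}pq$ whenever $p\ge 3$. This reduces to $6(p-2)\ge p$, i.e. $5p\ge 12$, which holds for all $p\ge 3$. Hence $\crg(G)\ge\frac12(p-2)q\ge\frac1{12}pq=\frac1{12}\Tex(G)$, as claimed.

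I expect no real obstacle here; the only point requiring a word of care is the minor-monotonicity of the crossing number, which is standard but worth stating explicitly, and the trivial arithmetic estimate $6(p-2)\ge p$ for $p\ge 3$ (the worst case being $p=3$, where $(p-2)/p = 1/3 \ge 1/6$). One could even get the slightly sharper constant $\frac16$ in the case $p=3$, but $\frac1{12}$ suffices uniformly and matches the statement.
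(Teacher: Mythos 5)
Your argument breaks down at the step ``$\crg(G)\ge\crg(C_p\Box C_q)$ by minor-monotonicity.'' The crossing number is \emph{not} minor-monotone, and the sliding argument you sketch is where the error hides: when you contract an edge $uv$ by sliding $u$ along the arc of $uv$ to $v$, every edge incident with $u$ is dragged along that arc, so each crossing that lay on the arc of $uv$ is multiplied into up to $\deg(u)-1$ crossings. Contraction is only free if the contracted edge happens to be crossing-free in the drawing, which you cannot assume. Indeed, the failure of minor-monotonicity is precisely the reason the \emph{minor crossing number} $\mcr$ exists as a separate parameter (as this paper notes, it is minor-monotone ``in contrast to the traditional crossing number''); e.g.\ $K_n$ is a minor of graphs whose crossing number is $O(n^2)$, far below $\crg(K_n)=\Theta(n^4)$.

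The paper's proof repairs exactly this point: since $\Delta(C_p\Box C_q)=4$, one may invoke the result of Garcia-Moreno and Salazar~\cite{GS01} that $\crg(G)\ge\frac14\crg(H)$ whenever $H$ is a minor of $G$ with $\Delta(H)=4$, paying a factor $\frac14$ for the contractions. With that loss your arithmetic no longer has the slack you observed: the bound $\frac14\cdot\frac12(p-2)q\ge\frac1{12}pq$ only holds for $p\ge6$, and for $p\in\{3,4,5\}$ one must use the exact value $\crg(C_p\Box C_q)=(p-2)q$ from Theorem~\ref{thm:crossing-CpCq} (this is why that ``moreover'' clause is stated), giving $\frac14(p-2)q\ge\frac1{12}pq$ for $p\ge3$. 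So the structure of your proof is salvageable, but you must replace the false monotonicity claim by the degree-dependent minor bound and redo the small-$p$ cases accordingly.
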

\begin{proof}
Let $q \ge p \ge 3$ be integers that witness $\Tex(G)$ (that is, $G$
contains $C_p\Box C_q$ as a minor, and $\Tex(G)=pq$). 
It is known~\cite{GS01} that if $G$ contains $H$ as a minor, and
$\Delta(H)=4$, then
$\crg(G)\geq\frac14\crg(H)$. 
We apply this bound with $H=C_p\Box C_q$. By
Theorem~\ref{thm:crossing-CpCq}, we then have for $p\in\{3,4,5\}$ that
$\crg(G)\geq\frac14(p-2)q\geq\frac1{12}pq$, and for 
$p\geq6$ we obtain
$\crg(G)\geq\frac14\cdot\frac12(p-2)q\geq\frac1{12}pq$.
\end{proof}

\subsection{Curves on surfaces and embedded cycles}

Note that in an embedded graph, paths are simple curves and
cycles are simple closed curves in the surface, and hence it makes good sense 
to speak about their homotopy.

If $B$ is a path or a cycle of a graph, 
then the {\em length} $\len(B)$ of $B$ is its number of edges.
We recall that the {\em edge-width} $\ew(G)$ of an embedded graph $G$ is the length
of a shortest noncontractible cycle in $G$. 
The {\em nonseparating edge-width} $\ewn(G)$ is the length of a shortest
nonseparating (and hence also noncontractible) cycle in $G$. 
It is trivial to see that the face-width $\fw(G)$ of $G$ equals one 
half of the edge-width of the vertex-face incidence graph of~$G$.
In this paper, we are primarily interested in graphs of bounded degree.
In such a case it is useful to regard $\ew(G^*)$ as a suitable
(easier to deal with) asymptotic replacement for $\fw(G)$:

\begin{lemma}\label{lem:ewdfw}
If $G$ is an embedded graph of maximum degree~$\Delta$, then
$\ew(G^*)\geq\fw(G)\geq\frac{\ew(G^*)}{\lfloor\Delta(G)/2\rfloor}$.
The same inequalities hold for nonseparating edge-width and face-width.
\end{lemma}
\begin{proof}
$\ew(G^*)\geq\fw(G)$ follows since any dual cycle $\alpha$ in $G^*$ makes a
loop intersecting $G$ in $\len(\alpha)$ points.
On the other hand, any loop $\lambda$ intersecting $G$ in $\fw(G)$
points can be locally modified to a homotopic loop $\lambda'$ which does not
contain vertices of $G$, at the cost of intersecting at most
$\lfloor\Delta(G)/2\rfloor$ new edges for every vertex of $G$ on~$\lambda$.
Since $\lambda'$ corresponds to a dual cycle in $G^*$, we conclude that
$\ew(G^*)\leq\fw(G)\cdot\lfloor\Delta(G)/2\rfloor$.
\end{proof}

For a cycle (or an arbitrary subgraph) 
$C$ in a graph $G$, we call a path $P\subset G$ a {\em$C$-ear} 
if the ends $r,s$ of $P$ belong to $C$, but the rest of $P$ is disjoint from $C$.
We allow $r=s$, i.e., a $C$-ear can also be a cycle.
A $C$-ear $P$ is a \mbox{\em$C$-switching ear} (with respect to an orientable
embedding of $G$) if the two edges of $P$ 
incident with the ends $r,s$ are embedded on opposite sides of~$C$.
The following simple technical claim is useful.

\begin{lemma}\label{lem:kl2}
If $C$ is a nonseparating cycle in an
embedded graph $G$ of length $\len(C)=\ewn(G)$,
then all $C$-switching ears in $G$ have length at least $\frac12\ewn(G)$.
\end{lemma}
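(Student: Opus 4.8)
The plan is to close the switching ear up with a short sub‑arc of $C$ into a cycle that is forced to be nonseparating, and then to invoke the minimality $\len(C)=\ewn(G)$.

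I would first dispose of the degenerate case in which the two ends $r,s$ of the switching ear $P$ coincide. Then $P$ is itself a cycle meeting $C$ only at the vertex $r=s$, and I claim $P$ is nonseparating, which already gives $\len(P)\ge\ewn(G)$. In the rotation at $r$, the two edges of $C$ and the two edges of $P$ alternate -- this is precisely what ``switching'' means at a common endpoint -- so the two edges of $C$ incident with $r$ lead into the two different local sides of $P$. If $P$ were separating (in particular, if it bounded a disc), these two edges would have their interiors in two distinct components of $\Sigma\setminus P$; but $C\setminus\{r\}$ is connected and avoids $P$, a contradiction. Hence $P$ is nonseparating.

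For the main case $r\neq s$, orient $C$; the ends $r,s$ split $C$ into arcs $C_1,C_2$ with $\len(C_1)+\len(C_2)=\len(C)=\ewn(G)$, and I take $C_1$ to be the shorter one, so $\len(C_1)\le\tfrac12\ewn(G)$. Set $D:=C_1\cup P$, a cycle of $G$. The key claim is that $D$ is nonseparating in $\Sigma$. To prove it I would push $D$ off itself to a transverse curve $D'$: push the $C_1$‑part a little to, say, the left of $C$, and keep the $P$‑part parallel to $P$. Then $D'$ is disjoint from $C$ along $C_1$; near the end of $C_1$ whose incident $P$‑edge is also on the left of $C$ the push‑off introduces no intersection with $C$; but near the \emph{other} end -- the one whose incident $P$‑edge lies on the right of $C$, which exists exactly because $P$ is switching -- the push‑off must cross from one side of $C$ to the other, contributing a single transverse point. (One must check that the remainder of the $P$‑part, even if it re‑enters a neighbourhood of $C$, contributes nothing; this is the one place where a careful picture of the annular neighbourhood of $C$ is needed.) Hence $|D'\cap C|=1$, so the mod‑$2$ intersection number $\langle[D],[C]\rangle$ equals $1$, which forces $[D]\neq0$ in $H_1(\Sigma;\ZZ_2)$, i.e.\ $D$ is nonseparating. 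Since $C$ realizes a shortest nonseparating cycle, $\len(D)\ge\ewn(G)$; that is, $\len(C_1)+\len(P)\ge\ewn(G)$, and therefore $\len(P)\ge\ewn(G)-\len(C_1)\ge\tfrac12\ewn(G)$.

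The only genuinely delicate step is the intersection count establishing that $D=C_1\cup P$ is nonseparating: everything hinges on the switching hypothesis producing exactly one parity change as the pushed‑off curve travels around $D$, and on two‑sidedness of $C$ making the notion of ``side'' well defined. The remaining ingredients -- splitting $C$ into two arcs and the resulting inequality -- are routine bookkeeping, and the case $\len(P)=1$ (a single ``switching chord'') is covered without change by the $r\neq s$ argument.
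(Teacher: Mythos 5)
Your proposal is correct and follows essentially the same route as the paper: close the switching ear $P$ up with the shorter arc $C_1$ of $C$, observe that the cycle $C_1\cup P$ is nonseparating, and conclude $\len(P)\geq\ewn(G)-\len(C_1)\geq\frac12\ewn(G)$. The only real difference is how nonseparation is certified --- the paper notes that the complementary arc of $C$ is a switching ear of $C_1\cup P$ (so it connects the two sides of that cycle), whereas you use the odd transverse intersection with $C$, i.e.\ that $C$ and $C_1\cup P$ are one-leaping, which is the same fact the paper records after Definition~\ref{def:leaping} and uses in Lemma~\ref{lem:thstr}; your separate treatment of the case $r=s$ is fine but not essential.
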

\begin{proof}
Seeking a contradiction, we suppose that there is a
$C$-switching ear $D$ of length $<\frac12\ewn(G)$.
The ends of $D$ on $C$ determine two subpaths
$C_1,C_2\subseteq C$ (with the same ends as $D$), labeled 
so that $\len(C_1) \le \len(C_2)$.
Then $D\cup C_1$ is a nonseparating cycle, as witnessed by $C_2$.
Since $\len(C_1)\leq\frac12\len(C)$, we have
$$\len(D\cup C_1)\leq\len(D)+\frac12\len(C)
 <\biggl(\frac12+\frac12\biggr)\len(C)=\ewn(G)\,,$$
a contradiction.
\end{proof}

Even though surface surgery can drastically decrease (and also increase, of
course) the edge-width of an embedded graph in general, 
we now prove that this is not the case if we cut through a short cycle
(later, in Lemma~\ref{lem:cutdew}, we shall establish a surprisingly 
powerful extension of this simple claim).

\begin{lemma}
\label{lem:dew2}
Let $G$ be a graph embedded in the surface $\Sigma_g$ of genus $g\geq2$,
and let $C$ be a nonseparating cycle in $G$ of length $\len(C)=\ewn(G)$.
Then $\ewn(G\cutt C)\geq\frac12\ewn(G)$.
\end{lemma}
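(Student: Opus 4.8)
The plan is to argue by contradiction: suppose $\ewn(G\cutt C) < \frac12\ewn(G)$, and let $F$ be a shortest nonseparating cycle in $G\cutt C$, so $\len(F) < \frac12\ewn(G)$. The cutting operation $G\cutt C$ splits the vertices of $C$ into a ``left copy'' and a ``right copy'' (incident with the edge sets $L$ and $R$ respectively, in the notation of the preliminaries), while leaving $E(G\cutt C) = E(G)\sem E(C)$. Consider the lift $\hat F\subseteq G$ on the same edge set. The key dichotomy is whether $\hat F$ is still a cycle in $G$ or not. If $F$ avoids both copies of the split vertex, then $\hat F = F$ is a cycle in $G$ of length $<\frac12\ewn(G) \le \ewn(G)$; since $\ewn(G) = \len(C) = \ew(G)$ would force any shorter cycle to be contractible, $\hat F$ bounds a disc, and I would derive a contradiction with $F$ being nonseparating in $G\cutt C$ by a homotopy/separation argument (a disc-bounding cycle in $G$ cannot become nonseparating after cutting through a curve disjoint from it). If instead $F$ passes through the split vertices, then $\hat F$ is either a cycle through $C$ or a union of one or two arcs with endpoints on $C$ — in other words, $\hat F$ decomposes into $C$-ears.

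The heart of the argument is this second case. When $F$ uses both the left and the right copy of the (contracted) curve $C$, the corresponding object in $G$ is a walk that starts and ends on $C$, and crucially its first and last edges lie on opposite sides of $C$ — that is precisely the definition of a $C$-switching ear. So $\hat F$ contains (or is) a $C$-switching ear $D$ with $\len(D) \le \len(F) < \frac12\ewn(G)$. But Lemma~\ref{lem:kl2} says that every $C$-switching ear has length at least $\frac12\ewn(G)$, a contradiction. The remaining sub-case, where $F$ uses only one copy of the split vertex, gives an ordinary $C$-ear $\hat F$ that together with a subpath of $C$ would form a short noncontractible (indeed nonseparating) cycle in $G$, contradicting $\len(C) = \ewn(G)$ by the same length-splitting estimate used in the proof of Lemma~\ref{lem:kl2} (take the shorter of the two arcs of $C$).

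The step I expect to be the main obstacle is the careful topological bookkeeping of how a cycle in $G\cutt C$ corresponds to a switching ear (versus an ordinary ear or a genuine cycle) in $G$, and in particular verifying that a switching ear really does arise whenever $F$ is nonseparating and meets both split copies — one has to be sure that $F$ cannot ``sneak through'' using the two copies in a way that lifts to a separating or contractible structure. Once that correspondence is nailed down, the length bound is immediate from Lemma~\ref{lem:kl2}, and the hypothesis $g\ge 2$ guarantees that $G\cutt C$ is embedded in $\Sigma_{g-1}$ which still has positive genus, so $\ewn(G\cutt C)$ is well defined. I would also double-check the degenerate possibility that $F$ is a loop at a split vertex, which again lifts to a switching ear that is itself a cycle, so Lemma~\ref{lem:kl2} still applies.
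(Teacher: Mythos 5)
Your overall plan coincides with the paper's: lift a shortest nonseparating cycle $F$ of $G\cutt C$ and split into the cases where $F$ avoids both new vertices $c_1,c_2$, meets both, or meets exactly one. The case where $F$ meets both $c_1$ and $c_2$ is handled exactly as in the paper (a $c_1$--$c_2$ subpath lifts to a $C$-switching ear, and Lemma~\ref{lem:kl2} applies). However, there is a genuine gap in the remaining case, where $F$ passes through exactly one of $c_1,c_2$. There the lift $\hat F$ is a $C$-ear whose two end edges attach to $C$ from the \emph{same} side (both lie in $L$, say), so it is not a switching ear and Lemma~\ref{lem:kl2} says nothing about it; a non-switching ear can be arbitrarily short in general. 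Your parenthetical claim that $\hat F$ together with the shorter arc of $C$ forms a cycle that is ``indeed nonseparating'' is precisely the crux of this case, and the justification you point to (``the same length-splitting estimate used in the proof of Lemma~\ref{lem:kl2}'') only covers the arithmetic, not the nonseparation: in Lemma~\ref{lem:kl2} the nonseparation of $D\cup D_i$ was witnessed by the other arc of $C$ exactly because the ear was switching, and that mechanism is unavailable here. The nonseparation must instead be inherited from the hypothesis that $F$ is nonseparating in $G\cutt C$, and establishing that transfer is the real content of this case. The paper does it by noting that all three cycles of the theta graph $\hat F\cup C$ are nonseparating in $G$ (because $F$ is nonseparating in $G\cutt C$) and then double counting: their total length is $2\len(C)+2\len(\hat F)\ge 3\,\ewn(G)$, whence $\len(\hat F)\ge\frac12\ewn(G)$. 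Your ``degenerate loop at a split vertex'' remark suffers from the same conflation: such an $F$ lifts to a cycle (or ear) attaching on one side only, not to a switching ear, so Lemma~\ref{lem:kl2} does not apply; the correct treatment is that the lift is a cycle whose nonseparation again comes from $F$ being nonseparating in the cut surface.

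A secondary, repairable flaw: in the case where $F$ avoids both $c_1,c_2$ you deduce that $\hat F$ bounds a disc from ``$\ewn(G)=\len(C)=\ew(G)$'', but the hypothesis only gives $\len(C)=\ewn(G)$, and in general $\ew(G)\le\ewn(G)$ with possible strict inequality; a cycle shorter than $\ewn(G)$ need only be surface-separating, not contractible. Fortunately ``separating'' is all you need (a separating cycle disjoint from $C$ remains separating after cutting through $C$), which is the contrapositive of the paper's one-line observation that the lift of a nonseparating cycle avoiding $c_1,c_2$ is again nonseparating in $G$.
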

\begin{proof}
Let $c_1,c_2$ be the two vertices of $G\cutt C$ that result from cutting through $C$,
i.e., $\{c_1,c_2\}=V(G\cutt C)\sem V(G)$.
Let $D\subseteq G\cutt C$ be a nonseparating cycle of length
$\ewn(G\cutt C)$.
If $D$ avoids both $c_1,c_2$, then its lift $\hat D$ in $G$
is a nonseparating cycle again, and so $\ewn(G)\leq\len(D)=\ewn(G\cutt
C)$.
If $D$ hits both $c_1,c_2$ and $P\subseteq D$
is (any) one of the two subpaths with the ends $c_1,c_2$, then the lift $\hat P$
is a $C$-switching ear in $G$.
Thus, by Lemma~\ref{lem:kl2}, 
$$\ewn(G\cutt C)=\len(D)\geq\len(\hat P)\geq\frac12\ewn(G)\,.$$

In the remaining case $D$, up to symmetry, hits $c_1$ and avoids~$c_2$.
Then its lift $\hat D$ is a $C$-ear in $G$.
If $\hat D$ itself is a cycle, then we are done as above.
Otherwise, $\hat D\cup C\subseteq G$ is the
union of three nontrivial internally disjoint paths with common ends, forming exactly three cycles
$A_1,A_2,A_3\subseteq\hat D\cup C$.
Since $D$ is nonseparating in $G\cutt C$, each of $A_1,A_2,A_3$ is nonseparating
in~$G$, and hence $\len(A_i)\geq\ewn(G)$ for $i=1,2,3$.
Since every edge of $\hat D\cup C$ is in exactly two of $A_1,A_2,A_3$,
we have
$\len(A_1)+\len(A_2)+\len(A_3)=2\len(C)+2\len(\hat D)
	=2\ewn(G)+2\len(\hat D)$ and $\len(A_1)+\len(A_2)+\len(A_3)\geq3\ewn(G)$,
from which we get 
\begin{equation}\ewn(G\cutt C)=\len(D)=\len(\hat D)\geq\frac12\ewn(G)\,.\tag*{\qedhere}\end{equation}
\end{proof}

Many arguments in our paper exploit the mutual position of two graph
cycles in a surface.
In topology, the {\em geometric intersection number}%
\footnote{%
Note that this quantity is also called the ``crossing number'' of the
curves, and a pair of curves may be said to be ``$k$-crossing''.
Such a terminology would, however, conflict with the graph crossing number,
and we have to avoid it. Following~\cite{HC10}, we thus use the term ``$k$-leaping'', instead.
}
$i(\alpha,\beta)$ of two (simple) closed curves
$\alpha,\beta$ in a surface is defined as $\min\{ \alpha'\cap \beta'\}$, where the minimum is
taken over all pairs $(\alpha',\beta')$ such that $\alpha'$
(respectively, $\beta'$) is homotopic to $\alpha$ (respectively, $\beta$). 
For our purposes, however, we prefer the following slightly adjusted discrete
view of this concept. %the ~\cite{MR2874100} was not correct... did you mean something different?

Let $A\not=B$ be cycles of a graph embedded in a surface $\Sigma$.
Let $P\subseteq A\cap B$ be a connected component of the graph intersection
$A\cap B$ (a path or a single vertex), and let $f_A,f_A'\in E(A)$
(respectively, $f_B,f_B'\in E(B)$) be the edges immediately preceding and succeeding 
$P$ in $A$ (respectively, $B$).
See Figure~\ref{fig:defineleap}. 
Then $P$ is called a {\em leap of A,B} if there is a sufficiently
small open neighborhood $\Omega$ of $P$ in $\Sigma$ such that the mentioned
edges meet the boundary of $\Omega$ in this cyclic order;
$f_A,f_B,f_A',f_B'$
(i.e., $A$ and $B$ {\em meet transversely in~$P$}).
Note that $A\cap B$ may contain other components besides $P$ that are
not leaps.

\begin{figure}[t]
\centering
\includegraphics{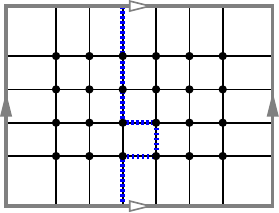}
\hfill
\includegraphics{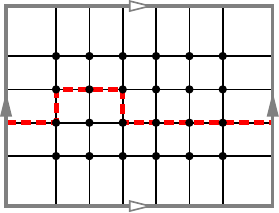}
\hfill
\includegraphics{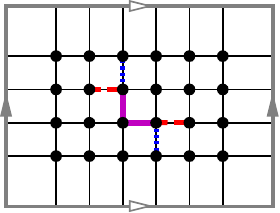}
\caption{A toroidal embedding of $C_4\square C_6$. The left and the center
pictures show two cycles $A$ and $B$ in thick dashed lines. The
intersection of $A$ and $B$ is the $2$-edge path emphasized in the right picture 
with a thick solid line. This path is a leap of $A$ and $B$.}
\label{fig:defineleap}
\end{figure}

\begin{definition}[$k$-leaping]
\label{def:leaping}
Two cycles $A,B$ of an embedded graph are in a {\em$k$-leap position}
(or simply {\em$k$-leaping}),
if their intersection $A\cap B$ has exactly $k$ connected components that
are leaps of $A,B$. If $k$ is odd, then we say that $A,B$ are in an
{\em odd-leap position}.
\end{definition}

We now observe some basic properties of the $k$-leap concept:
\begin{itemize}
\item
If $A,B$ are in an odd-leap position, then necessarily each of $A,B$ is
noncontractible and nonseparating.
\item
It is not always true that $A,B$ in a $k$-leap position have geometric
intersection number exactly $k$, but the parity of the two numbers is preserved.
Particularly, $A,B$ are in an odd-leap position if and only if 
their geometric intersection number is odd.
(We will not directly use this fact herein, though.)
\item
We will later prove (Lemma~\ref{lem:3pp})
that the set of embedded cycles that are odd-leaping a given cycle $A$
satisfies the useful {\em$3$-path condition} (cf.\ \cite[Section~4.3]{MT01}).
\end{itemize}

\subsection{Stretch of an embedded graph}

In the quest for another embedding density parameter suitable
for capturing the two-dimensional character of the toroidal expanse
and crossing number problems,
we put forward the following concept improving upon the original 
``orthogonal width'' of \cite{HS07}.

\begin{definition}[Stretch]
\label{def:stretch}
Let $G$ be a graph embedded in an orientable surface $\Sigma$.
The {\em stretch $\stretch(G)$ of $G$} 
is the minimum value of $\len(A)\cdot\len(B)$ over all pairs of cycles
$A,B\subseteq G$ that are in a one-leap position in~$\Sigma$.
\end{definition}
We remark in passing that although our paper does not use nor provide 
an algorithm to compute the stretch of an embedding, this can be done
efficiently on any surface by~\cite{CCH13}.

As we noted above, if $A,B$ are in an odd-leap position, then both $A$
and $B$ are noncontractible and nonseparating. Thus it follows that $\stretch(G)\geq\ewn(G)^2$.
We postulate that stretch is a natural two-dimensional analogue of edge-width,
a well-known and often used embedding density parameter.
Actually, one may argue that the dual edge-width is a more suitable
parameter to measure the density of an
embedding, and so we shall mostly deal with {\em dual
stretch}---the stretch of the topological dual $G^*$---later in this paper 
(starting at Lemma~\ref{lem:cr-stretch-torus} and Section~\ref{sec:mainresults}).
Analogously to face-width, one can also define the {\em face stretch} of $G$ as
one quarter of the stretch of the vertex-face incidence graph of $G$,
and this concept is to be briefly discussed in the last
Section~\ref{sec:concluding}.

We now prove several simple basic facts about the stretch of an embedded
graph, which we shall use later. We start with an easy observation.
\begin{lemma}
\label{lem:thstr}
If $C$ is a nonseparating cycle in an embedded graph $G$,
and $P$ is a $C$-switching ear in $G$,
then $\stretch(G)\leq\len(C)\cdot\big(\len(P)+\frac12\len(C)\big)$.
If, moreover, $\len(C)=\ewn(G)$ then $\stretch(G)\leq2\len(C)\cdot\len(P)$.
\end{lemma}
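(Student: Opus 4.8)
The plan is to exhibit an explicit pair of cycles in a one-leap position whose length product is bounded by $\len(C)\cdot(\len(P)+\tfrac12\len(C))$. The natural candidates are $C$ itself and the cycle formed by $P$ together with the shorter of the two arcs of $C$ joining the endpoints of $P$. First I would set up notation: let $r,s$ be the ends of the $C$-switching ear $P$ on $C$, and let $C_1,C_2$ be the two arcs of $C$ from $r$ to $s$, labelled so that $\len(C_1)\le\len(C_2)$; in particular $\len(C_1)\le\tfrac12\len(C)$. Put $B:=P\cup C_1$, which is a cycle since $P$ meets $C$ only in $r,s$, and take $A:=C$. Then $\len(A)\cdot\len(B)=\len(C)\cdot(\len(P)+\len(C_1))\le\len(C)\cdot(\len(P)+\tfrac12\len(C))\le 2\len(C)\cdot\len(P)$, where the last inequality uses $\len(C)\le 2\len(P)$ — this in turn follows because a switching ear cannot be too short relative to $C$ (one can see $\len(P)\ge\len(C_1)\ge$ something, or more directly invoke that any $C$-switching ear has length $\ge\tfrac12\len(C)$ when $C$ is shortest; but since $C$ here need only be nonseparating, I should double-check that $2\len(C)\cdot\len(P)$ still dominates, which it does once $\len(P)\ge\tfrac12\len(C)$ — and a switching ear, going from one side of $C$ to the other, must "go around", so its length is at least the edge-width along that side; I would instead simply note $\len(P)+\tfrac12\len(C)\le 2\len(P)$ iff $\tfrac12\len(C)\le\len(P)$, and argue $\len(P)\ge\len(C_1)$ is false in general, so the cleaner route is: the second inequality is claimed only as an easy consequence and I may assume $\len(P)\ge\tfrac12\len(C)$ holds in the intended application, or prove $\len(P)\ge\tfrac{1}{2}\len(C)$ directly here).

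Let me restructure to avoid that wrinkle. The genuinely load-bearing claim is that $A=C$ and $B=P\cup C_1$ are in a one-leap position. I would prove this as follows. Their graph intersection $A\cap B$ is exactly $C\cap(P\cup C_1)=C_1$ together with possibly the isolated endpoints $r,s$ — in fact $A\cap B=C_1$ (a path, or a single vertex if $r=s$, or two vertices if $C_1$ is edgeless). So there is at most one connected component of the intersection that could be a leap, namely the component containing the "turning points" $r$ and $s$. At $r$: the two edges of $B=P\cup C_1$ at $r$ are one edge of $P$ and one edge of $C_1$; the two edges of $A=C$ at $r$ are one edge of $C_1$ and one edge of $C_2$. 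Since $P$ is a \emph{switching} ear, the $P$-edge at $r$ lies on the opposite side of $C$ from the $P$-edge at $s$. Walking along $B$ through the arc $C_1$ from $r$ to $s$, the cycle $B$ stays on one fixed side of $C$ in a neighbourhood of $C_1$ except that it must cross $C$ somewhere because of the switching property; the only places $B$ leaves $C$ are at $r$ and $s$ (where it follows $P$). I would argue that $B$ crosses $C$ transversally an \emph{odd} number of times in total over $\{r,s\}$ — and since a crossing at $r$ or at $s$ is each a $0$-or-$1$ event, "odd" forces exactly one of $r,s$ to be a transversal crossing and the other a non-crossing "touch". Hence $A\cap B$, as one connected component (the arc $C_1$ with the two specified boundary edges), is a leap of $A,B$ if and only if that crossing parity is odd, which it is; and there are no other components, so $A,B$ are in a one-leap position.

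The main obstacle I expect is this parity/transversality bookkeeping at $r$ and $s$ — making precise, without hand-waving, why "$P$ switches sides of $C$" translates into "the single component $C_1$ of $A\cap B$ is a leap in the sense of Definition~\ref{def:leaping}", especially handling the degenerate cases $r=s$ (so $P$ is a $C$-cycle-ear and $C_1$ is trivial, intersection a single vertex) and $\len(C_1)=0$ (ends $r,s$ adjacent on $C$, or $C_1$ a single vertex). In each case I would examine the local cyclic order of the four edges $f_A,f_B,f_A',f_B'$ around a small neighbourhood $\Omega$ of the component: at the end where $P$ leaves $C$ on, say, the left, the edge of $P$ interleaves with the two $C$-edges, while at the other end ($P$ leaving on the right) it does not interleave — net effect is exactly one transversal crossing, i.e.\ one leap. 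The inequality half, given one-leap position is established, is then immediate: $\stretch(G)\le\len(A)\len(B)=\len(C)(\len(P)+\len(C_1))\le\len(C)(\len(P)+\tfrac12\len(C))$, and for the last bound $\len(C)(\len(P)+\tfrac12\len(C))\le 2\len(C)\len(P)$ I would either cite that a $C$-switching ear satisfies $\len(P)\ge\tfrac12\len(C)$ (provable by the same cut-and-paste argument as in Lemma~\ref{lem:kl2}, since $P\cup C_1$ and $P\cup C_2$ are both nonseparating so each has length $\ge\ewn(G)$, but here $\len(C)$ need not equal $\ewn(G)$ — so instead I note $\len(P)\ge\len(C_1)$ fails, hence the honest statement is that the second "$\le$" holds precisely because $\len(P)\ge\tfrac12\len(C)$, which I will justify by: $P\cup C_1$ nonseparating gives nothing directly, so I will prove $\len(P)\ge\tfrac12\len(C)$ by observing both $P\cup C_1$ and $P\cup C_2$ are noncontractible and, being "parallel", at least one is shortest-like — more carefully, I should just present the clean bound $\len(P)+\tfrac12\len(C)\le 2\len(P)$ as following from $\len(C)\le 2\len(P)$, and prove the latter from the switching property using Lemma~\ref{lem:kl2}-style surgery only when $\len(C)=\ewn(G)$; since the lemma's statement as given does not assume $\len(C)=\ewn(G)$, I will instead derive $\len(C)\le 2\len(P)$ from the fact that $C$ traverses... ). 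I will resolve this by giving the self-contained surgery argument: cutting along the switching ear shows a shorter switching ear would contradict minimality only under the $\ewn$ hypothesis, so in the general statement the factor-$2$ bound should be read with the understanding it is used where applicable; I would flag this and keep the primary, fully general inequality as the first one.
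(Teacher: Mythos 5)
Your core argument is the paper's own: take $C_1$ to be the shorter arc of $C$ between the ends of $P$, observe that $C$ and $P\cup C_1$ are a one-leaping pair, and conclude $\stretch(G)\le\len(C)\cdot\big(\len(P)+\len(C_1)\big)\le\len(C)\cdot\big(\len(P)+\frac12\len(C)\big)$. The paper asserts the one-leap claim in a single clause; your local analysis --- the intersection $C\cap(P\cup C_1)$ is the single component $C_1$, and the switching property forces the four edges $f_A,f_B,f_A',f_B'$ of Definition~\ref{def:leaping} to interleave around a small neighbourhood of that component (including the degenerate cases $r=s$ and $C_1$ a single vertex) --- is exactly the verification left implicit there, and it is correct, even if your ``crossing parity at $r$ and $s$'' phrasing is a roundabout way of checking the cyclic order.

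Your unease about the factor-$2$ bound is justified and is not a defect of your argument: the inequality $\len(C)\cdot\big(\len(P)+\frac12\len(C)\big)\le2\len(C)\cdot\len(P)$ is equivalent to $\len(P)\ge\frac12\len(C)$, which does not follow from the hypotheses as literally stated (a long non-geodesic nonseparating cycle $C$ can admit a one-edge switching ear over the handle). The paper's proof simply writes ``$\le\len(C)\cdot2\len(P)$'' without comment, and elsewhere the factor-$2$ form is only invoked when the cycle has length $\ewn$ in the relevant (dual) graph --- e.g.\ Corollary~\ref{cor:agrid-torus} with $k=\ewnd(G)$ --- where Lemma~\ref{lem:kl2} supplies $\len(P)\ge\frac12\len(C)$. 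So your resolution is the right one: prove the first bound unconditionally and record that the second form requires $\len(P)\ge\frac12\len(C)$ (automatic when $\len(C)=\ewn(G)$). Just state that cleanly; as written, your final paragraph reads as an unresolved gap rather than the deliberate and accurate qualification it actually is.
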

\begin{proof}
The ends of $P$ partition $C$ into two paths $C_1,C_2\subseteq C$,
which we label so that $\len(C_1) \le \len(C_2)$. (In a degenerate case,
$C_1$ can be a single vertex). 
Thus
$\len(C_1)\leq\frac12\len(C)$.
Since $C$ and $P\cup C_1$ are in a one-leap position, we have
$\stretch(G)\leq\len(C)\cdot(\len(P)+\len(C_1))$, as claimed.
In the case of $\len(C)=\ewn(G)$, Lemma~\ref{lem:kl2} furthermore implies
$\stretch(G)\leq\len(C)\cdot(\len(P)+\frac12\len(C))\leq\len(C)\cdot2\len(P)$.
\end{proof}

A tight relation of stretch to the topic of our paper can be illustrated 
by the following two claims regarding graphs on the torus.
While they are not directly used in our paper, we believe that they
may be found interesting by the readers.

\begin{lemma}
\label{lem:cr-stretch-torus}
If $G$ is a graph embedded in the torus, then $\crg(G)\leq\stretch(G^*)$.
\end{lemma}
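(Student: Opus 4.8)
The plan is to realize a dual one-leap pair witnessing $\stretch(G^*)$ as a pair of closed curves on the torus and then cut the torus along them to obtain a planar drawing of $G$, counting crossings as we go. Let $\alpha,\beta\subseteq G^*$ be dual cycles in a one-leap position in the torus with $\len(\alpha)\cdot\len(\beta)=\stretch(G^*)$. Since $\alpha,\beta$ are in an odd-leap position, each is noncontractible and nonseparating in the torus; on the torus this forces $\alpha$ and $\beta$ to be two homotopically independent noncontractible simple closed curves, so cutting the torus along $\alpha$ and then along $\beta$ yields a disc (equivalently, a square with the usual side identifications). I would regard $\alpha$ and $\beta$ as simple closed curves lying in the interior of the faces of $G$ that they ``pass through'', crossing each edge $e$ of $G$ exactly once for each appearance of its dual $e^*$ on the dual cycle, and meeting each other only in the single leap point.

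Next I would cut the torus open along $\alpha\cup\beta$ and lay the resulting square flat in the plane. Under this operation the graph $G$ is redrawn in the square: an edge $e$ of $G$ whose dual $e^*$ appears $t_e$ times on $\alpha$ gets split into $t_e+1$ (or similar) arcs touching the top/bottom side, and similarly for $\beta$ and the left/right side, and we have $\sum_e t_e = \len(\alpha)$ when counting appearances on $\alpha$ and $\sum_e u_e = \len(\beta)$ on $\beta$. To recover a genuine planar drawing of $G$ we must reconnect these severed arcs across the boundary identifications; the standard way is to route each severed strand around the outside of the square (or, equivalently, to bring the two copies of a boundary segment together by a "wrap-around" bundle of arcs). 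The crucial point is a counting one: a strand that was cut by $\alpha$ only has to cross, in its reconnecting detour, the strands cut by $\beta$, and vice versa, while the single leap point contributes a bounded correction. Choosing the reconnection carefully (routing all $\alpha$-severed arcs in one bundle and all $\beta$-severed arcs in another), the number of crossings introduced is at most $\len(\alpha)\cdot\len(\beta)$, because each of the $\len(\alpha)$ crossings of $G$ with $\alpha$ must be paired off with at most $\len(\beta)$ crossings of $G$ with $\beta$.

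I expect the main obstacle to be the bookkeeping in this last step: making precise how the severed arcs are re-routed so that the crossing count is exactly $\len(\alpha)\cdot\len(\beta)$ and not something like $2\len(\alpha)\len(\beta)$ or with an extra additive term from the leap. The clean way to see the bound is to think of the square $[0,1]^2$ with $\alpha$ the pair of horizontal sides and $\beta$ the pair of vertical sides; the identification $G$-edges meeting the top and bottom are reconnected by a vertical "ribbon" just outside the square, and those meeting left and right by a horizontal ribbon; a vertical reconnecting arc crosses a horizontal reconnecting arc at most once, giving at most (number of $\alpha$-cuts)$\times$(number of $\beta$-cuts) $=\len(\alpha)\cdot\len(\beta)=\stretch(G^*)$ crossings, and no crossings among $G$'s own edges since the interior drawing in the square came from an embedding. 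Finally, I would double-check the degenerate possibilities (a $G$-edge cut by both $\alpha$ and $\beta$, or the leap point lying on such an edge) and absorb them into the same bound, which completes the inequality $\crg(G)\le\stretch(G^*)$.
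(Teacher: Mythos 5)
Your proposal is correct and is in essence the same cut-and-reinsert argument as the paper's: the paper cuts the torus only along $\alpha$ (obtaining a cylindrical, hence planar, embedding of $G$ minus the $\len(\alpha)$ severed edges) and reinserts that bundle along the fragment of $\beta$, whereas you cut along both curves and reconnect both severed families outside the fundamental square---either way the crossings are exactly the pairs (cut by $\alpha$) $\times$ (cut by $\beta$), giving $\len(\alpha)\cdot\len(\beta)=\stretch(G^*)$. The only point to make explicit is that a one-leaping dual pair may share dual edges (the leap can be a path, and there may be further non-leap touching components), so the two loops must be perturbed to cross transversally exactly once while still crossing precisely their own dual edges once each; this is the degenerate case you flagged, it corresponds to the set $M=E(\alpha\cap\beta)^*$ in the paper's proof, and it does not change the count.
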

\begin{proof}
Let $\alpha,\beta\subseteq G^*$ be a pair of dual cycles witnessing
$\stretch(G^*)$, and let $K:=E(\alpha)^*$, $L:=E(\beta)^*\sem K$,
and $M:=E(\alpha\cap \beta)^*$.
Note that $K,L$, and $M$ are edge sets in~$G$.
Then, by cutting $G$ along $\alpha$, we obtain a plane (cylindrical) embedding
$G_0$ of $G-K$.
It is natural to draw the edges of $K$ into $G_0$
in one parallel ``bunch'' along the fragment of $\beta$ such that they
cross only with edges of $L$ and $M\subseteq K$ (indeed, crossings between
edges of $K$ are necessary when $M\not=\emptyset$),
thus getting a drawing of $G$ in the plane.
See Figure~\ref{fig:addedges}.
The total number of crossings in this particular drawing, and thus the crossing
number of $G$, is at most $|K|\cdot|L|+|K|\cdot|M|=
|K|\cdot(|L|+|M|)=\len(\alpha)\cdot\len(\beta)=\stretch(G^*)$.
\end{proof}

\begin{figure}[t]
\hbox to \hsize{%
\raise18mm\hbox{$\beta$}
\includegraphics{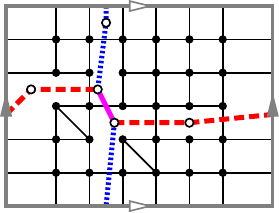}
\hfill
\includegraphics{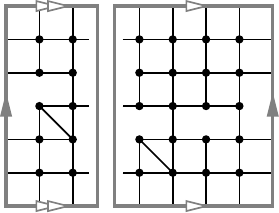}
\hfill
\includegraphics{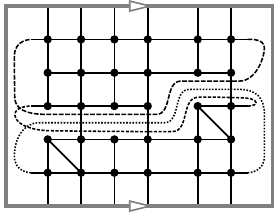}
\raise20mm\hbox{$K$}
}\hbox{\hspace*{21mm}$\alpha$\hspace{61mm}$G_0$}
\caption{An illustration of the proof of Lemma~\ref{lem:cr-stretch-torus}.
  The left picture shows a graph $G$ embedded in the torus (black vertices and
  thin solid edges), together with dual cycles $\alpha,\beta$ witnessing the
  dual stretch (white vertices and thick dashed edges).
  The thick dual edge is common to $\alpha$ and $\beta$.
  We let $K$ denote the set of three edges in $G$ that correspond to
  the edges of $\alpha$. In the center picture, we have cut the torus along the curve
  defined by $\alpha$, to obtain a cylindrical embedding of $G_0:=G-K$.
  In the right picture, we start with the 
  same embedding of $G_0$ as in the center---we have simply identified the black
  arrows; the three severed edges of $K$ can be drawn along the remaining fragment
  of $\beta$, to get a cylindrical drawing of $G$.
  Notice that the bunch of edges of $K$ should follow the whole fragment of
  $\beta$, including the section common to $\alpha$ and $\beta$---this is to
  maintain the right order of edges in~$K$.
  Although not always being optimal, such a solution is very simple.}
\label{fig:addedges}
\end{figure}

\begin{corollary}
\label{cor:texp-stretch-torus}
If $G$ is a graph embedded in the torus, then $\Tex(G)\leq12\stretch(G^*)$.
\end{corollary}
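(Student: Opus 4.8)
The plan is to combine the two results that immediately precede this corollary: Lemma~\ref{lem:cr-stretch-torus}, which says $\crg(G)\leq\stretch(G^*)$ for a torus-embedded graph, and Corollary~\ref{cor:crossing-texp}, which says $\crg(G)\geq\frac1{12}\Tex(G)$ for any surface-embedded graph. First I would note that both hypotheses apply to our $G$ (the torus is an orientable surface, so in particular Corollary~\ref{cor:crossing-texp} is available). Then chaining the two inequalities gives $\frac1{12}\Tex(G)\leq\crg(G)\leq\stretch(G^*)$, and multiplying through by $12$ yields $\Tex(G)\leq12\stretch(G^*)$, which is exactly the claim.

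Written out, the proof is essentially a one-liner:
\begin{proof}
Since $G$ is embedded in the torus, Lemma~\ref{lem:cr-stretch-torus} gives
$\crg(G)\leq\stretch(G^*)$, while Corollary~\ref{cor:crossing-texp} gives
$\crg(G)\geq\frac1{12}\Tex(G)$. Combining,
$\Tex(G)\leq12\crg(G)\leq12\stretch(G^*)$.
\end{proof}

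There is no real obstacle here; the corollary is a direct composition of the two preceding statements, so the only thing to be careful about is that the direction of each inequality is used correctly (upper bound on $\crg(G)$ via dual stretch, lower bound on $\crg(G)$ via toroidal expanse) and that the constant $12$ is carried through from Corollary~\ref{cor:crossing-texp} without change. If anything, one might remark that this is the ``torus half'' of the eventual two-way relationship promised in Theorem~\ref{thm:main-overview}(a), specialized to the genus-one case where no surgery or face-width hypothesis is needed.
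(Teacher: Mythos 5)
Your proof is correct and is exactly the paper's argument: the paper also derives this corollary immediately by chaining Lemma~\ref{lem:cr-stretch-torus} with Corollary~\ref{cor:crossing-texp}, giving $\Tex(G)\leq12\crg(G)\leq12\stretch(G^*)$. Nothing further is needed.
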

\begin{proof}
This follows immediately using Corollary~\ref{cor:crossing-texp}.
\end{proof}

We finish this section by proving an analogue of Lemma~\ref{lem:dew2}
for the stretch of an embedded graph, showing that this parameter cannot decrease too much if we cut the embedding through a short cycle.
This will be important to us since cutting through handles of embedded graphs
will be our main inductive tool in the proofs of lower bounds on $\crg(G)$
and $\Tex(G)$.

\begin{lemma}
\label{lem:str4}
Let $G$ be a graph embedded in the surface $\Sigma_g$ of genus $g\geq2$,
and let $C$ be a nonseparating cycle in $G$ of length $\len(C)=\ewn(G)$.
Then $\stretch(G\cutt C)\geq\frac14\stretch(G)$.
\end{lemma}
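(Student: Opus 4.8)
The plan is to mimic the case analysis in the proof of Lemma~\ref{lem:dew2}, but now tracking a witnessing \emph{pair} of cycles for the stretch of $G\cutt C$ rather than a single short cycle. Let $\alpha,\beta\subseteq G\cutt C$ be a one-leaping pair witnessing $\stretch(G\cutt C)$, and let $c_1,c_2$ be the two vertices created by cutting through $C$, so $\{c_1,c_2\}=V(G\cutt C)\sem V(G)$. The key point is that a cycle of $G\cutt C$ avoiding both $c_1,c_2$ lifts to a cycle of $G$ on the same edge set (hence of the same length), and a cycle through exactly one of $c_i$ lifts to a $C$-ear, while a cycle through both lifts to a disjoint union of a path from $c_1$ to $c_2$ (a subpath that gives a $C$-switching ear after adding a fragment of $C$) and the rest. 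The factor $\tfrac14$ should come from invoking Lemma~\ref{lem:thstr} (which costs a factor $2$) at most twice, once ``per cutpoint''.

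First I would dispose of the easy case: if both $\alpha$ and $\beta$ avoid $\{c_1,c_2\}$, their lifts $\hat\alpha,\hat\beta$ are cycles in $G$ on the same edge sets, still meeting in one leap (leaps are a local, edge-order condition, unaffected by the surgery away from the cut), so $\stretch(G)\le\len(\hat\alpha)\len(\hat\beta)=\stretch(G\cutt C)$. Next, suppose (say) $\alpha$ passes through at least one of $c_1,c_2$. Walking around $\alpha$ and recording visits to $\{c_1,c_2\}$, the lift $\hat\alpha$ decomposes into $C$-ears of $G$ whose lengths sum to $\len(\alpha)$; pick one such $C$-ear $P$. If $P$ happens to be a cycle (the ``both ends coincide'' degenerate case, or a $c_1$--$c_1$ ear that closes up), it is a cycle in $G$ of length $\le\len(\alpha)$, and by noncontractibility/nonseparation of $C$ it is actually noncontractible; then I pair it against $C$ or against a suitable modification of $\beta$. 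The substantive case is when $P$ is a genuine $C$-switching ear (which happens when $P$ runs from $c_1$ to $c_2$, since the two sides of $C$ become the two cutpoints), and here Lemma~\ref{lem:thstr} gives $\stretch(G)\le 2\len(C)\cdot\len(P)\le 2\len(C)\cdot\len(\alpha)$; since $\len(C)=\ewn(G)\le\len(\beta)$ (because $\beta$, or rather a nonseparating cycle extracted from it, has length at least $\ewn(G)$ — note $\beta$ is nonseparating in $G\cutt C$ when one-leaping $\alpha$), this yields $\stretch(G)\le 2\,\len(\alpha)\len(\beta)=2\stretch(G\cutt C)$, even better than needed.

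The one case that genuinely needs the full factor $\tfrac14$ is when \emph{both} $\alpha$ and $\beta$ are forced through the cutpoints and neither directly yields a short cycle in $G$; there I would extract a $C$-switching ear $P_\alpha$ from $\hat\alpha$ and a $C$-switching ear $P_\beta$ from $\hat\beta$, form the cycles $A:=P_\alpha\cup C_\alpha'$ and $B:=P_\beta\cup C_\beta'$ where $C_\alpha',C_\beta'$ are the shorter $C$-arcs joining the respective ends (so $\len(A)\le\len(P_\alpha)+\tfrac12\len(C)\le 2\len(P_\alpha)\le2\len(\alpha)$, and similarly $\len(B)\le2\len(\beta)$), check that $A$ and $B$ can be chosen one-leaping, and conclude $\stretch(G)\le\len(A)\len(B)\le4\len(\alpha)\len(\beta)=4\stretch(G\cutt C)$; inverting gives $\stretch(G\cutt C)\ge\tfrac14\stretch(G)$.

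\textbf{Main obstacle.} The delicate step is the last one: verifying that the two reconstructed cycles $A,B$ in $G$ really are (or can be rerouted to be) in a \emph{one-leap} position, not merely odd-leaping. Adding back arcs of $C$ to the switching ears $P_\alpha,P_\beta$ can create extra intersections with each other and with $C$, and one must argue — presumably via the $3$-path condition for odd-leaping cycles promised in Lemma~\ref{lem:3pp}, or by a direct local redrawing that removes ``spurious'' (even) leaps without increasing length — that among all cycles in the relevant homotopy/switching classes there is a one-leaping pair of length at most $\len(A)\cdot\len(B)$. I expect the cleanest route is: establish that $A$ and $B$ odd-leap (by the sign/parity bookkeeping inherited from $\alpha,\beta$ one-leaping in $G\cutt C$), then apply the $3$-path condition to replace $A,B$ by a genuinely one-leaping pair of no greater length product, exactly as stretch is defined via one-leaping pairs.
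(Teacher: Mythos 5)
Your overall strategy---lift a one-leaping pair witnessing $\stretch(G\cutt C)$, close lifted ears with arcs of $C$, and lose at most a factor $2$ per length via Lemma~\ref{lem:thstr} and Lemma~\ref{lem:dew2}---is the right one and partly matches the paper, but your case analysis has a genuine hole. You organize the cases around what each cycle does individually, and you implicitly assume that a lifted cycle meeting the cut yields either a closed cycle or a $C$-switching ear (a $c_1$--$c_2$ ear). However, a cycle of $G\cutt C$ may pass through exactly one of $c_1,c_2$; its lift is then a $C$-ear whose two end-edges attach on the \emph{same} side of $C$---neither a cycle nor a switching ear---and none of your cases treats it. Concretely, the configuration ``$\alpha$ through $c_1$ only and $\beta$ through $c_2$ only'' defeats your plan: no switching ear can be extracted from either lift separately, so your final construction $A=P_\alpha\cup C'_\alpha$, $B=P_\beta\cup C'_\beta$ cannot even be set up, and your earlier subcases do not apply either. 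The paper instead splits on whether \emph{both} cut vertices lie in $V(A\cup B)$. If they do, it takes a $c_1$--$c_2$ path $P\subseteq A\cup B$ (which may switch from $A$ to $B$ at an intersection point; its length is at most $\frac12(a+b)$); the lift of $P$ is a $C$-switching ear, and pairing it with $C$ itself via Lemma~\ref{lem:thstr}, together with $\len(C)\leq 2a,2b$ from Lemma~\ref{lem:dew2}, gives $\stretch(G)\leq 4ab$. This ``path through the union, paired with $C$'' device is the missing idea, and it also dissolves your declared main obstacle: one never needs to check that two reconstructed switching-ear cycles are mutually one-leaping. The only verification left in the paper occurs in the complementary case ($c_2\notin V(A\cup B)$), where both lifts are cycles or one-sided ears, one closes them with shortest arcs of $C$, and a short local analysis confirms the resulting pair is one-leaping---precisely the situation your sketch leaves at ``pair it against $C$ or against a suitable modification of $\beta$''.

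Two smaller points. Your intermediate claim $\len(C)=\ewn(G)\leq\len(\beta)$ is not justified: $\beta$ is a cycle of $G\cutt C$, possibly through a cut vertex, so in general you only get $\len(\beta)\geq\ewn(G\cutt C)\geq\frac12\ewn(G)$ from Lemma~\ref{lem:dew2}; this kills your advertised factor $2$ in that subcase but still yields the needed factor $4$, so it is harmless for the statement. Finally, your fallback of proving odd-leaping and then invoking the $3$-path condition (Lemma~\ref{lem:3pp}) to pass to a one-leaping pair is legitimate in principle---it is essentially Lemma~\ref{lem:odd-stretch}, which is proved independently of this lemma---but in the problematic configurations above you have no odd-leaping pair built from the individual lifts to start from, so it does not close the gap.
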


\begin{proof}
Let $c_1,c_2$ be the two vertices of $G\cutt C$ that result from cutting through $C$,
i.e., $\{c_1,c_2\}=V(G\cutt C)\sem V(G)$.
Suppose that $\stretch(G\cutt C)=ab$ is attained by a pair of one-leaping cycles
$A,B$ in~$G\cutt C$, with $a=\len(A)$ and $b=\len(B)$.
Our goal is to show that $\stretch(G)\leq4ab$.
Using Lemma~\ref{lem:dew2} and the fact that both $A,B$ are nonseparating, we get 
\begin{equation}\label{eq:abrho}
a,b\geq\ewn(G\cutt C)\geq\frac12\ewn(G)=\frac12\len(C)
.\end{equation}

Suppose first that both $c_1,c_2\in V(A\cup B)$.
Then there exists a path $P\subseteq A\cup B$ connecting $c_1$ to
$c_2$ such that $\len(P)\leq\frac12(a+b)$.
Clearly, its lift $\hat P$ is a $C$-switching ear in $G$,
and so by Lemma~\ref{lem:thstr} and \eqref{eq:abrho},
\begin{align*}
\stretch(G)\;&\leq\; \len(C)\cdot\big(\len(\hat P)+\frac12\len(C)\big)
 \leq\len(C)\cdot\frac12(a+b+\len(C))
\\
  &\leq\; \frac12(2ba+2ab+4ab)=4ab=4\,\stretch(G\cutt C)
.\end{align*}

Otherwise, up to symmetry, $c_2\not\in V(A\cup B)$
but possibly $c_1\in V(A\cup B)$.
The lift $\hat A$ of $A$ in $G$ is a $C$-ear in the case $c_1\in V(A)$,
and $\hat A$ is a cycle otherwise.
The same holds for~$B$.
We define $\bar A$ to be $\hat A$ if $\hat A$ is a cycle, and 
otherwise $\bar A=\hat A\cup C_A$ where $C_A\subseteq C$ is
a shortest subpath with the same ends in $C$ as $\hat A$.
We define $\bar B$ and possibly $C_B$ analogously.
We prove by a simple case-analysis that $\bar A,\bar B$ form a one-leaping pair in $G$:
consider $P$ a connected component of $A\cap B$ (as in
Definition~\ref{def:leaping}).
The goal is to show that $P$ is a leap of $A,B$ if and only if a component of
$\bar A\cap\bar B$ corresponding to $P$ in $G$ is a leap of $\bar A,\bar B$.
If $c_1\not\in V(P)$, then a small neighborhood of $P$ in the embedding 
$G\cutt C$ is the same as in $G$, and so $P$ is a leap of $A,B$ iff $\hat P$ 
is a leap of $\bar A,\bar B$.
If $c_1$ is an internal vertex of the path $P$, then $C_A=C_B$ and again
$P$ is a leap of $A,B$ iff $\hat P\cup C_A$ is a leap of $\bar A,\bar B$.
Suppose that $c_1$ is an end of~$P$.
It might happen that $C_A\cap C_B=\emptyset$ if $P$ is a single vertex which
is {\em not} a leap of $A,B$.
Otherwise, $\bar P:=(C_A\cap C_B)\cup\hat P$ is a component of $\bar A\cap\bar B$.
Comparing a small neighborhood of $\bar P$ in $G$ to a small neighborhood of
$P$ which results by contracting $C_A\cap C_B$, we again see that
$P$ is a leap of $A,B$ iff $\bar P$ is a leap of $\bar A,\bar B$.

Since $\bar A,\bar B$ form a one-leaping pair in $G$, we conclude
with help of \eqref{eq:abrho},
\begin{align*}
\stretch(G)\;&\leq\; \len(\bar A)\cdot\len(\bar B)\leq (a+\frac12\len(C))\cdot(b+\frac12\len(C))
\\  &\leq\; (a+a)\cdot(b+b)=4ab=4\,\stretch(G\cutt C).\tag*{\qedhere}
\end{align*}
\smallskip
\end{proof}

\section{Breakdown of the proof of Theorem~\ref{thm:main-overview}}\label{sec:mainresults}

In this section we shall state the results leading to the proof of
Theorem~\ref{thm:main-overview}, which is given in Section~\ref{sub:mainproof}.
The proofs of (most of) these
statements are long and technical, and so they are deferred to the later
sections of the paper.

We start by telling the overall (and so far only rough) ``big picture'' of our arguments.
Here we use the following notation.
For functions $f,g$ we write $f(x)\preceq_{c}g(x)$ if, for all given $x$, it holds
$f(x)\leq K_c\cdot g(x)$ where $K_c$ is a constant depending on~$c$.
Then, for any integers $g,\Delta$ and every graph $G$ of maximum degree
$\Delta$ and with a sufficiently dense embedding in $\Sigma_g$, 
we show the following chain of estimates
\begin{align}\label{eq:bigpicture}
\stretch(G^*) ~\preceq_{g,\Delta}~ \Tex(G) 
	~\preceq_1~ \crg(G) ~\preceq_g~ \text{\sl Pcost}(G^*)
	~\preceq_g~ \stretch(G_1^*)
\end{align}
where $\text{\sl Pcost}(G^*)$ is the {\em cost} of some planarizing sequence
of $G^*$---as defined further in Definition~\ref{def:good-planarizing},
and $G_1$ is a suitable subgraph of~$G$ which again has a sufficiently dense
embedding in some surface 
(an embedding derived from $G$, but not necessarily in $\Sigma_g$).

Since the chain \eqref{eq:bigpicture} can be ``closed'' by implied
$\stretch(G_1^*) \preceq_{g,\Delta} \Tex(G_1) \leq \Tex(G)$, these
estimates will immediately lead to a
full proof Theorem~\ref{thm:main-overview}(a) in Theorem~\ref{thm:main-parta}.
Moreover, since $\text{\sl Pcost}(G^*)$ can be efficiently computed, this
also provides an approximation algorithm for the other quantities 
in Theorem~\ref{thm:main-dense}.

Note also the role of the subgraph $G_1$ in \eqref{eq:bigpicture}:
for instance, a graph $G$ embedded in the double torus could
have a large toroidal grid living on one of the handles, and yet 
small dual stretch due to a very small dual edge width on the other handle.
This shows that taking a suitable subembedding $G_1$
(the one which exhibits a large value of stretch in the dual)
instead of $G$ itself at the end of the chain \eqref{eq:bigpicture} 
is necessary for the claim to hold.

In the rest of the paper we prove the claimed estimates from \eqref{eq:bigpicture} in order.

\smallskip
Since we will frequently deal with dual graphs in our arguments,
we introduce several conventions in order to help comprehension.
When we add an adjective {\em dual\/} to a graph term, we mean this term in
the topological dual of the (currently considered) graph.
We will denote the faces of an embedded graph $G$ using lowercase letters,
treating
them as vertices of its dual $G^*$. As we already mentioned in Section~\ref{sub:gdes},
we use lowercase Greek letters to refer to subgraphs (cycles or paths)
of~$G^*$, and
when there is no danger of confusion, we do not formally distinguish
between a graph and its embedding.
In particular, if $\alpha\subseteq G^*$ is a dual cycle, then $\alpha$ also refers
to the loop on the surface determined by the embedding~$G^*$.
Finally, we will denote by $\ewnd(G):=\ewn(G^*)$ the nonseparating edge-width
of the dual~$G^*$ of~$G$, and by $\stretchd(G):=\stretch(G^*)$
the dual stretch of~$G$.

\subsection{Estimating the toroidal expanse}
\label{sub:toridalexp}

Recall that we have already seen the relation 
$\Tex(G)\preceq_1\!\crg(G)$ in Corollary~\ref{cor:crossing-texp}.
In this section we finish the left-hand side of \eqref{eq:bigpicture},
namely the estimate ``$\stretch(G^*)\preceq_{g,\Delta}\Tex(G)\preceq_1\crg(G)$''.

We first give some basic lower bound estimates for the toroidal
expanse of graphs in the torus. These estimates ultimately rely on
the following basic result, which appears to be of independent
interest. Loosely speaking, it states that if a graph has two
collections of cycles that mimic the topological properties of the
cycles that build up a $p\times q$-toroidal grid, then the graph does
contain such a grid as a minor. We say that a pair $(C,D)$ of
curves in the torus is a {\em basis} (for the fundamental group) if
there are no integers $m,n$ such that $C^m$ is homotopic to $D^n$.

\begin{thm}
\label{thm:two-cycle-families}
Let $G$ be a graph embedded in the torus.
Suppose that $G$ contains a collection $\{C_1,\dots,C_p\}$ of $p\geq3$ pairwise disjoint, pairwise
homotopic cycles, and a collection
$\{D_1,\dots,D_q\}$ of $q\geq3$ pairwise
disjoint, pairwise homotopic cycles. Further suppose that the pair
$(C_1,D_1)$ is a basis.
Then $G$ contains a $p\times q$-toroidal grid as a minor.
\end{thm}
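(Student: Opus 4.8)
The goal is to recover a full $p\times q$ toroidal grid minor from two families of pairwise disjoint, pairwise homotopic cycles $\{C_i\}$ and $\{D_j\}$ whose base members span the fundamental group. The first step is to normalize the two families so that they actually look like the ``meridian/longitude'' decomposition of $C_p\Box C_q$. Since the $C_i$ are pairwise disjoint and pairwise homotopic noncontractible curves in the torus, they separate the torus into $p$ cylindrical ``bands,'' and by the basis assumption the $D_j$ cross the homotopy class of the $C_i$'s; an innermost-arc / minimality argument lets me assume (after possibly passing to submaps and contracting) that each $D_j$ meets each band in a single arc, so each $D_j$ crosses each $C_i$ exactly once. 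Dually, each $C_i$ meets each $D_j$ exactly once. The hard part of this step is that \emph{a priori} a $D_j$ could wind many times or backtrack across a band; I would handle this by replacing each $D_j$ with a sub-curve obtained by a standard surgery (cut-and-reconnect at a repeated intersection with some $C_i$, keeping the component that is still non-separating and still a basis partner of $C_1$ — this is exactly the kind of $3$-path / innermost argument that the paper foreshadows via Lemma~\ref{lem:3pp}), so minimality of total intersection forces the single-crossing pattern.

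\textbf{Building the grid.} Once the families are in this normalized position, the union $\bigcup_i C_i \cup \bigcup_j D_j$ is a subgraph $H\subseteq G$ whose intersection pattern is exactly that of $C_p\Box C_q$: the intersection points $v_{ij}=C_i\cap D_j$ form a $p\times q$ array, consecutive $v_{ij},v_{i,j+1}$ are joined along $C_i$ by an internally disjoint path, and consecutive $v_{ij},v_{i+1,j}$ are joined along $D_j$ by an internally disjoint path, with all these paths pairwise internally disjoint (disjointness within a family is given; disjointness across the two families is exactly the single-crossing normalization). Contracting each connecting path to an edge and suppressing everything else turns $H$ into a subdivision-after-contraction of $C_p\Box C_q$, i.e.\ $G$ contains $C_p\Box C_q$ as a minor. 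The cyclic order ``wraps around'' correctly precisely because the $C_i$ are \emph{all} homotopic (so band $p$ is glued back to band $1$) and likewise for the $D_j$; this is where the ``pairwise homotopic'' hypothesis, as opposed to merely ``pairwise disjoint and noncontractible,'' is essential.

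\textbf{Main obstacle.} I expect the real work to be the normalization step — showing that the two families can be isotoped/modified inside $G$ (not just in the abstract torus) so that each $C_i$ meets each $D_j$ in exactly one point, while staying inside $G$ as genuine subgraphs. The subtlety is that we are not allowed free homotopies of curves: we must stay in the combinatorial graph $G$. The remedy is to argue combinatorially: take the families minimizing $\sum_{i,j}|C_i\cap D_j|$ (number of leaps), and show any band in which some $D_j$ has two leaps with some $C_i$ yields, via a switching-ear/surgery exchange, a strictly smaller configuration still satisfying the hypotheses — here the leap machinery of Definition~\ref{def:leaping} and the $3$-path condition (Lemma~\ref{lem:3pp}) do the bookkeeping, and the basis condition is what guarantees the surgered curve is not nullhomotopic or homotopic into a single $C_i$. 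A secondary, more routine point is to verify that after the contractions the resulting multigraph really has no loops or doubled edges obstructing the grid structure, which follows from $p,q\geq 3$.
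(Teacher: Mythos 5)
Your overall strategy---normalize the two families by minimality/surgery exchanges until their intersection pattern is that of the grid, then contract---is the same strategy the paper follows, but the step you yourself flag as the ``main obstacle'' is where the proposal has a genuine gap, and it is precisely the part that occupies most of the paper's proof. Concretely: when you remove a backtrack or excess winding of some $D_j$ by cutting and reconnecting along an arc of some $C_i$, the new curve necessarily contains a portion of $C_i$, and that portion is in general crossed by other curves $D_m$ of the family. So the ``strictly smaller configuration'' produced by your exchange need not satisfy the hypotheses: pairwise disjointness of the $D$-family is destroyed, and the quantity you minimize, $\sum_{i,j}|C_i\cap D_j|$, does not see $D$--$D$ intersections at all, so the minimality argument does not close. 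A second coordination problem of the same kind arises in the winding-reduction step: if $D_j$ has homotopy type $\alpha^r\beta^s$ with $s\geq 2$, then no curve homotopic to $D_j$ meets each $C_i$ exactly once, so you must replace the $D_j$'s by cycles in a different homotopy class; if these surgeries are performed independently (each $D_j$ choosing its own shortcut along some $C_i$), the replacements can come out pairwise non-homotopic (different $\alpha$-powers) and hence pairwise crossing. Neither Lemma~\ref{lem:3pp} nor the basis hypothesis repairs this: they control the homotopy/parity of the individual surgered curve, not disjointness and homotopy within the family.

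The paper's proof is organized exactly around these two difficulties. It first modifies the $C$-family, not the $D$-family (Claim~\ref{clm_ear}, minimizing $|E(C_+)\setminus E(D_+)|$): rerouting $C_i$ along a $D_+$-ear keeps the $C$'s pairwise disjoint because the ear's interior avoids $C_+$. For the zigzag removal it must then relax ``pairwise disjoint cycles'' to ``quasigood collections of quasicycles,'' which are allowed to touch one another in a controlled, one-sided way, and it minimizes the total rank of such a collection (Claim~\ref{clm_quasi}); only this relaxation makes the reroute-along-$C_i$ exchange legal. Finally, the winding reduction (Claim~\ref{clm_wonce}) is one coordinated construction: all shortcuts are anchored at a single ear $T_1$ of $D_1'$ and at the intersections $P_j$ of the remaining curves with the first new cycle $D_1''$, and this coordination is what guarantees that $D_1'',\dots,D_q''$ are pairwise disjoint and pairwise homotopic. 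A smaller point: the normalized curves meet each $C_i$ in a connected path rather than a single vertex, which is why the paper contracts the intersections $C_i\cap D_j''$; ``crosses exactly once in a point,'' as you state it, is not achievable in general, though this is easy to accommodate. To turn your sketch into a proof you would need to supply machinery playing the role of the quasigood collections and of the coordinated shortcutting; that is the real content of the theorem.
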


We prove this statement in Section~\ref{sec:grids}.

In the torus, $\ewn(G)=\ew(G)$ and so by Lemma~\ref{lem:ewdfw} we have
$\fw(G)\geq\frac{\ewnd(G)}{\lfloor\Delta(G)/2\rfloor}$.
Hence, for instance, one can formulate Theorem~\ref{thm:deGS} in terms of
nonseparating dual edge-width.
Along these lines we shall derive the following as a consequence of 
Theorem~\ref{thm:two-cycle-families}; its proof is also in Section~\ref{sec:grids}:

\begin{thm}
\label{thm:agrid-torus}
Let $G$ be a graph embedded in the torus and
assume $k:=\ewnd(G)\geq5\lfloor\Delta(G)/2\rfloor$.
If there exists a dual cycle $\alpha\subseteq G^*$ of length $k$ 
such that a shortest $\alpha$-switching dual ear has length~$\ell$
(recall from Lemma~\ref{lem:kl2} that $\ell\geq k/2$),
then $G$ contains as a minor the toroidal grid of size
$$
\left\lceil\frac \ell{\lfloor\Delta(G)/2\rfloor}\right\rceil
		\>\times\>
	\left\lfloor\frac23\left\lceil
		\frac k{\lfloor\Delta(G)/2\rfloor}
        \right\rceil\right\rfloor
\,.\smallskip$$
\end{thm}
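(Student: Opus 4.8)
The plan is to deduce this from Theorem~\ref{thm:two-cycle-families} by manufacturing, inside the primal graph $G$, the two required families of pairwise disjoint, pairwise homotopic cycles out of the dual data $\alpha$ and its shortest $\alpha$-switching ear. First I would take the dual cycle $\alpha\subseteq G^*$ of length $k=\ewnd(G)$ and the shortest $\alpha$-switching dual ear $\delta$ of length $\ell$. The cycle $\alpha$ and (a suitable closed-up version of) $\delta$ are two dual cycles meeting transversely in exactly one point, i.e.\ they are one-leaping and form a basis of the torus; the key point is that after performing the ``cutting along $\alpha$'' surgery, $G\cutt\alpha$ is a \emph{cylindrical} (planar) embedding of a subgraph of $G$, and the $\alpha$-switching behaviour of $\delta$ tells us that $\delta$ reappears as an arc crossing this cylinder from one boundary circle to the other.

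Next I would extract the $C_i$ family. In the cylindrical embedding $G\cutt\alpha$ the two boundary circles correspond to the two ``sides'' of $\alpha$; the concentric structure of the cylinder gives a nested sequence of closed curves separating one boundary from the other, and routing each such separating curve through the embedded graph rather than through faces costs a factor of $\lfloor\Delta(G)/2\rfloor$ (this is exactly the relation $\fw\ge\ew(G^*)/\lfloor\Delta/2\rfloor$ used in reverse, applied face-by-face). Because $\alpha$ has length $k$, the cylinder has ``width'' $\ceil{k/\lfloor\Delta(G)/2\rfloor}$ in terms of primal vertex-disjoint concentric cycles, of which one can keep $\floor{\tfrac23\ceil{k/\lfloor\Delta(G)/2\rfloor}}$ — the $2/3$ loss is the de~Graaf--Schrijver phenomenon (Theorem~\ref{thm:deGS}), and invoking it in the cut cylinder is where the hypothesis $k\ge 5\lfloor\Delta(G)/2\rfloor$ is needed so that the relevant face-width is at least $5$. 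These are the $D_1,\dots,D_q$ with $q=\floor{\tfrac23\ceil{k/\lfloor\Delta(G)/2\rfloor}}$, all homotopic to the core of the cylinder, i.e.\ to the primal curve ``parallel to $\alpha$''. For the $C_i$ family I would do the dual of this argument along $\delta$: the shortest $\alpha$-switching dual ear has length $\ell$, so cutting along (the closure of) $\delta$ produces a second cylinder of dual width $\ell$, yielding $p=\ceil{\ell/\lfloor\Delta(G)/2\rfloor}$ pairwise disjoint pairwise homotopic primal cycles in the transverse direction; here minimality of $\ell$ (so that no shortcut shortens the ear) prevents these cycles from collapsing, and no $2/3$ loss occurs because one boundary of this cylinder is a single point (the contracted $\alpha$), so the cylinder is really a disc-with-a-hole and all $\ell$ layers survive.

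Finally, since $\alpha$ and $\delta$ form a basis, the curve parallel to $\alpha$ and the curve transverse to it form a basis of the fundamental group of the torus, so the $C_i$'s and $D_j$'s are homotopically ``independent'' in the sense required, and Theorem~\ref{thm:two-cycle-families} delivers the $p\times q$ toroidal grid minor with the stated dimensions. The main obstacle I anticipate is the bookkeeping in the second paragraph: one must verify carefully that the disjoint concentric dual cycles in each cut cylinder can be realised by \emph{vertex-disjoint primal cycles} with only the claimed $\lfloor\Delta(G)/2\rfloor$ blow-up and the claimed $2/3$ factor, and that after cutting, the graph really is cylindrical (not just planar with the holes in the wrong configuration) so that ``concentric'' makes sense and Theorem~\ref{thm:deGS} applies — in particular checking that the face-width of the cut graph, measured appropriately, is at least $5$ exactly when $k\ge 5\lfloor\Delta(G)/2\rfloor$. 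The rest is routine once the cylinder picture and the homotopy basis are pinned down.
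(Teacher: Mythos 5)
Your overall plan---manufacture two families of disjoint, pairwise homotopic cycles and feed them to Theorem~\ref{thm:two-cycle-families}---is the paper's plan, and your first tool (cut along $\alpha$, measure the resulting cylinder by the cheapest transversal arc, convert via Menger into disjoint cycles) is also the right one. But you have attached the two length parameters to the wrong families, and the justifications break down exactly there. The number of pairwise disjoint ``concentric'' cycles in the cylinder $G\cutt\alpha$ (i.e.\ cycles homotopic to $\alpha$) is governed by the \emph{height} of that cylinder, namely by the least number of vertices met by an arc joining its two rims, and a cheap such arc would yield an $\alpha$-switching dual ear shorter than $\ell$; so this family has size at least $\lceil\ell/\lfloor\Delta(G)/2\rfloor\rceil$, not ``$\lceil k/\lfloor\Delta(G)/2\rfloor\rceil$ because $\alpha$ has length $k$'' --- the length of $\alpha$ is the \emph{circumference} of the cylinder and controls nothing about how many parallels fit. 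Relatedly, Theorem~\ref{thm:deGS} is a statement about graphs embedded in the torus with face-width at least $5$; invoking its $2/3$ loss ``in the cut cylinder'' has no meaning, and in a cylinder no such loss occurs (Menger gives all the layers).

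The second family is where the argument genuinely fails. You claim $\lceil\ell/\lfloor\Delta(G)/2\rfloor\rceil$ pairwise disjoint cycles transverse to $\alpha$, obtained from ``a second cylinder of dual width $\ell$'' after cutting along the closed-up ear. This is false in general: take $G=C_k\Box C_h$ with $h\gg k$, embedded in the standard way, so that $\alpha$ is a dual cycle of length $k$ in the short direction and $\ell\approx h$. Every cycle transverse to $\alpha$ must use one of the $k$ edges crossed by $\alpha$, so there are at most $k$ pairwise disjoint transverse cycles, far fewer than $\lceil\ell/2\rceil\approx h/2$. (Also, cutting along a dual cycle yields a cylinder with two rims; nothing gets ``contracted to a point''.) What is missing is a mechanism for producing the second family, of size $\lfloor\frac23\lceil k/\lfloor\Delta(G)/2\rfloor\rceil\rfloor$, in a homotopy class that forms a basis with the class of $\alpha$. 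The paper gets this by applying Theorem~\ref{thm:deGS} to $G$ itself on the torus (using $\fw(G)\geq\ewnd(G)/\lfloor\Delta(G)/2\rfloor\geq5$, which is where the hypothesis $k\geq5\lfloor\Delta(G)/2\rfloor$ enters): the resulting grid minor supplies \emph{two} families $\dd,\ee$ in classes forming a basis with each other, and since the $\cc$ family homotopic to $\alpha$ cannot fail to form a basis with both, one of $\dd,\ee$ can be paired with $\cc$ in Theorem~\ref{thm:two-cycle-families}. Without this (or some substitute controlling the homotopy class of the transverse family), your construction does not deliver the stated grid.
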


Hence the toroidal expanse of $G$ is at least 
$\big\lceil\frac \ell{\lfloor\Delta(G)/2\rfloor}\big\rceil
\cdot\big\lfloor\frac23\lceil\frac k{\lfloor\Delta(G)/2\rfloor}\rceil\big\rfloor$.
On the other hand, 
by Lemma~\ref{lem:ewdfw} and Theorem~\ref{thm:deGS} it follows that
the toroidal expanse of $G$ is at least 
$\big\lfloor\frac23\big\lceil\frac
k{\lfloor\Delta(G)/2\rfloor}\big\rceil\big\rfloor^2$. Therefore
our estimate becomes useful roughly whenever $\ell>\frac23k$. Now by
Lemma~\ref{lem:thstr} (applied to~$G^*$), we have $\stretch^*(G) \le
k\cdot(\ell + k/2)$, and so  $\ell>\frac23k$ whenever $\stretch^*(G)
> \frac{7}{6}k^2$. 

Moreover, Theorem~\ref{thm:agrid-torus} can be reformulated 
in terms of $\stretch^*(G)$ (instead of ``$\ell\cdot k$''). This
reformulation is important for the general estimate on the toroidal
expanse of $G$:

\begin{corollary}
\label{cor:agrid-torus}
Let $G$ be a graph embedded in the torus with
$\ewnd(G)\geq5\lfloor\Delta(G)/2\rfloor$. Then
$$
\Tex(G)\>\geq\> \frac{2}{7}\,
	\big\lfloor{\Delta(G)}/2\big\rfloor^{-2} \cdot\stretchd(G)
	\>\geq\> \frac87\Delta(G)^{-2} \cdot\stretchd(G)
\,.$$
Furthermore, for any $\varepsilon>0$ there is a
$k_0:=k_0(\Delta,\varepsilon)$ such that if  $\ewnd(G)> k_0$, then
$\Tex(G)\geq(\frac8{21}-\varepsilon)\cdot
	\lfloor{\Delta(G)}/2\rfloor^{-2}\cdot\stretchd(G)$.
\end{corollary}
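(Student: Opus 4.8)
The plan is to deduce Corollary~\ref{cor:agrid-torus} directly from Theorem~\ref{thm:agrid-torus} by combining two lower bounds on $\Tex(G)$ and bounding $\stretchd(G)$ from above via Lemma~\ref{lem:thstr}. Write $h:=\lfloor\Delta(G)/2\rfloor$, $k:=\ewnd(G)$, and let $\ell$ be the parameter from Theorem~\ref{thm:agrid-torus} (so $\ell\ge k/2$). Applying Lemma~\ref{lem:thstr} to the dual graph $G^*$ with the optimal dual cycle $\alpha$ of length $k$ and its shortest switching ear of length $\ell$, we get $\stretchd(G)\le k\cdot(\ell+\tfrac12 k)$. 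The first bound comes from Theorem~\ref{thm:agrid-torus}: since $k\ge 5h$ the hypothesis is met, so $\Tex(G)\ge\lceil \ell/h\rceil\cdot\lfloor\tfrac23\lceil k/h\rceil\rfloor\ge (\ell/h)\cdot(\tfrac23\cdot\tfrac{k}{h}\cdot c)$ for an appropriate constant absorbing the floor/ceiling losses; the second bound comes from Theorem~\ref{thm:deGS} via $\fw(G)\ge k/h$, giving $\Tex(G)\ge\lfloor\tfrac23\lceil k/h\rceil\rfloor^2$, i.e.\ roughly $(\tfrac23 k/h)^2$.

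Next I would take the larger of these two estimates and optimize against the worst case of $\stretchd(G)\le k(\ell+k/2)$. Parametrize $\ell=tk$ with $t\in[\tfrac12,1]$ (note $\ell\le k$ since $\alpha$ itself has length $k$, so an $\alpha$-switching ear cannot be forced longer than… actually one should just use $\ell\le$ whatever the structure gives and keep $\ell$ as a free variable $\ge k/2$). Then $\stretchd(G)\le k^2(t+\tfrac12)$, the first bound gives $\Tex(G)\gtrsim \tfrac23 t k^2/h^2$, and the second gives $\Tex(G)\gtrsim \tfrac49 k^2/h^2$. So $\Tex(G)/\stretchd(G)\gtrsim h^{-2}\max\{\tfrac23 t,\tfrac49\}/(t+\tfrac12)$. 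The function $\max\{\tfrac23 t,\tfrac49\}/(t+\tfrac12)$ on $t\in[\tfrac12,1]$ is minimized where the two expressions in the max cross, namely at $t=\tfrac23$, where both the first-bound branch and the crossover value equal $\tfrac23\cdot\tfrac23/(\tfrac23+\tfrac12)=\tfrac49/\tfrac76=\tfrac{8}{21}$. This already explains the constant $\tfrac{8}{21}$ appearing in the ``furthermore'' clause; the slightly weaker constant $\tfrac27$ in the main inequality is what survives after honestly carrying the $\lceil\cdot\rceil,\lfloor\cdot\rfloor$ roundings rather than replacing them by the clean ratios.

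For the first (non-asymptotic) inequality I would therefore redo the above with the floors and ceilings in place, using $\lceil x\rceil\ge x$, $\lfloor\tfrac23\lceil x\rceil\rfloor\ge\lfloor\tfrac23 x\rfloor\ge\tfrac23 x-1$, and the hypothesis $k\ge 5h$ (so $k/h\ge5$) to control the relative size of the ``$-1$'' error terms; one checks that for $k/h\ge 5$ the product $\lceil \ell/h\rceil\lfloor\tfrac23\lceil k/h\rceil\rfloor$ is at least $\tfrac27\cdot\stretchd(G)/h^2$ in the regime $\ell\le\tfrac23 k$ by using instead the de~Graaf--Schrijver bound $\lfloor\tfrac23\lceil k/h\rceil\rfloor^2$, and at least $\tfrac27\cdot\stretchd(G)/h^2$ in the regime $\ell\ge\tfrac23 k$ by using Theorem~\ref{thm:agrid-torus} directly together with $\stretchd(G)\le k(\ell+k/2)\le k\cdot\tfrac{2}{\,}\ell\cdot\tfrac{7}{?}$ — concretely $t+\tfrac12\le\tfrac{3}{2}t$ for $t\ge1$… here one must be careful since $t\ge\tfrac23$ only gives $t+\tfrac12\le\tfrac74 t$, which is exactly where the $\tfrac27=\tfrac{8}{21}\cdot\tfrac{?}{?}$ degradation enters. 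The second inequality $\tfrac27\lfloor\Delta/2\rfloor^{-2}\ge\tfrac87\Delta^{-2}$ is immediate from $\lfloor\Delta/2\rfloor\le\Delta/2$. The ``furthermore'' clause follows by letting $k\to\infty$: for $k/h>k_0/h$ large enough, $\lceil x\rceil=(1+o(1))x$ and $\lfloor\tfrac23\lceil x\rceil\rfloor=(1+o(1))\tfrac23 x$, so the rounding losses shrink below $\varepsilon$ and the clean optimum $\tfrac{8}{21}$ is approached.

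The main obstacle, such as it is, is purely bookkeeping: organizing the two-regime case analysis ($\ell\lessgtr\tfrac23 k$, i.e.\ when Theorem~\ref{thm:agrid-torus} beats Theorem~\ref{thm:deGS} or not) and tracking how much of the clean ratio $\tfrac{8}{21}$ is eaten by the ceilings and floors under only the modest hypothesis $k\ge 5h$ — this is what forces the honest constant down to $\tfrac27$. There is no conceptual difficulty: both the grid-existence input (Theorem~\ref{thm:agrid-torus}, hence ultimately Theorem~\ref{thm:two-cycle-families}) and the stretch upper bound (Lemma~\ref{lem:thstr} applied to $G^*$) are already in hand, and everything else is elementary inequality manipulation plus an elementary one-variable optimization.
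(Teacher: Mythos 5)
Your route is essentially the paper's: combine Theorem~\ref{thm:agrid-torus} (and Theorem~\ref{thm:deGS}) with the upper bound $\stretchd(G)\le k(\ell+\tfrac12 k)$ from Lemma~\ref{lem:thstr} applied to $G^*$, split at $\ell=\tfrac23k$ for the asymptotic statement, and the optimization at $t=\tfrac23$ giving $\tfrac8{21}$ is exactly the paper's computation. Two quantitative slips in your bookkeeping, though, mean the constants you claim do not follow from precisely what you wrote. First, in the regime $\ell\le\tfrac23k$ you propose to rely on the de~Graaf--Schrijver bound; with $r=\lceil k/h\rceil$, $h=\lfloor\Delta(G)/2\rfloor$, that gives $\Tex(G)\ge\lfloor\tfrac23r\rfloor^2\ge\tfrac{16}{49}(k/h)^2$ against $\stretchd(G)\le\tfrac76k^2$, i.e.\ only $\tfrac{96}{343}<\tfrac27$. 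In fact no case split is needed for the first inequality: the Theorem~\ref{thm:agrid-torus} bound alone, $\Tex(G)\ge\tfrac47 k\ell h^{-2}$, combined with $\stretchd(G)\le 2k\ell$ (valid since $\ell\ge k/2$), yields $\tfrac27$ directly, and its worst case is $\ell=k/2$, not the crossover point. Second, the generic rounding estimate $\lfloor\tfrac23\lceil x\rceil\rfloor\ge\tfrac23x-1$ with $k/h\ge5$ only gives a factor $\tfrac7{15}$ in place of $\tfrac47$, hence $\tfrac7{30}$ overall; to reach $\tfrac27$ you need the sharper integer fact used in the paper, namely $\lfloor 2r/3\rfloor\ge\tfrac47 r$ for all integers $r\ge5$ (tight at $r=7$). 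Both issues are repairable entirely within your framework, since you do have the Theorem~\ref{thm:agrid-torus} estimate in hand and can take the maximum of the two bounds; the asymptotic $(\tfrac8{21}-\varepsilon)$ clause is fine as sketched, using the sharper $\stretchd(G)\le\tfrac74k\ell$ when $\ell\ge\tfrac23k$ and Theorem~\ref{thm:deGS} otherwise.
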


For the proof of this statement, we again refer to Section~\ref{sec:grids}.

\smallskip
Stepping up to orientable surfaces of genus $g>1$, we 
can now easily derive the general estimate
``$\stretch(G^*)\preceq_{g,\Delta}\Tex(G)$'' of \eqref{eq:bigpicture}
from the previous results:
\begin{corollary}
\label{cor:agrid-all}
Let $G$ be a graph embedded in the surface $\Sigma_g$,
such that $\ewnd(G)\geq5\cdot2^{g-1}\lfloor\Delta(G)/2\rfloor$.
Then 
\begin{equation}
\Tex(G)\>\geq\> \frac{1}{7}\,2^{3-2g}
	\big\lfloor{\Delta(G)}/2\big\rfloor^{-2} \cdot\stretchd(G)
\,.\end{equation}
\end{corollary}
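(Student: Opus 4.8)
\textbf{Proof plan for Corollary~\ref{cor:agrid-all}.}
The plan is to argue by induction on the genus $g$, using the torus case (Corollary~\ref{cor:agrid-torus}) as the base and the two cutting lemmas (Lemmas~\ref{lem:dew2} and~\ref{lem:str4}, applied in the dual) for the inductive step. First I would set up the induction hypothesis so that it matches the claimed bound: assume that every graph $H$ embedded in $\Sigma_{g-1}$ with $\ewnd(H)\geq 5\cdot2^{g-2}\lfloor\Delta(H)/2\rfloor$ satisfies $\Tex(H)\geq\frac27\,4^{2-g}\lfloor\Delta(H)/2\rfloor^{-2}\cdot\stretchd(H)$. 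For the step, given $G$ embedded in $\Sigma_g$ with $\ewnd(G)\geq 5\cdot2^{g-1}\lfloor\Delta(G)/2\rfloor$, take a shortest nonseparating cycle $\gamma$ in the dual $G^*$ (length $\ewnd(G)$) and cut along it to form $G':=G\cutt\gamma$, which is embedded in $\Sigma_{g-1}$; equivalently, cut $G^*$ through $\gamma$, since $(G^*\cutt\gamma)^* = G\cutt\gamma$. Crucially $\Delta(G')\leq\Delta(G)$, because cutting along a dual cycle only splits faces and never raises vertex degrees in $G$ (it deletes the severed edges).

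The second step is to verify that the three quantities behave correctly under the cut. Applying Lemma~\ref{lem:dew2} to $G^*$ gives $\ewnd(G')=\ewn((G^*)\cutt\gamma)\geq\frac12\ewn(G^*)=\frac12\ewnd(G)\geq 5\cdot2^{g-2}\lfloor\Delta(G)/2\rfloor\geq 5\cdot2^{g-2}\lfloor\Delta(G')/2\rfloor$, so $G'$ is eligible for the induction hypothesis. Applying Lemma~\ref{lem:str4} to $G^*$ gives $\stretchd(G')=\stretch((G^*)\cutt\gamma)\geq\frac14\stretch(G^*)=\frac14\stretchd(G)$. Finally, since cutting a surface-embedded graph along a dual cycle produces a minor relationship in the primal (deleting edges and identifying/pasting faces does not destroy grid minors living elsewhere), one has $\Tex(G)\geq\Tex(G')$; this should be spelled out from the definition of $G\cutt\gamma$ and the ``lift'' correspondence of Section~\ref{sub:gdes}, noting that any toroidal grid minor model in $G'$ lifts to one in $G$.

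Stringing these together: $\Tex(G)\geq\Tex(G')\geq\frac27\,4^{2-g}\lfloor\Delta(G')/2\rfloor^{-2}\cdot\stretchd(G')\geq\frac27\,4^{2-g}\lfloor\Delta(G)/2\rfloor^{-2}\cdot\frac14\stretchd(G)=\frac27\,4^{1-g}\lfloor\Delta(G)/2\rfloor^{-2}\cdot\stretchd(G)$, which is exactly the asserted bound; the second inequality in the statement is just $\lfloor\Delta/2\rfloor\leq\Delta/2$, giving $\lfloor\Delta/2\rfloor^{-2}\geq 4\Delta^{-2}$ and hence the factor $\frac17\,2^{5-2g}$. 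The base case $g=1$ is precisely Corollary~\ref{cor:agrid-torus} (its first inequality), and the hypothesis $5\cdot2^{g-1}\lfloor\Delta/2\rfloor$ reduces to $5\lfloor\Delta/2\rfloor$ there.

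\textbf{Main obstacle.} The genuinely nontrivial point is the monotonicity $\Tex(G)\geq\Tex(G\cutt\gamma)$: one must check carefully that cutting along a dual cycle, which deletes a set of primal edges and then pastes discs, does not sever a toroidal grid minor model that is entirely contained in the ``rest'' of the embedding. Because the dual cycle $\gamma$ is short (length $\ewnd(G)$) this should be intuitively clear, but making it rigorous requires describing how a branch set / minor model in $G'$ pulls back along the lift map to a valid minor model in $G$ --- essentially observing that $E(G\cutt\gamma)\subseteq E(G)$ and that the lift of a connected subgraph is connected with the same vertex set (plus possibly merging $c_1,c_2$ back into the contracted curve, which only helps connectivity). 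Everything else is bookkeeping with the two already-proved cutting lemmas and the arithmetic of the constants.
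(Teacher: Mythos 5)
Your proposal is correct and matches the paper's (only sketched) argument exactly: induction on $g$ with base case Corollary~\ref{cor:agrid-torus}, cutting through a shortest nonseparating dual cycle and applying Lemmas~\ref{lem:dew2} and~\ref{lem:str4} in the dual, then the stated arithmetic. Your "main obstacle" is in fact immediate, since cutting along a dual cycle only deletes primal edges ($V(G\cutt\gamma)=V(G)$, $E(G\cutt\gamma)\subseteq E(G)$), so $G\cutt\gamma$ is a subgraph of $G$ and $\Tex(G)\geq\Tex(G\cutt\gamma)$ holds trivially (the vertices $c_1,c_2$ you mention arise only in the primal cut-through operation, not here).
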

\begin{proof}
We proceed by a simple induction on~$g\geq1$.
The base case of $g=1$ is done in Corollary~\ref{cor:agrid-torus}.
Assume now some $g>1$.
Let $\alpha$ be any nonseparating dual cycle in $G^*$ of length $\ewnd(G)$,
and let $G':=G\cutt\alpha$ embedded in $\Sigma_{g-1}$.
Since $\ewnd(G')\geq\frac12\ewnd(G)\geq
	5\cdot2^{g-2}\lfloor\Delta(G)/2\rfloor$ by
Lemma~\ref{lem:dew2}, from the induction assumption we get
$$\Tex(G')\>\geq\> \frac{1}{7}\,2^{3-2(g-1)}
        \big\lfloor{\Delta(G)}/2\big\rfloor^{-2} \cdot\stretchd(G')
= \left( \frac{1}{7}\,2^{3-2g}
        \big\lfloor{\Delta(G)}/2\big\rfloor^{-2}\right) \cdot 4\,\stretchd(G')
.$$
To finish it remains to observe that $\Tex(G)\geq\Tex(G')$ since
$G'\subseteq G$, and that $\stretchd(G)\leq4\,\stretchd(G')$ by
Lemma~\ref{lem:str4}.
\end{proof}

\subsection{Algorithmic upper estimate for higher surfaces}

It remains to tackle the right-hand side of the chain \eqref{eq:bigpicture},
that is, to argue that
``$\crg(G)\preceq_g\text{\sl Pcost}(G^*)\preceq_g\stretch(G_1^*)$''
in any fixed genus $g\geq1$.
We start with explaining the term $\text{\sl Pcost}(G^*)$, 
which refers to planarizing an embedded graph, and its historical relations.

Peter Brass conjectured
the existence of a constant $c$ such that the crossing number of a
toroidal graph on $n$ vertices is at most $c\Delta n$. This conjecture 
was proved by Pach and T\'oth~\cite{pachtoth}. Moreover, Pach and
T\'oth showed that for every orientable surface $\Sigma$ there is a constant
$c_\Sigma$ such that the crossing number of an $n$-vertex graph embeddable on
$\Sigma$ is at most $c_\Sigma \Delta n$; this result was extended to any
surface by B{\"o}r{\"o}czky, Pach, and T\'oth~\cite{BPT06}. The
constant $c_\Sigma$ proved in these papers is exponential in the
genus of $\Sigma$. This
was later refined by Djidjev
and Vrt'o~\cite{DV12}, who decreased the bound to $\ca O(g\Delta n)$, and
proved that this is tight within a constant factor.

At the heart of these results lies the technique of (perhaps
recursively) cutting along a suitable 
{\em planarizing} subgraph (most naturally, a set of short cycles),
and then redrawing the missing edges without introducing too many
crossings. Our techniques and aims are of a similar spirit, although
our cutting process is more delicate, due to our need to
(eventually) find
a matching lower bound for the number of crossings in the resulting
drawing. Our cutting paradigm is formalized in the following definition.

\begin{definition}[Good planarizing sequence]
\label{def:good-planarizing}
Let $G$ be a graph embedded in the surface $\Sigma_g$.
A~sequence $(G_1,C_1),(G_2,C_2), \dots, (G_g,C_g)$ 
is called a {\em good planarizing sequence for $G$} if the following holds
for $i=1,\dots,g$, letting $G_0=G$:
\begin{itemize}
\item $G_i$ is a graph embedded in $\Sigma_{g-i}$,
\item $C_i$ is a nonseparating cycle in $G_{i-1}$ of length $\ewn(G_{i-1})$, and
\item $G_i$ results by cutting the embedding $G_{i-1}$ through $C_i$.
\end{itemize}
We associate $G$ and its planarizing sequence with the values
$\{k_i,\ell_i\}_{i=1,\ldots,g}$, where
$k_i=\len(C_i)$ and $\ell_i$ is the length of a shortest $C_i$-switching
ear in $G_{i-1}$, for $i=1,\dots,g$.
Then we may shortly denote by $\text{\sl Pcost}(G):=\max\{k_i\cdot\ell_i\}_{i=1,2,\ldots,g}$,
implicitly referring to the considered planarizing sequence.
\end{definition}

Good planarizing sequences in the dual graph can be used to provide
the estimate ``$\crg(G)\preceq_g\text{\sl Pcost}(G^*)$'' of \eqref{eq:bigpicture},
as stated precisely in the following theorem.
In regard to algorithmic aspects and runtime complexity, we emphasise that we
expect the embedded input graph $G$ to be represented by its rotation scheme,
and the output drawing to be represented by a planar graph obtained by
replacing each crossing with a new (specially marked) subdividing vertex.

\begin{theorem}
\label{thm:upper-cr}
Let $G$ be a graph embedded in $\Sigma_g$.
Let $(G_1^*,\gamma_1), \dots, (G_g^*,\gamma_g)$ be any good planarizing sequence
for the topological dual $G^*$ with associated lengths
$\{k_i,\ell_i\}_{i=1,\ldots,g}$ (Definition~\ref{def:good-planarizing}).
Then 
\begin{equation}\label{eq:uppercr}
\crg(G)\>\leq\> 3\cdot\left(2^{g+1}-2-g\right)\cdot\text{\sl Pcost}(G^*)
	= 3\cdot\left(2^{g+1}-2-g\right)\cdot\max
		 \{k_i\cdot\ell_i\}_{i=1,2,\ldots,g}
\,.\end{equation}
Furthermore, there is an algorithm that, for some good planarizing sequence
$(G_1^*,\gamma_1), \dots, (G_g^*,\gamma_g)$ of $G^*$,
produces a drawing of $G$ in the plane with at most the number of
crossings claimed in~\eqref{eq:uppercr},
and such that the subgraph $G-E(\gamma_1)\cup\dots\cup E(\gamma_g)$
(i.e., $G$ without the edges severed by this planarizing sequence)
is drawn planarly within it.
This algorithm runs in time $\ca O\big(n(\log\log n+\Delta^3)\big)$ for
fixed~$g$, with $n=|V(G)|$ and $\Delta=\Delta(G)$.
\end{theorem}

We remark that the dependence of the algorithm's runtime on $\Delta$
(which is anyway assumed bounded in our main theorems)
is necessary in Theorem~\ref{thm:upper-cr} due to the input size of $G$ and,
more importantly, due to the potential size of the output drawing.
Besides that, the only reason to have superlinear time complexity
$\ca O(n\log\log n)$ with fixed $\Delta$ is a subroutine for computing 
a shortest nonseparating cycle in graphs embedded in an orientable surface.
Strictly saying, our algorithm is also FPT with respect to the genus $g$ as a
parameter, but it can be run in overall polynomial time as well
(if the embedding of $G$ is given).
The proof of this theorem is given in Section~\ref{sec:drawing-upper}.

\subsection{Bridging the approximation gap}

Let us briefly revise where we stand now with respect to the big picture
given in \eqref{eq:bigpicture}.
We have already proved all the inequalities of it except the last one
``$\text{\sl Pcost}(G^*)\preceq_g\stretch(G_1^*)$''.
It may appear that our next task is to bridge the gap by simply proving that
$\stretchd(G) = \Omega(\text{\sl Pcost}(G^*))$.
Unfortunately, no such statement is true in general.
We need to find a way around this difficulty, namely, by restricting
to a suitable subgraph of~$G$. 
The following key technical claim gets us closely to the desired estimate.

\begin{lemma}
\label{lem:kl-to-stretch}
Let $H$ be a graph embedded in the surface $\Sigma_g$.
Let $k:=\ewnd(H)$ and assume $k\geq2^g$.
Let $\ell$ be the largest integer such that
there is a cycle $\gamma$ of length $k$ in $H^*$ whose shortest
$\gamma$-switching ear has length~$\ell$.
Then there exists an integer $g'$, $0< g'\leq g$, and a subgraph $H'$
of $H$ embedded in $\Sigma_{g'}$ such that 
$$
\ewnd(H')\geq 2^{g'-g}k
	\qquad\mbox{and}\qquad
\stretchd(H')\geq 2^{2g'-2g}\cdot k\ell
\,.$$
\end{lemma}

In a nutshell, the main idea behind the proof of this statement is to
cut along handles that (may) cause small stretch, until we arrive to
the desired toroidal $\Omega(k\times\ell)$ grid.
In particular, the claimed embedding of $H'$ is inherited from that of~$H$.

The arguments required to prove Lemma~\ref{lem:kl-to-stretch} span three sections. 
In Section~\ref{sec:more} we establish several
simple results on the stretch of an embedded graph. As we believe this
new parameter may be of independent interest, it makes sense to gather
these results in a standalone section for possible further
reference. The whole proof of Lemma~\ref{lem:kl-to-stretch} is then
presented in Sections~\ref{sec:finding} and~\ref{sec:findingB}.

Using Lemma~\ref{lem:kl-to-stretch} as the last missing ingredient,
we may now informally wrap up the whole chain of estimates 
\eqref{eq:bigpicture} as follows
$$
\text{\sl Pcost}(G^*) ~\preceq_g~ \stretchd(H')
	~\preceq_{g,\Delta}~ \Tex(H') ~\leq~ \Tex(G)
,$$
where a subgraph $H'\subseteq G$ is found with help of Lemma~\ref{lem:kl-to-stretch}, 
and $H'$ now stands for $G_1$ from~\eqref{eq:bigpicture}.
We remark that $H'$ indeed has a sufficiently dense 
embedding for the middle inequality to hold.
Formally, we are now proving:
\begin{lemma}
\label{lem:kl-to-tex}
Let $G$ be a graph embedded in $\Sigma_g$. Let 
 $(G_i^*,\gamma_i)\,_{i=1,\ldots,g}$ 
be a good planarizing sequence of $G^*$, 
with associated lengths $\{k_i,\ell_i\}_{i=1,\ldots,g}$.
Suppose that $\ewnd(G)\geq5\cdot2^{g-1}\lfloor\Delta(G)/2\rfloor$.
There exists $g'$, $0< g'\leq g$, and a subgraph $H'$
of $G$ embedded in $\Sigma_{g'}$ such that
$\ewnd(H')\geq5\cdot2^{g'-1}\lfloor\Delta(G)/2\rfloor$ and
$$
\Tex(G)	\>\geq\> 
 \frac{1}{7}\,2^{3-2g'}
        \big\lfloor{\Delta(G)}/2\big\rfloor^{-2} \cdot\stretchd(H')
  \>\geq\> \frac{1}{7}\,2^{3-2g}
        \big\lfloor{\Delta(G)}/2\big\rfloor^{-2}
	\cdot \max \{k_i\cdot\ell_i\}_{i=1,2,\dots,g}
\,.$$
Consequently,
$$~~~~
\crg(G)	\>\geq\> 
\frac{1}{21}\,2^{1-2g}
        \big\lfloor{\Delta(G)}/2\big\rfloor^{-2}
	\cdot \max \{k_i\cdot\ell_i\}_{i=1,2,\dots,g}
\,.$$
\end{lemma}

\begin{proof}
Let $j$ be the smallest integer such that $k_j\ell_j=\max \{k_i\ell_i\}_{
i=1,2,\dots,g}$, and let $H:=G_{j-1}$ (in case $j=1$, recall that
we set $G_0:=G$). 
Thus $H$ is a spanning subgraph of $G$ (recall that we deal with a {\em
dual} planarizing sequence), and $H$ is embedded in a surface of genus
$g_1=g-j+1$. An iterative application of Lemma~\ref{lem:dew2} yields
that $\ewnd(H)/\lfloor\Delta(G)/2\rfloor\geq
5\cdot2^{g-1}\cdot2^{g_1-g} =5\cdot2^{g_1-1}$.

We now apply Lemma~\ref{lem:kl-to-stretch} to $H$.
Thus the resulting graph $H'$ of genus $g'\geq1$ satisfies
\smallskip
$\ewnd(H')/\lfloor\Delta(H')/2\rfloor\geq
 \ewnd(H')/\lfloor\Delta(G)/2\rfloor\geq5\cdot2^{g'-1}$
and $\stretchd(H')\geq 2^{2g'-2g_1}\cdot k_j\ell_j\geq 2^{2g'-2g}\cdot
k_j\ell_j$.
Since $H'\subseteq H\subseteq G$, we also have $\Tex(G) \geq \Tex(H')$. 
Using Corollary~\ref{cor:agrid-all} we finally get
\begin{align*}
\Tex(G)\geq& \,\Tex(H') \>\geq\> \frac{1}{7}\,2^{3-2g'}
	\big\lfloor{\Delta(H')}/2\big\rfloor^{-2} \cdot\stretchd(H')
		 \>\geq\> \frac{1}{7}\,2^{3-2g'}
	\big\lfloor{\Delta(G)}/2\big\rfloor^{-2} \cdot\stretchd(H')
\\
	\geq&\>  \frac{1}{7}\,2^{3-2g'}
	\big\lfloor{\Delta(G)}/2\big\rfloor^{-2}
		\cdot2^{2g'-2g}k_j\ell_j
	\>=\>\frac{1}{7}\,2^{3-2g}
	\big\lfloor{\Delta(G)}/2\big\rfloor^{-2} \cdot k_j\ell_j
\,.\end{align*}
The subsequent estimate on $\crg(G)$ then results from Corollary~\ref{cor:crossing-texp}.
\end{proof}

\subsection{Proof of the main theorem}
\label{sub:mainproof}

Having deferred the long and technical proofs of the previous
subsections for the later sections of the paper, all the ingredients
are now in place to prove Theorem~\ref{thm:main-overview}(a). 
In the coming formulation, recall that $\ewnd(G)\geq\fw(G)$
by Lemma~\ref{lem:ewdfw}.

\begin{theorem}[Theorem~\ref{thm:main-overview}(a) with
		 $r_0=5\cdot2^{g-1}\lfloor\Delta/2\rfloor$]
\label{thm:main-parta}
Let $g>0$ and $\Delta$ be integer constants.
There exist a universal constant $c_1>0$, 
and a constant $c_0>0$ depending only on $g$ and $\Delta$,
such that the following holds
for any graph $G$ of maximum degree $\Delta$ embedded in $\Sigma_g$
with nonseparating dual edge-width at least
$5\cdot2^{g-1}\lfloor\Delta/2\rfloor:$
\begin{equation}\label{eq:main-parta}
c_0\cdot\crg(G) \leq \Tex(G) \leq c_1\cdot\crg(G) 
\end{equation}
\end{theorem}
\begin{proof}

The right hand side inequality in \eqref{eq:main-parta} follows at once 
from Corollary~\ref{cor:crossing-texp} (with $c_1=12$).
The left hand side follows by combining Theorem~\ref{thm:upper-cr} and
Lemma~\ref{lem:kl-to-tex};
$\crg(G)\leq 3\left(2^{g+1}-2-g\right)\cdot\max
          \{k_i\ell_i\}_{i=1,2,\ldots,g}\leq
 3\left(2^{g+1}-2-g\right)\cdot
        7\big\lfloor{\Delta}/2\big\rfloor^{2}
          \cdot2^{2g-3}\cdot\Tex(G)$,
which determines $c_0$.
\end{proof}

It could also be interesting to similarly compare $\crg(G)$ and $\Tex(G)$ 
to the dual stretch (of~$G$).
Unfortunately, as discussed at the beginning of this section,
there cannot be any such two-way inequality as \eqref{eq:main-parta} with $\stretchd(G)$.
Although, following Lemma~\ref{lem:kl-to-tex}, we can give a weaker relation.

\begin{theorem}
Let $g>0$ and $\Delta$ be integer constants.
There are constant $c_0',c_1'>0$ and $c_0'',c_1''>0$, depending 
on $g$ and $\Delta$, such that the following holds
for any graph $G$ of maximum degree $\Delta$ embedded in $\Sigma_g$
with nonseparating dual edge-width at least
$5\cdot2^{g-1}\lfloor\Delta/2\rfloor$:
There exists $g'$, $0< g'\leq g$, and a subgraph $H'$
of $G$ embedded in $\Sigma_{g'}$ such that
$\ewnd(H')\geq5\cdot2^{g'-1}\lfloor\Delta/2\rfloor$ and
\begin{equation}\label{eq:main-stra}
c_0'\cdot\crg(G) \leq \stretchd(H') \leq c_1'\cdot\crg(G) 
\,.\end{equation}
Consequently,
\begin{equation}\label{eq:main-strat}
c_0''\cdot\Tex(G) \leq \stretchd(H') \leq c_1''\cdot\Tex(G)
\,.\end{equation}
\end{theorem}

We note in passing that the embedding of the graph $H'$
in this theorem is, in fact, the embedding inherited from the rotation
scheme of~$G$ (cf.~Section~\ref{sec:finding}).
\begin{proof}
Let $H'\subseteq G$ be the graph claimed by Lemma~\ref{lem:kl-to-tex}.
The right hand side inequality in \eqref{eq:main-stra} is implied by
$\Tex(G) \>\geq\> 
 \frac{1}{7}\,2^{3-2g'}
        \big\lfloor{\Delta(G)}/2\big\rfloor^{-2} \cdot\stretchd(H')$
of Lemma~\ref{lem:kl-to-tex} combined with \eqref{eq:main-parta}.
Likewise, the left hand side of \eqref{eq:main-stra} follows from
Theorem~\ref{thm:upper-cr} and again Lemma~\ref{lem:kl-to-tex}.
\eqref{eq:main-strat} then follows at once from \eqref{eq:main-stra} and
\eqref{eq:main-parta}.
\end{proof}

As for the algorithmic part of Theorem~\ref{thm:main-overview}, we can now
provide only a weaker conclusion requiring a dense embedding.
Since removing this restriction requires tools very different from the core
of this paper, we leave the full proof of Theorem~\ref{thm:main-overview}(b)
till Section~\ref{sec:nodensity}.
Again, as in Theorem~\ref{thm:upper-cr}, we represent the output drawing 
by a planar graph obtained by
replacing each crossing with a new (specially marked) vertex.

\begin{theorem}[Weaker version of Theorem~\ref{thm:main-overview}(b)]
\label{thm:main-dense}
Let $g>0$ and $\Delta$ be integer constants.
Assume $G$ is a graph of maximum degree $\Delta$ embeddable in the surface 
$\Sigma_g$ with $\ewnd(G)\geq5\cdot2^{g-1}\lfloor\Delta/2\rfloor$.
There is an algorithm that, in time $\ca O(n\log\log n)$ where $n=|V(G)|$, 
outputs a drawing of $G$ in the plane with at most $c_2'\cdot\crg(G)$ crossings,
where $c_2'>0$ depends only on $g$ and~$\Delta$.
\end{theorem}
\begin{proof}
First,
although our algorithm in Theorem~\ref{thm:upper-cr} takes an embedded graph as its
input, we might as well take a non-embedded graph as input without any
loss of efficiency; indeed, 
Mohar~\cite{Mo99} showed that, for any fixed genus $g$, there is a linear
time algorithm that takes as input any graph $G$ embeddable in $\Sigma_g$
and outputs an embedding of $G$ in $\Sigma_g$.

Therefore, by Theorem~\ref{thm:upper-cr} we get a drawing of $G$ with the
number of crossings as in \eqref{eq:uppercr}.
Using Lemma~\ref{lem:kl-to-tex} we can then estimate
$3\left(2^{g+1}-2-g\right)\cdot\max
          \{k_i\ell_i\}_{i=1,2,\ldots,g}\leq
 3\left(2^{g+1}-2-g\right)\cdot
        21\big\lfloor{\Delta}/2\big\rfloor^{2}
          \cdot2^{2g-1}\cdot\crg(G)=
 c_2'\cdot\crg(G)$.
\end{proof}

\section{Finding grids in the torus}
\label{sec:grids}

In this section we prove Theorems~\ref{thm:two-cycle-families},
\ref{thm:agrid-torus}, and Corollary~\ref{cor:agrid-torus}.

\begin{figure}[b]
\begin{center}
\def\svgwidth{7cm}
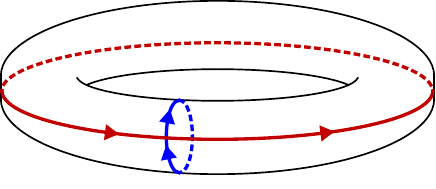
\vspace{-5mm}
\end{center}
\caption{A basis $(\alpha,\beta)$ of the torus.}
\label{fig:torusab}
\end{figure}

\def\cylin{\mbox{$\Pi$}}
\begin{proof}[\bf Proof of Theorem~\ref{thm:two-cycle-families}]
Let $\alpha, \beta$ be oriented simple closed curves such that
$(\alpha,\beta)$ is a basis and $\alpha,\beta$
intersect (cross) each other exactly once; see Figure~\ref{fig:torusab}.
Using a standard surface
homeomorphism argument (cf.~\cite[Section 6.3.2]{St93}), 
we may assume without loss of generality that each $C_i$ has the same homotopy 
type as $\alpha$ (we assign an orientation to the cycles $C_i$ to ensure this). 
Thus it follows that the cycles $D_j$ may be oriented in such a way that
there exist integers $r\ge 0,s\ge 1$
such that the homotopy type of each $D_j$ is $\alpha^r\beta^s$.

We assume without loss of generality that $p\geq q\geq3$. We let $C_+:=C_1\cup C_2\cup\dots\cup
C_p$ and $D_+:=D_1\cup D_2\cup\dots\cup D_q$. We shall assume that
among all possible choices of the collections $\{C_1,\dots,C_p\}$ and
$\{D_1,\ldots,D_q\}$ that satisfy the conditions in the theorem (for
the given values of $p$ and $q$), our collections $\cc:=\{C_1,\ldots,C_p\}$
and $\{D_1,\ldots,D_q\}$ minimize $|E(C_+)\sem E(D_+)|$.

The indices of the $C_i$-cycles (respectively, the $D_j$-cycles) are read modulo
$p$ (respectively, modulo~$q$). 
We may assume that the cycles $C_1,C_2,\dots,C_p$ appear in this cyclic order
around the torus; that is, for each $i=1,2,\ldots,p$, one of the cylinders 
bounded by $C_i$ and $C_{i+1}$ does not intersect any other curve in $\cc$. 
We say that $C_i$ is to the {\em left of} $C_{i+1}$,
and $C_{i+1}$ is to the {\em right of} $C_{i}$.
Moreover, we may choose orientations 
such that $\beta$ intersects $C_1,C_2,\ldots,C_p$ in this cyclic order.

At first glance it may appear that it is easy to get the desired grid
as a minor of $C_+\cup D_+$, since every
$D_j$ has to intersect
each $C_i$ in some vertex of $G$ (this follows since each pair
$(C_i,D_j)$ is a basis). There are, however, two
possible complications. First, two cycles $C_i, D_j$ could have many
``zigzag'' intersections, with $D_j$ intersecting $C_i$, then
$C_{i+1}$, then $C_i$ again, etc.
See, for example, the fragment depicted in Figure~\ref{fig:quasimove} (left). 
Second, $D_j$ may ``wind'' many times in the direction orthogonal to~$C_i$. 
These are the main problems to overcome in the upcoming proof.

\smallskip

We start by showing that, even though we may intersect some
$C_i$ several times when traversing some $D_j$, it follows from the choice of $\cc$ that, after $D_j$ intersects $C_i$, it must hit either $C_{i-1}$
or $C_{i+1}$ before coming back to $C_i$.

\begin{claim}\label{clm_ear}
No $C_+$-ear contained in $D_+$ has both ends on the same cycle $C_i$.
\end{claim}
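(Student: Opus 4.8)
The plan is to argue by contradiction using the minimality of $|E(C_+)\sem E(D_+)|$ in the choice of $\cc$. Suppose some $C_+$-ear $P\subseteq D_+$ has both ends $r,s$ on the same cycle $C_i$. Since $P$ lies inside some $D_j$ and $D_j$ is disjoint from the other $D$-cycles, and since the interior of $P$ is disjoint from $C_+$, the ear $P$ lives entirely inside the union of the two cylinders of the torus adjacent to $C_i$ (the one bounded by $C_{i-1},C_i$ and the one bounded by $C_i,C_{i+1}$). I would first observe that $P$ cannot cross from one of these cylinders into the other, because to do so it would have to hit $C_{i-1}$ or $C_{i+1}$; hence $P$ lies in a single cylinder, say the one $\Pi$ bounded by $C_i$ and $C_{i+1}$, with both endpoints on the $C_i$ side of $\partial\Pi$.

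Next I would analyze the topological type of $P$ inside the annulus $\Pi$. A simple arc in an annulus with both endpoints on the same boundary circle is either (i) trivial — it cuts off a disc from $\Pi$ — or it is non-trivial in the sense that together with a subarc of $C_i$ it bounds a disc containing the whole other boundary circle $C_{i+1}$; but the latter would make $P$ (concatenated with the appropriate subpath of $C_i$) a cycle homotopic to $C_i$ that $D_j$ would have to cross an odd number of times to be a basis partner, which cannot happen since $P\subseteq D_j$ and $D_j$ meets $C_{i+1}$ transversally an even... — more carefully, since $D_j$ has homotopy type $\alpha^r\beta^s$ with $s\geq1$, $D_j$ must cross $C_{i+1}$, contradicting that $P$ together with a subpath of $C_i$ separates $C_{i+1}$ from the rest while $D_j\setminus P$ stays on the $C_i$-side. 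So $P$ is of the trivial type: there is a subpath $Q\subseteq C_i$ with the same endpoints $r,s$ as $P$ such that $P\cup Q$ bounds a disc $\Delta_0$ in $\Pi$ whose interior meets no cycle of $\cc$.

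Now I would perform the rerouting. Replace the subpath $Q$ of $C_i$ by $P$, obtaining a new cycle $C_i':=(C_i\setminus Q)\cup P$ homotopic to $C_i$ (since $P\cup Q$ bounds a disc), still disjoint from all $C_{i'}$, $i'\neq i$ (the disc $\Delta_0$ misses them). Set $\cc':=\{C_1,\dots,C_{i-1},C_i',C_{i+1},\dots,C_p\}$; this is a valid collection for the theorem. The key point is the effect on the objective $|E(C_+)\sem E(D_+)|$: every edge of $P$ lies in $D_+$, so $E(C_i')\setminus E(D_+)\subseteq E(C_i)\setminus E(D_+)\setminus E(Q)$, hence $|E(C_+')\setminus E(D_+)|\leq|E(C_+)\setminus E(D_+)|-|E(Q)\setminus E(D_+)|$. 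To reach a contradiction with minimality I need this inequality to be strict, i.e.\ $Q$ must contain at least one edge not already in $D_+$. The main obstacle is precisely this: handling the degenerate possibility that $Q$ is entirely contained in $D_+$ (or that $Q$ is a single vertex, $r=s$). If $Q\subseteq D_+$, then both $P$ and $Q$ are contained in $D_+$, and $P\cup Q$ is a contractible cycle lying in $D_+$; since $D_+$ is a disjoint union of simple cycles $D_j$, a contractible cycle inside it would force $Q=P$ or $P$ trivial, which returns the degenerate case $r=s$ with $P$ a contractible loop — but then $P\subseteq D_j$ would be a contractible subcycle of the simple cycle $D_j$, impossible unless $P$ is a single vertex, i.e.\ an empty ear, contradiction. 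So in all non-degenerate cases $Q\setminus E(D_+)\neq\emptyset$, the objective strictly decreases, and we contradict the minimality of $\cc$. This completes the proof.

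The step I expect to be the real work is the annulus case analysis establishing that $P$ is of the trivial (disc-cutting) type and that the resulting disc $\Delta_0$ avoids all cycles of $\cc$ — this uses that consecutive cycles bound cylinders free of other $\cc$-cycles and that $D_j$'s homotopy class forces it to cross each $C_i$ and also $C_{i\pm1}$.
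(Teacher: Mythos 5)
Your overall strategy is exactly the paper's: reroute $C_i$ along the ear $P$ (replacing the subpath $Q$ of $C_i$ that $P$ cuts off) and contradict the minimality of $|E(C_+)\setminus E(D_+)|$; the paper's own proof is precisely this, stated in three lines without the annulus analysis or any discussion of strictness. Your identification of the two points that actually need care --- choosing the subarc $Q$ so that the rerouted cycle stays homotopic to $C_i$ and disjoint from the other cycles of the family, and ensuring the quantity strictly decreases --- is sound, and your main line goes through.

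The one place where your reasoning as written does not hold up is the degenerate case $E(Q)\subseteq E(D_+)$. From ``$P\cup Q$ is a contractible cycle lying in $D_+$'' you conclude that this ``would force $Q=P$ or $P$ trivial''; that implication is false. What actually follows is that the cycle $P\cup Q$, being connected with all edges in the disjoint union of the simple cycles $D_1,\dots,D_q$, must coincide with some $D_j$; the contradiction is then topological, not combinatorial: $P\cup Q$ bounds a disc, whereas $D_j$ has homotopy type $\alpha^r\beta^s$ with $s\geq 1$ (a fact you invoke earlier but not here), so $D_j$ cannot be contractible. Likewise, in the case where the two ends of $P$ coincide you only treat ``$P$ a contractible loop,'' but a closed ear meeting $C_i$ in a single vertex may instead wind around the cylinder, i.e.\ $P$ may be homotopic to $C_i$; then either $P=D_j$, contradicting the homotopy type of $D_j$ (or the basis condition), or one reroutes by taking $C_i':=P$, where strictness holds because $E(C_i)\subseteq E(D_+)$ would force $C_i=D_j$, again impossible. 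With these repairs --- all using facts you already have on hand --- your argument is complete and matches the paper's.
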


\begin{proof}\useInnerQed
Suppose that there is a  $C_+$-ear $P\subset D_+$ with both ends on
the same $C_i$. Modify 
$C_i$ by following $P$ in the appropriate
section, and let $C_i'$ denote the resulting cycle. The families
$\{C_1,\ldots,C_{i-1},C_i',C_{i+1},\ldots ,C_p\}$ and
$\{D_1,\ldots,D_q\}$ satisfy the conditions in the theorem. The fact that 
$|E(C_1\cup \cdots \cup C_{i-1} \cup C_{i}' \cup C_{i+1} \cdots \cup C_p)
\setminus E(D_+)| < |E(C_+ \setminus D_+)|$
contradicts the choice of $\{C_1,\ldots,C_p\}$.
\end{proof}

To proceed with our proof, we need to relax the requirement that
$D_1,\ldots,D_q$ are cycles.
A {\em quasicycle} is a closed walk $D'$ in a graph such that every two
consecutive edges of $D'$ are distinct.
As with cycles, we assign each quasicycle an implicit orientation.
For $j\in\{1,2,\dots,q\}$,
let $D_j'$ be a quasicycle in $G$ homotopic to $D_1$, with the same orientation.
The {\em rank} $s_j$ of $D_j'$ is the number of connected components
of $C_+\cap D_j'$. By
traversing $D_j'$ once and registering each time it intersects a curve
in $\cc$, starting with (some intersection with) $C_1$, we obtain 
an {\em intersection sequence} $a_j(i)$, $i=1,\dots,s_j$, of length $s_j$ 
where each $a_j(i)$ is from $\{1,\dots,p\}$. 
To simplify our notation, we introduce the following convention:
the index $i$ in the sequence $a_j$ is read modulo $s_j$ -- meaning that
$a_j(i+s_j)=a_j(i)$, and the value of $a_j(i)$ is read modulo~$p$ ``plus 1'', i.e.,
if $a_j(i)=p$ then $a_j(i)+1=1$.

Since we chose the starting point of
the traversal of $D_j'$ so that the first curve of $\cc$ it intersects
is $C_1$, it follows that $a_j(1)=1$. 
We denote by $Q_{j,t}$, $t=1,2,\dots,s_j$, the path of $D_j'$ 
(possibly a single vertex)
forming the corresponding intersection with the cycle $C_{a_j(t)}$,
and by $T_{j,t}$ the path of $D_j'$ between $Q_{j,t}$ and $Q_{j,t+1}$.
We say that $D_j'$ is {\em $C_+$-ear good} if 
no $C_+$-ear contained in $D_j'$ has both ends on the same~$C_i$ (cf.\ Claim~\ref{clm_ear}). 
Hence if $D_j'$ is $C_+$-ear good then $a_j(t+1)\not=a_j(t)$ 
and thus $a_j(t+1)=a_j(t)\pm1$ for $t=1,2,\dots,s_j$.

We also need to slightly relax the property that $D'_i$ is disjoint from
$D'_j$ if $i\not=j$.
(This is, for example, useful in zig-zag situations like the one depicted in
Figure~\ref{fig:quasimove} (left), in which no ``local improvement'' is
possible without introducing another intersection between some quasicycles.)
We define the following restriction.
A collection of $C_+$-ear good quasicycles $\{D_1',D_2',\dots,D_q'\}$ in $G$
is called {\em quasigood} if it moreover satisfies the following 
for any $m,n\in\{1,2,\dots,q\}$:
whenever $D_n'$ intersects $D_m'$ in a connected component $P$, this
component $P$ is a path (in the case $m=n$, which is possible due to
$D_n'$ being a quasicycle, the path $P$ is repeated within the walk $D'_n$)
and the following two conditions hold, possibly exchanging $n$ with $m$ in
both of them:
\begin{itemize}
\item[(Q1)]
there exists $x$, an index of the intersection sequence of $D_n'$, such that
$a_n(x-1)=a_n(x+1)=a_n(x)-1$ and $P\subseteq Q_{n,x}$
(in particular, $P$ belongs to $C_{a_n(x)}$);
\item[(Q2)]
the subembedding of $S:=T_{n,x-1}\cup Q_{n,x}\cup T_{n,x}$,
which is a subpath of $D_n'$, stays locally on one side of the embedding of $D_m'$
(by (Q1), $S$ is to the left with respect to~$C_{a_n(x)}$).
\end{itemize}

Informally, this means that if $D_n'$ intersects $D_m'$ in~$P$,
then $D_n'$ makes a $C_{a_n(x-1)}$-ear $S$ ``touching'' $D_m'$ in 
$P\subseteq D_m'\cap C_{a_n(x)}$, and $S$ is to the left of $C_{a_n(x)}$.
Such a situation can be seen with the thick solid fragments of $D_2'$
and $D_3'$ in Figure~\ref{fig:quasimove} (right).

Since the cycles in $\{D_1,D_2,\ldots,D_q\}$ are pairwise disjoint and 
$D_j$ is clearly a $C_+$-ear good quasicycle for each $j=1,2,\ldots,q$, 
it follows that $\{D_1,\ldots,D_q\}$ is a quasigood collection. 
Now among all choices of a quasigood collection $\dd:=\{D_1',D_2',\dots,D_q'\}$ 
in $G$, we select $\dd$ minimizing the sum of the ranks of its quasicycles.
For each $D_j'$, as above, we let $s_j$ denote its rank.

\begin{figure}[t]
\begin{center}
\input{repairAtX.pdf_tex}\qquad\qquad\qquad\qquad
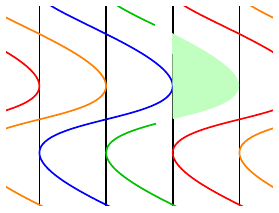\smallskip
\end{center}
\caption{(left)
A zig-zag arrangement of four quasicycles $D'_1,D'_2,D'_3,D'_4$
(horizontal, colored) and four disjoint cycles $C_{i-2},C_{i-1},C_{i},C_{i+1}$
(vertical, black). (right)
The result of a local move on $D'_2$ which makes a quasigood
intersection with $D'_3$, as used in the proof of Claim~\ref{clm_quasi}.}
\label{fig:quasimove}
\end{figure}

\begin{claim} \label{clm_quasi}
For all $1\leq j\leq q$ the intersection sequence of $D_j'$ satisfies 
$a_j(t-1)\not=a_j(t+1)$ for any $1\leq t\leq s_j$.
Consequently, $\dd$ is a collection of pairwise disjoint cycles in~$G$.
\end{claim}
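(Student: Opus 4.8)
The plan is to argue by a minimality contradiction, exactly mirroring the strategy used for Claim~\ref{clm_ear} but now at the level of the quasigood collection and its total rank. Suppose some intersection sequence violates the assertion, i.e.\ there is an index $t$ with $a_j(t-1)=a_j(t+1)=:i$ and (since $D_j'$ is $C_+$-ear good) $a_j(t)=i\pm1 \pmod p$. Then the subpath $R:=T_{j,t-1}\cup Q_{j,t}\cup T_{j,t}$ of $D_j'$ is a $C_+$-ear with both ends on $C_{i}$, going out to touch $C_{a_j(t)}$ and coming straight back. The idea is to \emph{shortcut} this detour: replace $R$ inside $D_j'$ by a subpath of $C_{i}$ joining the two endpoints of $R$, obtaining a new closed walk $D_j''$. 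Because $R$ together with an arc of $C_i$ bounds one of the standard cylinders between $C_i$ and $C_{a_j(t)}$ (no other curve of $\cc$ lies in that cylinder, by the chosen cyclic ordering), $D_j''$ is homotopic to $D_j'$ and hence to $D_1$; after suppressing degree-$1$ vertices it is again a quasicycle. Its rank has strictly dropped, since we removed the component $Q_{j,t}$ (and possibly merged $Q_{j,t-1}$ with $Q_{j,t+1}$, which only helps). So if we can show $D_1',\dots,D_j'',\dots,D_q'$ is still quasigood, the total rank has decreased, contradicting minimality.

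First I would verify that $D_j''$ is still $C_+$-ear good: the only new intersections with $C_+$ come from the inserted arc of $C_i$, and that arc meets only $C_i$, so no new $C_+$-ear with both ends on a single $C_{i'}$ is created, and the old ones are a subset of those of $D_j'$. Next, the delicate part is re-establishing the quasigood conditions (i) and (ii) for every pair involving $D_j''$. Intersections of $D_j''$ with $D_m'$ ($m\neq j$) that survive from $D_j'$ keep their witnessing index $x$ (re-indexed), and conditions (i)–(ii) are inherited. New intersections can only involve the inserted arc of $C_i$; here I would use condition (ii) for the \emph{original} pair $(D_j',D_m')$ — which says the detour $R$ stays locally on one side of $D_m'$ — to conclude that the shortcut arc of $C_i$, lying on that same side of $D_m'$ in the cylinder, does not create any transverse intersection with $D_m'$ that would violate (i), i.e.\ any contact is again a one-sided touching of the required ear type. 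Self-intersections of $D_j''$ are handled the same way using (ii) applied within $D_j'$.

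Once the claim's first sentence is proved, the ``consequently'' is a short deduction. With $a_j(t-1)\neq a_j(t)\neq a_j(t+1)$ and $a_j(t-1)\neq a_j(t+1)$, and all steps $|a_j(t+1)-a_j(t)|\in\{1,p-1\}$, each intersection sequence is forced to be strictly monotone around $\ZZ_p$ (it can never turn back), so $s_j=p$ and each $D_j'$ meets each $C_i$ in exactly one component; in particular no $D_j'$ has a $C_+$-ear at all, so a self-intersection would give a $C_+$-ear contradiction, and by condition (i) any intersection between distinct $D_n',D_m'$ would force a $C_+$-ear in one of them — impossible. Hence the $D_j'$ are pairwise disjoint simple cycles. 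The main obstacle I anticipate is precisely the bookkeeping in the middle paragraph: checking that the shortcut does not destroy the quasigood structure for the other quasicycles, since one must argue carefully that replacing $R$ by a boundary arc of the thin cylinder cannot introduce a ``wrong-sided'' or transverse intersection — this is where the one-sidedness clause (ii) of the quasigood definition is doing the real work, and it must be invoked with the correct orientation of the cylinder.
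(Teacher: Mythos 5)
Your overall strategy---shortcut the back-and-forth $T_{j,t-1}\cup Q_{j,t}\cup T_{j,t}$ along an arc of $C_i$ and contradict the minimality of the total rank---is exactly the idea of the paper's proof, but the step where you re-establish quasigoodness has a genuine gap. Condition (ii) of quasigoodness only constrains $D_m'$ at places where it actually intersects $D_j'$; it says nothing about a quasicycle $D_m'$ that meets the interior of the shortcut arc $P'\subseteq C_i$ without touching $D_j'$ there. Such a $D_m'$ can cross $P'$, enter the disc $R_0$ bounded by your detour and $P'$, touch $C_{i+1}$ along $Q_{j,t}$ (a perfectly admissible ``left-side'' touching of $D_j'$), and return to $C_i$---that is, a nested back-and-forth sitting inside your region. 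After your rerouting, $D_j''$ runs along $P'$ and now meets this $D_m'$ transversally, destroying quasigoodness, and the appeal to condition (ii) for the original pair cannot exclude this, since that condition never applied to these points. The paper closes exactly this hole with an extra minimization: among all violating configurations (over all $j$, not just within a fixed $D_j'$) it picks one whose region $R_0$ is minimal, and then shows no $D_m'$ meets $R_0$ at all: it cannot cross the detour by the one-sided condition (ii), it cannot enter and leave through $P'$ without touching $C_{i+1}$ because that would create a $C_+$-ear with both ends on $C_i$ (contradicting ear-goodness as in Claim~\ref{clm_ear}), and it cannot touch $Q_{j,t}$ from inside by the minimality of $R_0$. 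Without some such innermost-region argument your verification does not go through. You also skip the orientation reduction (the paper first argues one may assume $a_j(t)-a_j(t-1)\in\{1,1-p\}$), which is what makes the ``left-side'' bookkeeping for quasicycles touching $P'$ from outside $R_0$ come out consistent with the definition.

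The ``consequently'' part is also argued incorrectly, though it is easy to repair. Excluding $a_j(t-1)=a_j(t+1)$ forces each intersection sequence to be monotone modulo $p$, but it does not force $s_j=p$: a quasicycle may wind around several times, so $s_j$ is a multiple of $p$, and $D_j'$ still contains many $C_+$-ears (each $T_{j,t}$ is one); the multiple-winding issue is only handled later, in Claim~\ref{clm_wonce}. The disjointness follows for a simpler reason: by the definition of a quasigood collection, any intersection of $D_n'$ with $D_m'$ (including self-intersections, $m=n$) must lie inside some $Q_{n,x}$ with $a_n(x-1)=a_n(x+1)$, and the first statement of the claim says no such index exists; hence there are no intersections or self-intersections at all, and the quasicycles are pairwise disjoint cycles.
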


\begin{proof}\useInnerQed
The conclusion that $\dd=\{D_1',D_2',\ldots,D_q'\}$ is
a collection of pairwise disjoint cycles directly follows from the
first statement in the claim, since $\dd$ is a quasigood collection.
We hence focus on the first statement, $a_j(t-1)\not=a_j(t+1)$, in the proof.

The main idea in the proof is quite simple: if $a_j(t-1)=a_j(t+1)$,
then we could modify $D_j'$ rerouting it through $C_{a_j(t-1)}$ instead of
$T_{j,t-1}\cup Q_{j,t}\cup T_{j,t}$, thus decreasing $s_j$ (and hence
the total sum of the ranks) by $2$, and
consequently contradicting the minimum choice of $\dd$ above.
This move is illustrated in Figure~\ref{fig:quasimove}.
We now formalize this rough idea.

Recall that, if for some $j,t$ we have \mbox{$a_j(t-1)=a_j(t+1)=i$}
then $a_j(t)=i\pm1$.
If $a_j(t)=i-1$ was true, for some other $t'$ we would 
necessarily have $a_j(t'-1)=a_j(t'+1)$ and $a_j(t')=a_j(t'-1)+1$.
So, seeking a contradiction, we may assume that $a_j(t)=i+1$.
Let $\cylin_i$ denote the cylinder bounded by $C_i$ and $C_{i+1}$.
Then the path $T:=T_{j,t-1}\cup Q_{j,t}\cup T_{j,t}$ is drawn in
$\cylin_i$ with both ends on $C_i$ and ``touching''~(i.e., not
intersecting transversely) $C_{i+1}$.
We denote by $R_0\subset\cylin_i$ the open region bounded by $T$ and $C_i$,
and by $T'$ the section of the boundary of $R_0$ not belonging to~$D_j'$
(hence $T'\subset C_i$).

Assuming that $R_0$ is minimal over all choices of~$j,t$ for which $a_j(t-1)=a_j(t+1)$,
we show that no $D_m'$, $m\in\{1,\dots,q\}$, intersects~$R_0$. Indeed, 
if some $D_m'$ intersected $R_0$, then $D_m'$ could not enter $R_0$
across $T$ by the definition (Q2) of a quasigood collection.
Hence $D_m'$ should enter and leave $R_0$ across~$T'$,
but not touch $Q_{j,t}\subseteq C_{i+1}$ by the minimality of $R_0$.
But then, $D_m'$ would make a $C_+$-ear with both ends on $C_i$, contradicting
the assumption that $D_m'$ was $C_+$-ear good.

Now we can form $D_j^o$ as the symmetric difference of $D_j'$ with the boundary
of~$R_0$ (so that $D_j^o$ follows $T'$).
To argue that $\dd^o:=\{D_1',\dots,D'_{j-1},D_j^o,D'_{j+1},\dots,D_q'\}$ 
is a quasigood collection again, it suffices to verify the conditions of a
quasigood collection for all possible new intersections of $D_j^o$ along~$T'\subset C_i$.
Suppose that there is some $D_n'$ such that $Q_{n,x}$ (the local intersection
of $D_n'$ with~$C_i$ for an appropriate index $x$) intersects also~$T'$.
If $Q_{n,x}$ contains (at least) one of the ends of $T'$, then $Q_{n,x}$ intersects $D'_j$.
Since $D_n'$ is disjoint from the open region $R_0$, assumed validity
of (Q1),(Q2) for $D_n',D'_j$ immediately implies their validity for $D_n',D_j^o$.
Similarly, if $Q_{n,x}$ is contained in the interior of $T'$
(in which case $Q_{n,x}$ is disjoint from $D'_j$), then the fact that $D_n'$
is disjoint from the open region $R_0$ implies that $D_n'$ is locally to the
left of~$C_i$.
Hence, it is $a_n(x)=i$ and $a_n(x-1)=a_n(x+1)=i-1$ by Claim~\ref{clm_ear}, 
conforming to (Q1). (Q2) now follows trivially.

Finally, since $\dd^o$ is quasigood as well, but the sum of the ranks of its elements is
strictly smaller than it was for~$\dd$ (by~$2$), 
we get a contradiction to the choice of $\dd$.
\end{proof}

\begin{claim} \label{clm_wonce}
There exists a collection of $q$ pairwise disjoint, pairwise homotopic noncontractible cycles
in~$G$,
each of which has a connected nonempty intersection with each cycle in $\cc$.
\end{claim}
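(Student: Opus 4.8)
The starting point is the output of Claim~\ref{clm_quasi}: pairwise disjoint cycles $D_1',\dots,D_q'$, each homotopic to $D_1$ and $C_+$-ear good, with monotone intersection sequences $a_j$. Since consecutive entries of $a_j$ are cyclically adjacent in $\{1,\dots,p\}$ and (by Claim~\ref{clm_quasi}) the direction never reverses, $a_j$ is a monotone cyclic traversal of $1,2,\dots,p$ repeated $w_j\ge1$ times; and $w_j$ equals the $\beta$-exponent $s$ of the common homotopy type $\alpha^r\beta^s$, so each $D_j'$ meets each $C_i$ in exactly $s$ connected pieces.

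If $s=1$ the claim is immediate: $\{D_1',\dots,D_q'\}$ already works, since each $D_j'$ then meets each $C_i$ in a single vertex. So assume $s\ge2$. The new cycles we produce cannot be homotopic to $D_1$ (a curve homotopic to $\alpha^r\beta^s$ with $s\ge2$ has geometric intersection number $s\ge2$ with every $C_i$, so it cannot meet $C_i$ connectedly), but they will be pairwise freely homotopic to one another.

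The plan for $s\ge2$ is to \emph{unwind} the collection once. Put $D_+':=D_1'\cup\dots\cup D_q'$. First I would record the cyclic pattern in which $D_1',\dots,D_q'$ meet a fixed curve $C_1$: since the $D_j'$ are pairwise disjoint and pairwise homotopic they cobound $q$ annuli cyclically arranged in the torus, and, taking $C_1$ in minimal position with respect to $D_+'$ (which can be arranged), $C_1$ passes through these annuli monotonically, so the pattern is periodic, $(D_{\sigma(1)}',\dots,D_{\sigma(q)}')^{s}$ for a cyclic ordering $\sigma$. For each $j$ I would pick a once-around subpath $P_j\subseteq D_j'$ between two consecutive crossings of $D_j'$ with $C_1$, and an arc $\gamma_j\subseteq C_1$ joining its two ends, so that $E_j:=P_j\cup\gamma_j$ is a cycle meeting every $C_i$ with $i\ne1$ in exactly one vertex and meeting $C_1$ in the single arc $\gamma_j$, hence connectedly. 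The task is to make these choices coherently --- exploiting the periodic pattern, spreading the once-around pieces over the $q$ annuli --- so that $E_1,\dots,E_q$ are pairwise disjoint, pairwise freely homotopic, and noncontractible (each $E_j$ winding exactly once around the cylinder decomposition of the torus given by $\cc$). Equivalently, this amounts to showing that the torus quadrangulation $C_+\cup D_+'$, being an arrangement of two parallel families of curves, contains $q$ pairwise disjoint cycles transversal to the whole family $\cc$, with $\cc$ left intact.

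The step I expect to be the main obstacle is precisely this simultaneous disjointness of the $E_j$. A single re-closing arc $\gamma_j\subseteq C_1$ of a once-around piece of $D_j'$ is forced to run through a crossing of $C_1$ with \emph{every} other $D_m'$, so the most naive choices of the $E_j$ intersect one another --- and badly when $s<q$, where no choice of a single closing arc per $D_j'$ can avoid this. Getting around it requires the coherent, pattern-respecting choice of the $P_j$'s and $\gamma_j$'s above, followed by a careful (though essentially routine) case analysis checking that no two $E_j$ cross, that each $E_j\cap C_i$ is a single vertex or arc, that the $E_j$ are genuine subgraph cycles (no contraction is needed), and that each is noncontractible; the pairwise-homotopy statement then follows because a cycle crossing each $C_i$ exactly once has a uniquely determined free homotopy class on the torus.
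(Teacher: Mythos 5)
You have correctly reduced the problem to the wound case $t\ge 2$ (your $s$) and correctly located the crux, but your construction has a genuine gap at exactly that point, and it is not a gap of missing routine detail: the construction as you specify it --- for \emph{every} $j$, take a once-around piece $P_j\subseteq D_j'$ between two consecutive crossings with $C_1$ and close it by an arc $\gamma_j\subseteq C_1$ --- is provably impossible whenever $2\le t\le q$. Indeed, if the cycles $E_j=P_j\cup\gamma_j$ were pairwise disjoint, then the closed arcs $\gamma_1,\dots,\gamma_q\subseteq C_1$ would have to be pairwise disjoint. Since by Claim~\ref{clm_quasi} every $C_1$-ear of every $D_j'$ crosses all of $C_2,\dots,C_p$, there are no bigons between $C_1$ and any $D_m'$; cutting the torus along $D_m'$ one sees that every arc of $C_1$ between two consecutive crossings with $D_m'$ runs from one side of the cut to the other and hence meets $D_1'$ (which is disjoint from $D_m'$, essential and homotopic to it). Consequently each $\gamma_m$ with $m\ne 1$ contains in its interior at least one component of $C_1\cap D_1'$; these components are distinct for distinct $m$ and distinct from the two endpoints of $\gamma_1$, forcing $|C_1\cap D_1'|\ge (q-1)+2$, i.e.\ $t\ge q+1$. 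Nothing in the hypotheses rules out $t\le q$ (say $t=2$, $q=3$), so the ``coherent, pattern-respecting choice'' you appeal to does not exist inside your framework; you even observe this yourself (``no choice of a single closing arc per $D_j'$ can avoid this''), but then offer no mechanism that escapes it --- ``spreading the pieces over the $q$ annuli'' does not help, because all of your closing arcs still live on the single cycle $C_1$ and must compete for room there.

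The missing idea, and the way the paper proceeds, is to close different curves along \emph{different} members of $\cc$; this is where the normalization $p\ge q$ made at the start of the proof of Theorem~\ref{thm:two-cycle-families} is used. The paper fixes one $C_1$-ear $T_1\subseteq D_1'$ with closing arc $W_1\subseteq C_1$, giving $D_1''=T_1\cup W_1$, and then, for $j\ge 2$, uses the pairwise disjoint intersections $P_j=D_j'\cap W_1$ as anchors: from the point of $P_j$ closest to $x_1$ it follows $D_j'$ to select a $C_j$-ear $T_j\subseteq D_j'$, and closes it with the arc $W_j\subseteq C_j$ that is disjoint from $T_1$, setting $D_j''=T_j\cup W_j$. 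Because each $D_j''$ is closed along its own cycle $C_j$, the closing arcs never crowd a single member of $\cc$ and pairwise disjointness becomes a genuinely routine check. A minor further point: your justification of pairwise homotopy (``a cycle crossing each $C_i$ exactly once has a uniquely determined free homotopy class'') is not correct as stated, since such a class is $\alpha^r\beta^{\pm1}$ with $r$ undetermined; the homotopy assertion instead follows from the fact that disjoint noncontractible simple closed curves on the torus are freely homotopic, so that part is harmless once disjointness is actually secured.
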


\begin{figure}[t]
\begin{center}
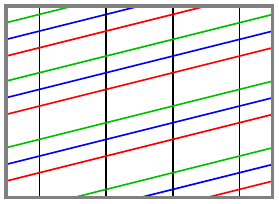
\hfill
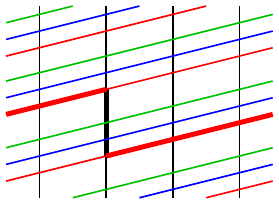
\hfill
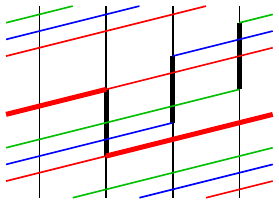
\end{center}
\caption{Proof visualization for Claim~\ref{clm_wonce}. 
(left) An initial situation with cycles $\{D'_1,D'_2,D'_3\}$ crossing $\mathcal{C}$ multiple times. 
(middle) Finding the new cycle $D''_1:=T_1\cup W1$ (bold lines), crossing each of $\mathcal{C}$ only once. 
(right) The final pairwise homotopic collection $\{D''_1,D''_2,D''_3\}$ (bold lines), each crossing each of $\mathcal{C}$ only once}
\label{fig:repairhomotopy}
\end{figure}

\begin{proof}\useInnerQed
It follows from Claim~\ref{clm_quasi} that the intersection sequence
of each $D_j'$ is a $t$-fold repetition of the subsequence 
$\langle 1,2,\ldots,p\rangle$, for some nonnegative integer $t$. 
If $t=1$, we are obviously done, so assume $t \ge 2$. 
Informally, our task is to ``shortcut'' each $D_j'$
such that it ``winds only once'' in the direction orthogonal to $\alpha$. 
See an illustration in Figure~\ref{fig:repairhomotopy}.

Note that, for all $i=1,\dots,p$ and $j=1,\dots,q$,
every $C_i$-ear contained in $D_j'$ is $C_i$-switching by
Claim~\ref{clm_quasi}, and so it intersects $C_i, C_{i+1},
 \ldots, C_{i-1}$ in this order before returning to~$C_i$.
Let $T_1 \subset D_1'$ be any $C_1$-ear, and let $x_1,y_1$ be the
end points of~$T_1$. Then let $W_1\subset C_1$ be (any)
one of the two paths contained in $C_1$ with the end points $x_1, y_1$. 
It is clear that the cycle $D_1''=T_1\cup W_1$ is a simple closed curve 
that has a connected nonempty intersection with each $C_i$, as required.

Since $D_1''$ is not homotopic to $D_1'$,
every $D_j'$ has to intersect $D_1''$ in $W_1$ (once).
Let $x_j$ denote the vertex of $D_j'\cap W_1$ closest to $x_1$.
Without loss of generality, assume that the vertices $x_1,x_2,\ldots,x_q$
appear on $W_1$ in this order.
Recall that $q\leq p$.
For $j=1,\dots,q$, let $T_j\subseteq D'_j$ be the (unique) $C_j$-ear
coming right after~$x_j$, and let $W_j\subset C_j$ be the path joining the  
ends of $T_j$ that is disjoint from $T_1$.
Since $T_j$ is disjoint from $W_1$, the cycle $D_j''=T_j\cup W_j$
is indeed disjoint from $D_1''$ and homotopic to~$D_1''$.
Similarly, $D_j''$ is disjoint from $D_k''$ for $k\not=j$, and
$D_j''$ has a connected nonempty intersection with each $C_i$, as required.
\end{proof}

To conclude the proof of Theorem~\ref{thm:two-cycle-families}, we use the
collection $\{D_1'', D_2'', \ldots, D_q''\}$ guaranteed by Claim~\ref{clm_wonce}.
For each $i=1,2,\ldots,p$ and $j=1,2,\ldots,q$, we
contract the path $C_i\cap D_j''$ to a single vertex (unless it already is a single
vertex). Since the curves $D_1'',D_2'',\ldots,D_q''$ are pairwise disjoint and pairwise
homotopic, it directly follows that the
resulting graph is isomorphic to a subdivision of the $p\times q$-toroidal grid.
\end{proof}

\begin{proof}[\bf Proof of Theorem~\ref{thm:agrid-torus}]
First we show the following.

\begin{claim}\label{cl:ellcycles}
$G$ has a set of at least
$\frac \ell{\lfloor\Delta/2\rfloor}$
pairwise disjoint cycles,
all homotopic to $\alpha$. 
\end{claim}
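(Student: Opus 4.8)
The plan is to exhibit the required family of disjoint cycles in $G$ by starting from the dual-side data and translating it back to the primary graph $G$. Recall that $k=\ewnd(G)=\ew(G^*)$ and $\ell$ is the length of a shortest $\alpha$-switching dual ear, where $\alpha\subseteq G^*$ is a shortest noncontractible dual cycle of length $k$. First I would invoke Lemma~\ref{lem:kl2} (applied to $G^*$) to get $\ell\geq k/2$, and then use the dual ear $P^*$ of length $\ell$ together with $\alpha$ to understand the homotopy structure: since $P^*$ is $\alpha$-switching, the curves $\alpha$ and the cycle $P^*\cup\alpha_1$ (for $\alpha_1$ a suitable arc of $\alpha$) form a one-leaping pair in the surface, hence a basis of the torus' fundamental group.

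The key idea is to count how many times a curve in $G$ must cross $\alpha$ (as a loop on the torus). A noncontractible cycle $C$ of $G$ in the ``$\ell$ direction'' intersects the loop $\alpha$ in $G^*$; but each edge of $C$ lies in exactly one face-pair, so meeting $\alpha$ corresponds to $C$ passing through faces dual to edges of $\alpha$. The crucial quantitative point is that $\alpha$ has only $k$ dual edges, and each vertex of $G$ has degree at most $\Delta$, so at each vertex a curve can only ``use up'' a bounded number of these crossings; more precisely, following the standard fact $\fw(G)\geq\ew(G^*)/\lfloor\Delta(G)/2\rfloor$, the relevant loop on the surface meets $G$ in at least $\ell/\lfloor\Delta/2\rfloor$ points, roughly, and by a disjointness/greedy argument one peels off that many pairwise disjoint cycles in $G$, all homotopic to each other (namely to $\alpha$ viewed as a curve, or rather to the curve orthogonal to the $P^*$-direction).

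Concretely, the plan is: let $\pi$ be the loop on the torus determined by the shortest $\alpha$-switching dual ear $P^*$ (a noncontractible simple closed curve crossing $\alpha$ exactly once); cut $G$ along $\pi$ to obtain a cylindrical (planar) embedding of a subgraph of $G$; in this cylinder, the two boundary circles correspond to the two sides of $\pi$, and the cutting removes exactly the $G$-edges dual to $E(P^*)$, which number $\ell$. Since each vertex of $G$ has degree at most $\Delta$, by a counting argument there are at least $\lceil \ell/\lfloor\Delta/2\rfloor\rceil$ vertex-disjoint paths in the cylinder connecting the two boundary circles (a Menger-type argument: a small vertex cut in the cylinder would give a short noncontractible dual cycle crossing $\pi$, contradicting $\ew(G^*)=k$ via the switching-ear length bound $\ell$). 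Each such path, together with the boundary identification, closes up to a cycle of $G$ homotopic to $\alpha$, and these cycles are pairwise disjoint since the paths are. That yields the claimed set of at least $\ell/\lfloor\Delta/2\rfloor$ pairwise disjoint cycles all homotopic to $\alpha$.

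The main obstacle I anticipate is making the Menger-type lower bound on disjoint cylinder-crossing paths rigorous while correctly controlling homotopy: one must ensure that the paths obtained genuinely close up to \emph{noncontractible} cycles homotopic to $\alpha$ (not to a contractible curve or to $\pi$), and that the vertex cut one extracts from a failure of Menger's theorem really does convert into a short $\pi$-crossing dual structure whose length is bounded below by $\ell$ — this is where the precise definition of $\ell$ as the \emph{shortest $\alpha$-switching ear length}, rather than just $k$, is essential, and where the factor $\lfloor\Delta/2\rfloor$ (coming from the local vertex-degree translation between $G$ and $G^*$, exactly as in $\fw(G)\geq\ew(G^*)/\lfloor\Delta/2\rfloor$) enters. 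The rounding (ceiling versus exact value) and the degenerate cases where some intersection path is a single vertex will need a little care but are routine.
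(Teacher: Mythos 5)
Your counting is in the right spirit, but the construction has a genuine gap at the crucial ``closing up'' step, and it is exactly the step the paper's proof is organized to make work. You cut along the loop $\pi$ determined by the switching ear (note first that an ear is a path with both ends on $\alpha$, so $\pi$ must be closed up by an arc of $\alpha$ and the cut severs up to $\ell+\lfloor k/2\rfloor$ edges, not exactly $\ell$ --- a minor point). The real problem: after this dual cut the vertices of $G$ are not duplicated, so your Menger paths run between endpoints of severed edges near one rim and endpoints of severed edges near the other rim, and nothing forces a path to join the two ends of the \emph{same} severed edge. In a cylinder, a family of disjoint rim-to-rim paths may realize a cyclic shift (think of $p$ parallel spirals), in which case no single path closes into a cycle of $G$ at all --- the union closes into fewer, longer cycles, ruining both the count and the homotopy type (and even in the good case a cycle crossing $\pi$ once only has class $\alpha\pi^{j}$, not necessarily the class of $\alpha$ as the Claim asserts). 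The paper instead cuts along $\alpha$ itself, so the desired cycles wind \emph{around} the cylinder; it then takes a transverse arc $\delta$ meeting the graph in the minimum number $p$ of points (vertices) and cuts the cylinder open along $\delta$ into a rectangle. There, planarity of the disc forces each of the $p$ disjoint Menger paths to join the two copies $w_i^1,w_i^2$ of the same vertex, so re-identifying closes every path into a cycle homotopic to $\alpha$. Your route could be repaired by an analogous second cut and order-preservation argument, but as written the key assertion ``each such path closes up to a cycle homotopic to $\alpha$'' is unjustified and, without further work, false.

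A second, smaller inaccuracy is in your cut bound. A small vertex set separating the two rims of your cylinder corresponds to a loop going \emph{around} the cylinder, i.e.\ to a dual cycle homotopic to $\pi$; it does not ``cross $\pi$'', and contradicting $\ew(G^*)=k$ alone would only yield $k/\lfloor\Delta/2\rfloor$ paths, which is too weak. What you need is that such a dual cycle is odd-leaping $\alpha$ and hence (by the parity argument over its $\alpha$-ears, as in the proof of Lemma~\ref{lem:odd-stretch}) contains an $\alpha$-switching ear, so its length is at least $\ell$; only then does the degree translation give the bound $\ell/\lfloor\Delta/2\rfloor$. In the paper's setup this is immediate and needs no parity argument: the minimal transverse arc $\delta$ itself yields an $\alpha$-switching dual ear of length at most $p\lfloor\Delta/2\rfloor$, whence $p\geq\ell/\lfloor\Delta/2\rfloor$.
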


\begin{proof}\useInnerQed
Let $F$ be the set of those edges of $G$ intersected by $\alpha$.  Let $\alpha_1, \alpha_2$ be loops very close to and homotopic to
$\alpha$, one to each side of $\alpha$, so that the cylinder bounded by $\alpha_1$ and
$\alpha_2$ that contains $\alpha$ intersects $G$ only in the edges of
$F$. Now we cut the torus by removing the (open) cylinder bounded by
$\alpha_1$ and $\alpha_2$, thus leaving an embedded graph $H:=G-F$ on a
cylinder $\cylin$  with boundary curves (``rims'') $\alpha_1$ and $\alpha_2$. 
Let $\delta$ be a curve on $\cylin$ connecting a point of $\alpha_1$
to a point of $\alpha_2$, such that $\delta$ has the fewest possible points in
common with the embedding $H$. We note that we may clearly assume that
the $p$ points in which $\delta$ intersects $H$ are vertices.

We claim that 
$p\geq \frac \ell{\lfloor\Delta/2\rfloor}$.
Indeed, if $p< \frac \ell{\lfloor\Delta/2\rfloor}$,
then the union of all faces incident with the $p$ vertices 
intersected by $\delta$ would contain a dual path $\beta$ of
length at most $p\cdot \lfloor\Delta/2\rfloor <
\frac \ell{\lfloor\Delta/2\rfloor}\cdot
\lfloor\Delta/2\rfloor=\ell$. Such $\beta$ would be an
$\alpha$-switching dual ear in $G^*$ of length less than $\ell$, a
contradiction.  

We now cut open the cylinder $\cylin$ along $\delta$, duplicating each
vertex intersected by $\delta$. As a result we obtain a graph $H'$
embedded in the rectangle with sides $\alpha_1,\delta_1,\alpha_2,\delta_2$ in
this cyclic order, so that $\delta_1$ (respectively, $\delta_2$)
contains $p$ vertices $w_i^1, i=1,2,\ldots,p$ (respectively, $w_i^2, i=1,2,\ldots,p$).

We note that there is no vertex cut of size at most $p-1$ in $H'$
separating 
$\{w_1^1,\dots,w_p^1\}$ from
$\{w_1^2,\ldots,w_p^2\}$, as such a vertex cut would imply the
existence of a curve $\varepsilon$ from $\alpha_1$
to $\alpha_2$ on $\cylin$ intersecting $H$ in fewer than $p$ points,
contradicting our choice of $\delta$.
Thus applying Menger's Theorem 
we obtain $p$ pairwise disjoint paths 
from $\{w_1^1,\dots,w_p^1\}$ to
$\{w_1^2,\ldots,w_p^2\}$ in $H'$.
Moreover, it follows by planarity of $H'$ that each of these paths
connects $w_i^1$ to the corresponding $w_i^2$ for $i=1,\dots,p$. By
identifying back $w_i^1$ and $w_i^2$ for $i=1,\dots,p$, we get 
a collection of $p$ pairwise disjoint cycles in~$H$, each of them
homotopic to $\alpha$.
\end{proof}

We have thus proved the existence of a collection $\cc$ of $\ell/\dee$
pairwise disjoint, pairwise homotopic noncontractible cycles. 
We have $\fw(G) \ge \ewnd(G)/\dee=k/\dee\geq5$ by Lemma~\ref{lem:ewdfw}.
So, by Theorem~\ref{thm:deGS}, we obtain
that $G$ also contains two collections $\dd,\ee$ of cycles such that:
(i) the cycles in $\dd$ are noncontractible, pairwise disjoint, and
pairwise homotopic; (ii)
the cycles in $\ee$ are noncontractible, pairwise disjoint, and
pairwise homotopic; (iii) for any $D\in\dd$ and $E\in\ee$, the pair
$(D,E)$ is a basis; and (iv) each of $|\dd|$ and $|\ee|$ is at least
$\big\lfloor\frac23\lceil\frac k{\lfloor\Delta(G)/2\rfloor}\rceil\big\rfloor$.

Let $C\in\cc$, $D\in\dd$, and $E\in\ee$. From
properties (i)--(iii) it follows that either $(C,D)$ or $(C,E)$ is a basis. 
Therefore, Theorem~\ref{thm:two-cycle-families} guarantees the existence of
a toroidal grid minor of size
$$
\left\lceil\frac \ell{\lfloor\Delta(G)/2\rfloor}\right\rceil
		\>\times\>
	\left\lfloor\frac23\left\lceil
		\frac k{\lfloor\Delta(G)/2\rfloor}
        \right\rceil\right\rfloor
\,.\qedhere$$\smallskip
\end{proof}

\begin{proof}[\bf Proof of Corollary~\ref{cor:agrid-torus}]
Let $k:=\ewnd(G)$, and let $\ell$ and $\alpha$ be as in
Theorem~\ref{thm:agrid-torus}. 
By Lemma~\ref{lem:thstr}, \mbox{$\stretchd(G)\leq 2k\ell$}.
Let $r=\big\lceil\frac k{\lfloor\Delta(G)/2\rfloor}\big\rceil$. Since
$r\geq5$, it follows that $\lfloor2r/3\rfloor\geq\frac67(2r/3)=\frac47r$
(with equality at $r=7$).
Letting $s=\big\lceil\frac\ell{\lfloor\Delta(G)/2\rfloor}\big\rceil$
we then get, by Theorem~\ref{thm:agrid-torus},
$$\Tex(G) \geq s\cdot\left\lfloor\frac23r\right\rfloor\geq 
	\frac47rs \geq \frac47k\ell \cdot\big\lfloor{\Delta(G)}/2\big\rfloor^{-2}
	\geq \frac27\stretchd(G) \cdot\big\lfloor{\Delta(G)}/2\big\rfloor^{-2}
\,.$$

The previous unconditional estimate can be improved, for any fixed $\varepsilon>0$ and
all sufficiently large $k$, as follows.
If $\ell\geq\frac23k$ then, by Lemma~\ref{lem:thstr},
$\stretchd(G)\leq k(\ell+k/2)\leq k(\ell+3\ell/4)=\frac74k\ell$ and
\begin{eqnarray*}
\Tex(G) &\geq& s\cdot\left\lfloor\frac23r\right\rfloor
	\geq \frac\ell{\lfloor\Delta(G)/2\rfloor} \cdot
		\left\lfloor\frac23 %\cdot
		 \frac k{\lfloor\Delta(G)/2\rfloor}\right\rfloor
	\geq \ell\cdot\left\lfloor\frac23k\right\rfloor
		\cdot\big\lfloor{\Delta(G)}/2\big\rfloor^{-2}
\\
 	&\geq& \left(\frac23\cdot\frac47-\varepsilon\right) \cdot\frac74 k\ell
		 \cdot\big\lfloor{\Delta(G)}/2\big\rfloor^{-2}
 	\geq \left(\frac8{21}-\varepsilon\right) \cdot\stretchd(G)
		 \cdot\big\lfloor{\Delta(G)}/2\big\rfloor^{-2}
\,.\end{eqnarray*}
Otherwise, $\ell<\frac23k$ and we let $k=\alpha\ell$ where $\alpha>\frac32$.
We similarly have $\stretchd(G)\leq k(\ell+k/2)=\frac{\alpha+2}2k\ell$
and we can directly use Theorem~\ref{thm:deGS} to estimate
(where $\varepsilon'=\frac{\alpha+2}{2\alpha}\varepsilon>0$)
\begin{eqnarray*}
\Tex(G) &\geq& \left\lfloor\frac23r\right\rfloor^2 \geq 
	\left(\frac49-\varepsilon'\right) k\alpha\ell
		 \cdot\big\lfloor{\Delta(G)}/2\big\rfloor^{-2}
\\
	&\geq& \left(\frac49-\varepsilon'\right)\frac{2\alpha}{\alpha+2}
		\cdot \frac{\alpha+2}2 k\ell
		 \cdot\big\lfloor{\Delta(G)}/2\big\rfloor^{-2}
	= \left(\frac{8\alpha}{9\alpha+18}-\varepsilon\right)
		\cdot \stretchd(G)
		 \cdot\big\lfloor{\Delta(G)}/2\big\rfloor^{-2}
\,.\end{eqnarray*}
Now for $\alpha\geq\frac32$ we have
$\frac{8\alpha}{9\alpha+18}\geq\frac8{21}$.
\end{proof}

\section{Drawing embedded graphs into the plane}
\label{sec:drawing-upper}

In this section, we prove Theorem~\ref{thm:upper-cr}. That is, we provide an efficient
algorithm that, given a graph $G$ embedded in some orientable surface, yields a drawing of $G$
(with a controlled number of crossings) in the plane.
We start with an informal outline of the proof.

We proceed in $g$ steps, working at the $i$-th step with the pair
$(G_i^*,\gamma_i)$. For convenience, let $G_0=G$, and define
$F_i=E(G_{i-1})\sem E(G_{i})=E(\gamma_i)$. The idea at the $i$-th step
is to cut from $G_{i-1}$ the edges intersected by $\gamma_i$ (that is, the set
$F_i$). We could then draw these edges into the embedded graph $G_i$ along
the route determined by a $\gamma_i$-switching ear of length $\ell_i$
in $G_{i-1}$. This would result in at most $k_i(\ell_i+k_i)$ new crossings in $G_i$
(similarly as in Figure~\ref{fig:addedges}).
We consider routing
all the edges of $F_i$ in one bunch (i.e., along the same route), 
even though routing every edge separately could perhaps save a small number of crossings.
We have two reasons for this treatment; it makes the proofs simpler
(and it would be very hard to gain any improvement in the
worst-case approximation bound by individual routing, anyway),
and the algorithm has slightly better runtime.

In reality, the situation is not as simple as in the previous sketch.
The main complication comes from the fact that subsequent cutting (in step
$j>i$) could ``destroy'' the chosen route for $F_i$.
Then it would be necessary to perform further re-routing for a part or all of the edges 
of $F_i$ in step $j$ along a route for $F_j$ (costing up to $k_i\ell_j$ additional crossings).
This could essentially happen in each subsequent step until the end of
the process at~$j=g$.

We handle this complication in two ways:
Proof-wise, we track a possible insertion route (and its necessary modifications)
for $F_i$ through the full cutting process. In particular, we show that
the final insertion route for $F_i$ is never longer than 
$\ell_i+\ell_{i+1}+\dots+\ell_g$, for each index $i$, 
which constitutes an upper bound on the final algorithmic solution.
We also have to take care of the following detail;
that a detour for the route of $F_i$ at any step $j>i$ does not produce
significantly more additional crossings than $k_j\ell_j$ --
this holds as long as $k_j$ is never much smaller than $k_i$ (cf.~Lemma~\ref{lem:dew2}).

Algorithmically, we will reinsert all the edges $E(G)\sem E(G_{g})$
only at the very end, into $G_g$. 
For that we find shortest insertion routes for the
(subsets of the) edges of $F_i$,
$i\in\{1,\dots,g\}$ independently,
which is algorithmically a very easy solution, and we moreover iteratively ensure
that no two insertion routes cross each other more than once.
\smallskip

\begin{proof}[{\bf Proof of Theorem~\ref{thm:upper-cr}}]
As outlined in the sketch above, we proceed in $g$ steps. At the
$i$-th step, for $i=1,2,\ldots,g$, we take the embedded graph
$G_{i-1}$ and cut the surface open along
$\gamma_i$, thus severing the edges in the set
$F_i:=E(G_{i-1})\setminus E(G_i)=E(\gamma_i)$. This decreases the
genus by one, and creates two holes, which we repair by pasting a
closed disc on each hole. Thus we get the graph $G_i$ embedded in a
compact surface with no holes.

\begin{claim}\label{cl:sumell}
Let $i\in\{1,\ldots,g\}$, and let $f$ be an edge in $F_i$. Then
$f$ can be drawn into the plane graph $G_g$ with at most $\sum_{j=i}^g\ell_j$ crossings. 
\end{claim}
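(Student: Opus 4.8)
The strategy is to track, through the whole cutting process, a single insertion route for the edges of $F_i$, showing that at each step the route can be repaired at an additive cost of at most $\ell_j$, so that after step $g$ it lives in the plane graph $G_g$ and has total length at most $\sum_{j=i}^g\ell_j$; then reinserting $f$ (indeed, all of $F_i$ in one bunch) along that route gives the claimed crossing bound. First I would fix $i$ and describe the route right after step $i$: by Definition~\ref{def:good-planarizing}, $\gamma_i$ has a switching dual ear $\eta_i$ of length $\ell_i$ in $G_{i-1}^*$, and cutting $G_{i-1}$ along $\gamma_i$ yields two discs whose rims carry the severed half-edges of $F_i$; the dual path $\eta_i$, which ran from one side of $\gamma_i$ to the other, becomes in $G_i$ a route (a sequence of at most $\ell_i$ faces of $G_i$) joining the two pasted discs, along which all of $F_i$ can be threaded. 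So immediately after step $i$ there is an insertion route $\rho_i$ for $F_i$ in $G_i$ of length $\le \ell_i$.

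\medskip
The inductive step is the heart of the argument. Suppose after step $j-1$ (with $i\le j-1<g$) we have an insertion route $\rho$ for $F_i$ in $G_{j-1}$ passing through at most $\ell_i+\ell_{i+1}+\dots+\ell_{j-1}$ faces. At step $j$ we cut $G_{j-1}$ along $\gamma_j$: the only way $\rho$ can be ``broken'' is where it crosses the dual cycle $\gamma_j$, i.e.\ passes through a face of $G_{j-1}$ incident to an edge of $F_j=E(\gamma_j)$. At each such break point I would reconnect the two pieces of $\rho$ by detouring along (a piece of) the switching dual ear $\eta_j$ of $\gamma_j$, exactly as in step $i$ above, using that $\eta_j$ joins the two sides of $\gamma_j$ and hence in $G_j$ joins the two pasted discs. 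The key quantitative point is that \emph{all} break points can be repaired using in total at most one full traversal of $\eta_j$ — because the segments of $\rho$ cut out by $\gamma_j$ can be rerouted so as to follow disjoint sub-ears, or equivalently one reroutes the whole bunch $F_i$ together along $\eta_j$ once — so the route length grows by at most $\ell_j$, giving an insertion route in $G_j$ of length $\le \sum_{t=i}^{j}\ell_t$. After step $g$ the surface is planar, $G_g$ is a plane graph, and we have an insertion route for $F_i$ (hence for $f$) of length at most $\sum_{j=i}^g\ell_j$; drawing $f$ along it, it crosses at most that many edges, which is the claim. (Here I use that an insertion route of length $L$ — a walk through $L$ faces — lets one draw an edge with at most $L$ crossings, crossing the boundary between consecutive faces.)

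\medskip
\textbf{Main obstacle.} The delicate point, and the one I expect to need the most care, is the bookkeeping when $\rho$ is broken in \emph{several} places by $\gamma_j$, or when it is broken by $\gamma_j$ at a face that is \emph{also} where an earlier route for some $F_{i'}$ ($i'<i$) passes — one must be sure the detours do not interact in a way that inflates the count beyond $\ell_j$, and that the ``one traversal of $\eta_j$ suffices'' claim is genuinely valid rather than merely plausible. This is exactly the subtlety the informal outline flags (``subsequent cutting could destroy the chosen route''), and handling it cleanly is what forces the proof to reroute the whole bunch $F_i$ together and to invoke the switching property of $\eta_j$ (so the detour genuinely reaches the far side of the cut); I would isolate this as a small sublemma about rerouting a collection of disjoint face-walks across a single cutting cycle. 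The secondary point to record — used later but worth noting here — is that $k_j$ is never much smaller than $k_i$ by Lemma~\ref{lem:dew2}, so the number of \emph{crossings} produced by the detour (not just its length) stays within the target $O(\max_j k_j\ell_j)$; but for the present Claim only the length bound $\sum_{j=i}^g\ell_j$ is asserted, so that estimate is deferred.
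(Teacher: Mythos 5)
Your overall strategy is exactly the paper's: maintain an insertion route for the severed edges across the subsequent cuts, repairing it at step $j$ by a $\gamma_j$-switching dual ear at additive cost $\ell_j$, so that the final route in $G_g$ has length at most $\sum_{j\ge i}\ell_j$. However, two points that the paper settles are left unresolved in your sketch, and the second of them is patched with a phrase that is not actually an argument.

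First, the endpoints. The claim requires a route that, in the final plane graph $G_g$, terminates at faces incident with the endvertices $f_a,f_b$ of $f$; but a later cut $\gamma_j$ may pass through (and thereby destroy) precisely the face at which your route terminates, and you never specify which face of $G_j$ inherits that role, nor why it is still incident with $f_a$. The paper does this bookkeeping explicitly via the recursively defined heir faces $a_j(f),b_j(f)$, determined by the angle at $f_a$ (resp.\ $f_b$) in the rotation system, and proves the length bound for the dual distance between these heirs. Second, your flagged ``main obstacle'' (the route broken by $\gamma_j$ in several places) is resolved only by the suggestion that the cut-out segments ``follow disjoint sub-ears,'' which need not be possible and in any case is not what is needed. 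The paper's resolution is simpler and makes the obstacle disappear: at step $j$ it does not repair a fixed route but takes a fresh shortest dual path $\pi$ in $G_{j-1}$ between the heirs (of length at most $\ell_i+\dots+\ell_{j-1}$ by induction), and if $\pi$ meets $\gamma_j$ it discards \emph{everything} between the first and the last intersection; after the cut the two surviving pieces end on the two new faces, and a single $\gamma_j$-switching ear of length $\ell_j$ bridges them (or is unnecessary if they end on the same new face). Thus one ear per step suffices in all cases, with no sublemma about rerouting collections of face-walks; also note the claim is stated per edge $f$, so the ``whole bunch $F_i$'' framing plays no role here (it only matters later when counting crossings among the reinserted edges). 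With these two repairs your outline becomes the paper's proof.
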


\begin{proof}\useInnerQed
Let $i\in\{1,\ldots,g\}$ be fixed. In the graph $G_i$, we let $a,b$ denote the two new faces created by
cutting $G_{i-1}$ along $\gamma_i$ (thus each of these faces contains
one of the pasted closed discs). Let $f$ be an edge in $F_i$, with
end vertices $f_a$ (incident with face $a$ in $G_i$) and $f_b$ (incident with
face $b$ in $G_i$). 

For each $j\in\{i,\ldots,g\}$, we associate two unique 
faces $a_j(f), b_j(f)$ of $G_j$ with the edge $f$. Loosely speaking, these faces are
the natural heirs in $G_j$ of the faces $a$ and $b$, if we stand in
$G_j$ on the vertices $f_a$ and $f_b$.
This can be (still rather informally) defined recursively as follows. 
First, let $a_i(f) = a$ and $b_i(f) = b$. 
Now suppose $a_{j-1}(f), b_{j-1}(f)$ have been defined for some $j$, $i<j\le g$. 
We then let $a_j(f)$ be the unique face of $G_j$ which contains
the points of face $a$ in a small neighborhood of~$f_a$.
The face $b_j(f)$ is defined analogously. 
In regard of this definition, we point out that $a,b$ are faces in $G_i$, 
but by the further cutting process, they may not be faces in $G_j$ for some $j>i$.
An alternative formal (and discrete) definition of $a_j(f)$ may be given as follows:
let $e_1,e_2$ be the two edges of $a_{j-1}(f)$ incident with~$f_a$ in $G_{j-1}$. 
In the cyclic ordering of edges of $G_{j-1}$ around~$f_a$,
we assume that $e_1$ is right before $e_2$, and
we find $e_1',e_2'$ such that (i) $e_1'$ is the last edge preceding or equal
to $e_1$ and $e_2'$ is the first edge succeeding or equal to $e_2$, and (ii)
$e_1',e_2'\in E(G_j)$.
Then $e_1',e_2'$ are consecutive edges in the cyclic ordering of edges
around~$f_a$ in the graph $G_j\subseteq G_{j-1}$, and hence $e_1',e_2'$ in
this order define a unique face $a_j(f)$ of $G_j$ incident to~$f_a$.

The vertex $f_a$ (respectively, $f_b$) is incident to the face $a_g(f)$ (respectively, $b_g(f)$) in the plane embedding $G_g$.
To finish the claim, it suffices to show that the dual distance
between $a_g(f)$ and $b_g(f)$ in $G_g$ is at most 
$\sum_{j=i}^g\ell_j$. 
We prove this by induction over $j=i,i+1,\dots,g$, i.e., we show that 
the dual distance between $a_j(f)$ and $b_j(f)$ in $G_j$ is at most
$\ell_i+\ell_{i+1}+\dots+\ell_j$.

This holds (with equality) for $j=i$ by the definition of $\ell_i$.
For $j>i$, take a shortest dual path $\pi$ in $G_{j-1}$
connecting $a_{j-1}(f)$ to $b_{j-1}(f)$.
If $\pi$ does not intersect $\gamma_j$, then $\pi$ is also a dual path
from $a_{j}(f)$ to $b_{j}(f)$ in $G_j$ and we are done.
Otherwise, we denote by $\pi_a$ and $\pi_b$ the subpaths of $\pi$ from 
the ends $a_{j-1}(f)$ and $b_{j-1}(f)$, respectively, to the nearest
intersections with $\gamma_j$.
It may happen that $\pi_a$ or $\pi_b$ consist of a single vertex.
Let $\pi_a'$ and $\pi_b'$ denote the inherited paths in $G_j$;
they differ from $\pi_a$ and $\pi_b$ (respectively) only in their 
former ends on $\gamma_j$, which are now among the two new faces
$c_1,c_2$ of~$G_j$ that have been created by cutting $G_{j-1}$.
If, say, $c_1$ is an end of both $\pi_a'$ and $\pi_b'$, then
$\pi_a'\cup\pi_b'$ is a dual path from $a_{j}(f)$ to $b_{j}(f)$ 
in $G_j$ of length at most $\len(\pi)$ and we are again done.
On the other hand, if $\pi_a'\cap\pi_b'=\emptyset$, then we can make
a dual path $\pi'$ in $G_j$ as the union of $\pi_a'\cup\pi_b'$ with a
$\gamma_j$-switching ear of length $\ell_j$.
Then the dual distance between $a_j(f)$ and $b_j(f)$ is at most
$\len(\pi)+\ell_j\leq \ell_i+\dots+\ell_{j-1}+\ell_j$, as claimed.
\end{proof}

Now recall that $|F_i|=k_i$, for $i=1,\ldots,g$. 
From Claim~\ref{cl:sumell} it follows that the edges
in $F_i$ can be added to the plane embedding $G_g$ by introducing at
most $k_i \cdot \sum_{j=i}^g\ell_j$ crossings with the edges of
$G_g$. This measure disregards any additionally crossings arising between edges of~$F_i$.
Though, in the worst case scenario each edge of
$F:= F_1 \cup F_2 \cup \cdots \cup F_g =E(G)\sem E(G_{g})$ crosses each other edge from~$F$.
Since, by the natural arc-exchange argument, we may assume that
every two edges of $F$ cross at most once (without impact on the number of
crossings between $F$ and $E(G_g)$),
the edges of $F$ can be added to the plane embedding $G_g$ by introducing at most 
$\sum_{i=1}^g \left(k_i\cdot\sum_{j=i}^g \ell_j+k_i\cdot\sum_{j=i}^g k_j\right)$
crossings. 
Using that $2\ell_i\geq k_i$ (cf.~Lemma~\ref{lem:kl2}), this
process yields a drawing of $G$ in the plane with at most
\begin{align*}
\sum_{i=1}^g \left(k_i\cdot\sum_{j=i}^g (k_j+\ell_j)\right)\;&\leq\;
  \sum_{i=1}^g \left(k_i\cdot\sum_{j=i}^g 3\ell_j\right)
  =\; 3\sum_{j=1}^g \left(\ell_j\cdot\sum_{i=1}^j k_i\right)
\end{align*}
crossings. The inductive application of Lemma~\ref{lem:dew2} yields
$k_i\leq2^{j-i}k_j$ for all $1\leq i<j\leq g$. Therefore
\begin{align}\nonumber
3\sum_{j=1}^g \left(\ell_j\cdot\sum_{i=1}^j k_i\right)\;&\leq\;
 3\sum_{j=1}^g \ell_jk_j(2^{j-1}+\dots+2^1+2^0)
\\\nonumber
  &=\;  3\sum_{j=1}^g k_j\ell_j(2^{j}-1)
\\\nonumber
  &\leq\; 3\max_{1\le i \le g} \{k_i\ell_i\}\cdot(2^1+2^2+\dots+2^g-g)
\\\label{eq:3M}
  &=\; 3\cdot(2^{g+1}-2-g)\cdot \max_{1\le i \le g} \{k_i\ell_i\} =:\chi\,.
\end{align}

We have thus shown that the plane embedding $G_g$ can be extended into
a drawing of $G$ with at most 
$\chi$ crossings as in \eqref{eq:3M}. It remains to show how such a drawing 
can be computed efficiently from an embedding of $G$ in $\Sigma_g$.
The algorithm runs two phases:
\begin{enumerate}
\item
A good planarizing sequence $(G_1^*,\gamma_1), \dots, (G_g^*,\gamma_g)$ for
$G^*$ is computed using $g$ calls to an $\ca O(n\log n)$ algorithm of Kutz~\cite{Ku06},
or to a faster $\ca O(n\log\log n)$ algorithm of Italiano et al.~\cite{ItalianoNSW11},
which both can find a cycle witnessing nonseparating edge-width in orientable surfaces.
These runtime bounds assume $g$ fixed.
During the computation, we represent $G^*$ by its rotation scheme which
allows a sufficiently fast implementation of the cutting operation as well.
\item
In the planar graph $G_g$, optimal
insertion routes are found for all the missing edges $F=E(G)\sem E(G_g)$
independently using linear-time breadth-first search in $G_g^*$.
A key observation with respect to runtime 
is that we are looking for these insertion routes only between
the predefined pairs of faces $a_g(f)$ and $b_g(f)$ for each $f\in F$, and
each of $\{a_g(f) \ : f\in F_i\ \}$ and $\{b_g(f) \ : f\in F_i\ \}$
has at most $2^{g-i}$ elements for $i=1,2,\ldots,g$.
(From the practical point of view, it may be worthwhile to mention that $|G_g|$
also serves as a natural upper bound for the number of considered faces.)
It follows that we need to perform searches for at most $2^{g-1}+\dots+2^1+2^0<2^g$
routes in total (independently of $|F|$), 
a process that takes an overall linear time for fixed~$g$.
Further, by a routine post-processing after each computed route we ensure
that no two routes cross each other more than once, again in total linear time
since the number of pairs of compared routes is bounded in~$g$.

It follows that we need to perform at most $2^{g-1}+\dots+2^1+2^0<2^g$ searches in total 
(independently of $|F|$), a process that takes an overall linear time for fixed~$g$.
Then, a routine algorithm inserts the individual edges of $F$ along the
computed routes in $G_g$, making dummy vertices for the induced crossings.
This routine takes $\ca O(n+\chi)$ steps where $\chi$ is as in \eqref{eq:3M}.
Moreover, $\chi=\ca O(\crg(G)\Delta^2)$ by Lemma~\ref{lem:kl-to-tex},
and $\crg(G)=\ca O(n\Delta)$ by \cite{DV12}.
\end{enumerate}
In view of this, the overall runtime of the algorithm is 
$\ca O\big(n(\log\log n+\Delta^3)\big)$ for each fixed $g$.
\end{proof}

\section{More properties of stretch}
\label{sec:more}

In this section, we establish several basic properties on the stretch
of an embedded graph. Even though we could have alternatively included
these in the next section, as we only require them in the proof of
Lemma~\ref{lem:kl-to-stretch}, we prefer to present them in a separate
section, for an easier further reference of the basic properties of
this new parameter which may be of independent interest.

We recall that a graph property $\ca P$ satisfies the 
{\em$3$-path condition} (cf.\ \cite[Section~4.3]{MT01}) if the following holds:
Let $T$ be a {\em theta graph} (a union of three internally disjoint paths with
common endpoints) such that two of the three cycles of $T$ do not possess $\ca P$;
then neither does the third cycle.
In the proof of the following lemma we make use of halfedges. A {\em halfedge} is a pair
$\langle e,v\rangle$ (``$e$~at~$v$''), where $e$ is an edge and $v$ is
one of the two ends of $e$.

\begin{lemma}\label{lem:3pp}
Let $G$ be embedded on an orientable surface, and let $C$ be a
cycle of $G$. The set of cycles of $G$ satisfies the $3$-path
condition for the property of odd-leaping $C$.
Furthermore, not all three cycles in any theta subgraph of $G$ can be odd-leaping $C$.
\end{lemma}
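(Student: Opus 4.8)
The statement has two parts. For the $3$-path condition, let $T = P_1 \cup P_2 \cup P_3$ be a theta graph with the three paths sharing the two endpoints $u,v$, and write $A_1 = P_2 \cup P_3$, $A_2 = P_1 \cup P_3$, $A_3 = P_1 \cup P_2$ for its three cycles. Assume $A_1$ and $A_2$ each odd-leap $C$; we must show $A_3$ does as well. The key idea is to count, modulo $2$, the leaps of each $A_i$ with $C$ in terms of contributions coming from the three paths $P_1, P_2, P_3$ individually plus contributions localized at the two branch vertices $u,v$. Away from $u$ and $v$, a transversal crossing of $C$ with the interior of $P_m$ contributes to exactly those $A_i$ that contain $P_m$, i.e.\ to exactly two of the three cycles; hence the "interior" contributions cancel in the sum $A_1 \oplus A_2 \oplus A_3$ (mod $2$). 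So the parity of (number of leaps of $A_i$ with $C$) differs from a fixed quantity only by local terms at $u$ and $v$, and one checks those local terms also behave additively so that $A_1, A_2, A_3$ satisfy the mod-$2$ relation "(leap parity of $A_1$) $+$ (leap parity of $A_2$) $+$ (leap parity of $A_3$) $\equiv$ (something depending only on how $C$ meets the branch vertices, but in fact $\equiv 0$)." Granting this, if two of the $A_i$ are odd-leaping then the third is too.

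Here is where halfedges enter: at a branch vertex $w \in \{u,v\}$ that lies on $C$, the cycle $C$ passes through $w$ using two of the halfedges at $w$, while each path $P_m$ uses one halfedge at $w$; whether a component of $C \cap A_i$ through $w$ is a genuine leap depends on the cyclic (rotation) order at $w$ of the $C$-halfedges interleaved with the $A_i$-halfedges. The plan is to fix the rotation at $w$, label the $C$-halfedges and the $P_m$-halfedges, and verify by a short case analysis on the cyclic pattern that the "leap at $w$" indicator for $A_i$ equals an $\mathbb{F}_2$-linear function of which two paths $A_i$ uses at $w$ — and crucially that this linear function, summed over the three cycles, gives $0$. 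Combined with the interior cancellation (each interior transversal crossing lies on a unique $P_m$ and thus in exactly two cycles), this yields the desired mod-$2$ identity and hence the $3$-path condition. (One must also handle degenerate/non-transversal components of $C \cap A_i$ and components not passing through $u$ or $v$, but these contribute equally to the two cycles containing the relevant path, so they drop out of the sum as well.)

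For the second, stronger assertion — that not all three $A_i$ can odd-leap $C$ simultaneously — the same $\mathbb{F}_2$ bookkeeping does the job directly: the identity derived above says $(\text{leap parity of }A_1) + (\text{leap parity of }A_2) + (\text{leap parity of }A_3) \equiv 0 \pmod 2$, so they cannot all three be odd. Alternatively, one can phrase it homologically: the three cycles of a theta graph satisfy $[A_1] + [A_2] + [A_3] = 0$ in $H_1(\Sigma;\mathbb{F}_2)$ (since $P_1+P_2+P_3$ appears twice), the mod-$2$ intersection pairing with $[C]$ is $\mathbb{F}_2$-linear, and "odd-leaping $C$" is exactly "$\langle [A_i],[C]\rangle = 1$"; hence the three pairings sum to $0$ and cannot all equal $1$. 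I would likely present the homological version as the clean proof and use the explicit halfedge/rotation analysis only to justify the identification of "odd-leap count mod $2$" with "mod-$2$ algebraic intersection number," which is the one point needing care.

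**Main obstacle.** The technical heart is precisely that identification: showing that the number of leaps of $A$ and $C$, taken mod $2$, equals the $\mathbb{F}_2$-intersection number $\langle [A],[C]\rangle$ — or, at the level needed here, that it behaves additively over the theta decomposition. The subtlety is that $A \cap C$ as a graph intersection can have components that are nontrivial paths (not transversal crossings) and components sitting at the branch vertices $u,v$, so "count the leaps" is not literally "count transversal intersection points of two generic curves." The plan is to push each cycle to a nearby generic representative (or argue combinatorially with the rotation system, using halfedges to make the perturbation rigorous), check that this does not change the leap parity, and then invoke the standard fact that mod-$2$ geometric intersection number of generic closed curves equals the algebraic one. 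Everything else — the interior cancellation and the "two of three cycles contain each path" observation — is routine once that bridge is in place.
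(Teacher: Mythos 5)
Your proposal is correct, and its core idea---showing that the three leap parities sum to zero modulo $2$---is exactly the identity the paper establishes; but the proof you say you would actually present (the homological one) is a genuinely different route from the paper's. The paper stays entirely inside the combinatorial rotation-system formalism: it fixes one connected component $M$ of $C\cap T$ at a time, attaches to it the two halfedges of $C$ at its ends, contracts edges of $M$ to reduce to the cases where $M$ is a single vertex or a whole branch of the theta graph, and then does an explicit twelve-case rotation analysis at the two branch vertices showing that each component contributes an even number of leaps ($0$, $2$ or $4$) to the three cycles combined; summing over components gives both assertions. Your homological version ($[A_1]+[A_2]+[A_3]=0$ over $\mathbb{F}_2$, linearity of the mod-$2$ intersection pairing, and ``odd-leaping iff the pairing is $1$'') is valid and shorter, but it buys brevity by outsourcing the real work to the bridge you yourself flag: identifying leap parity with the $\mathbb{F}_2$-intersection number needs a perturbation (or halfedge) argument of roughly the same granularity as the paper's case analysis---indeed the paper states this equivalence in Section~2 and explicitly declines to use it, preferring a self-contained argument. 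One caution about your first, direct bookkeeping plan: a component of $C\cap T$ through a branch vertex---in particular one equal to an entire branch $T_m$, whose leap status for the two cycles containing $T_m$ depends jointly on the rotations at \emph{both} ends, while it meets the third cycle in two separate components---does not decompose into independent local terms at $u$ and at $v$; this is precisely where the paper's contraction-plus-case-analysis does the work, so to close your sketch you would either need that same analysis or fall back on the homological bridge.
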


\begin{proof}
Let a theta graph $T\subseteq G$ be formed by three paths
$T=T_1\cup T_2\cup T_3$ connecting the vertices $s,t$ in $G$.
If $C$ and $T$ are disjoint, the lemma obviously holds.
We consider a connected component $M$ of $C\cap T$.
If $M=C$, then the $3$-path condition again trivially holds.
Otherwise, $M$ is a path with ends $m_1,m_2$ in $G$.
We denote by $f_1,f_2$ the edges in $E(C)\sem E(M)$ incident with
$m_1,m_2$, respectively, and by $M^+$
the union of $M$ and the halfedges $\langle f_1,m_1\rangle$ and $\langle f_2,m_2\rangle$.
We show that the number $q$ of leaps of $M^+$ summed
over all three cycles in $T$ is always even.

If $m_i\not\in\{s,t\}$ for $i\in\{1,2\}$,
then contracting the edge of $M$ incident to $m_i$ 
clearly does not change the number~$q$.
Iteratively applying this argument, we can assume
that finally either (i) $m_1=m_2$ (and possibly $m_1\in\{s,t\}$),
or (ii) $m_1=s$, $m_2=t$, and $M=T_1$.
In case (i), $M^+$ leaps either none or two of the cycles of $T$ in
the single vertex $m_1$, and so $q\in\{0,2\}$. Thus we assume for the
rest of the proof that (ii) holds.

For $i=1,2,3$, let $e_i$ (respectively, $e_i'$) be the edge of $T_i$
incident with $s$ (respectively, $t$). By relabeling $e_1, e_2, e_3$
if needed, we may assume that the rotation around $s$ is one
of the cyclic 
permutations $(e_1,f_1,e_2,e_3)$ or $(e_1,e_2,f_1,e_3)$. The rotation
around $t$ could be any of the six cyclic permutations
of $e_1',e_2',e_3',f_2$. This yields a total of twelve possibilities
to explore. A routine analysis shows that in every case we get
$q\in\{0,2\}$, except for the case in which the rotation around $s$ is
$(e_1,e_2,f_1,e_3)$ and the rotation around $t$ is
$(e_1',e_2',f_2,e_3')$; in this case, 
$M^+$ leaps twice the cycle $T_2\cup T_3$, and $q=4$.

Altogether, the number of leaps of $C$ summed over all three cycles in
$T$ is even.
Hence the number of cycles of $T$ which are odd-leaping with $C$ 
is also even, and the $3$-path condition follows.
\end{proof}

The next claim shows that stretch (Definition~\ref{def:stretch}) could have
been equivalently defined as an {\em odd-stretch}, using pairs of odd-leaping
cycles instead of one-leaping cycles.

\begin{lemma}[Odd-stretch equals stretch]
\label{lem:odd-stretch}
Let $G$ be a graph embedded in an orientable surface. 
If $C,D$ is an odd-leaping pair of cycles in $G$,
then $\stretch(G)\leq\len(C)\cdot\len(D)$.
\end{lemma}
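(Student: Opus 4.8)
The plan is to turn the given odd-leaping pair into a one-leaping pair without increasing the product of the two lengths; the engine is the $3$-path condition of Lemma~\ref{lem:3pp}. If $C,D$ are already one-leaping there is nothing to prove, so suppose $C\cap D$ has exactly $k\ge 3$ components that are leaps (then $k$ is odd). As a harmless normalisation, contracting the edges of each component of $C\cap D$ makes every such component a single vertex and can only decrease $\len(C)$ and $\len(D)$, so we may assume all leaps (and the other components of $C\cap D$) are single vertices; one un-contracts at the end.

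The core is a reduction step producing, from $(C,D)$, a pair that is $(k-2)$-leaping and whose lengths have product at most $\len(C)\cdot\len(D)$; iterating down to $k=1$ and then quoting Definition~\ref{def:stretch} finishes the proof. Suppose we are given two leaps $M,M'$ that are consecutive along $C$ — meaning one of the two $C$-arcs between them, say $\gamma$, has no leap in its interior (so $\gamma$ meets $D$ only in $M,M'$) — and also consecutive along $D$, with $\delta$ the analogous leap-free $D$-arc. Then $D\cup\gamma$ is a theta subgraph of $G$, with cycles $D$, the ``bigon'' $\gamma\cup\delta$, and a third cycle $D'=\gamma\cup(D\setminus\delta)$. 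Since $\delta$ is leap-free it stays on one side of $C$, so along $\gamma$ the cycle $\gamma\cup\delta$ merely overlaps $C$ tangentially and the junctions $M,M'$ are tangential; hence $\gamma\cup\delta$ does not odd-leap $C$. As $D$ does odd-leap $C$ and, by Lemma~\ref{lem:3pp}, the number of cycles of the theta that odd-leap $C$ is even, the cycle $D'$ must odd-leap $C$; a direct count shows $(C,D')$ is $(k-2)$-leaping, the two old leaps $M,M'$ having become tangencies. Now $\len(D')=\len(\gamma)+\len(D\setminus\delta)$ and $\len(D)=\len(\delta)+\len(D\setminus\delta)$, so if $\len(\gamma)\le\len(\delta)$ we replace $D$ by $D'$; if instead $\len(\gamma)>\len(\delta)$, we run the symmetric argument on the theta graph $C\cup\delta$, whose cycles are $C$, the same bigon $\gamma\cup\delta$, and $C'=\delta\cup(C\setminus\gamma)$, to get a cycle $C'$ odd-leaping $D$ with $(C',D)$ being $(k-2)$-leaping and $\len(C')<\len(C)$. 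Either way the product of lengths does not increase and the number of leaps drops by two.

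The main obstacle is the very hypothesis of the reduction step: producing two leaps that are simultaneously consecutive along $C$ and along $D$ when $k\ge3$. Combinatorially this can fail (a ``pentagram'' pattern of five leaps has no such pair), so one must either show such patterns are not realisable by embedded cycles on the surface, or extract a different reducing move from the general configuration — e.g.\ by first passing to an intersection pattern minimal in a suitable sense, or by rerouting $C$ along a subpath that is not leap-minimal while correcting at the interior leaps it sweeps over. I also expect genuine care (but no new ideas) to be needed in the bookkeeping of the non-leap components of $C\cap D$ — the tangential touchings and the overlaps created by the rerouting — to make sure that $D'$ and $C'$ are honest cycles and that the leap count changes by exactly two; these are the parts I would expect to be long but routine once the leap-selection issue is resolved.
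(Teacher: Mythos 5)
Your reduction scheme has a genuine gap, and it is the one you flag yourself: the entire induction hinges on finding two leaps that are consecutive both along $C$ and along $D$, and such a pair need not exist — a ``pentagram''-type intersection pattern is realisable by two simple closed curves on an orientable surface, and nothing in the hypotheses excludes it. Everything after that point (the theta graph $D\cup\gamma$, the parity argument via Lemma~\ref{lem:3pp}, the replacement by $D'$ or $C'$) is conditional on that unproved selection step, so as written the argument only covers special intersection patterns. There are secondary problems too: ``consecutive along $C$'' only guarantees no \emph{leap} in the interior of $\gamma$, not that $\gamma$ is disjoint from $D$ there, so the tangency claim for the bigon needs more care; and after rerouting, the new cycle shares the arc $\gamma$ with $C$, so the number of leaps need not drop by exactly two (only its parity is controlled), which puts the termination of your induction in doubt even when a suitable pair of leaps does exist.

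The paper sidesteps all of this: it fixes an odd-leaping pair $C,D$ minimizing $\len(C)\cdot\len(D)$ and never reduces to a one-leaping pair at all. It decomposes $E(D)\setminus E(C)$ into $C$-ears; a parity argument gives at least one switching ear. If two or more ears are switching, the shorter one has length at most $\frac12\len(D)$ and Lemma~\ref{lem:thstr} immediately gives the bound. If exactly one ear $D_1$ is switching, Lemma~\ref{lem:3pp} combined with the minimality of the chosen pair shows each non-switching ear is at least as long as the $C$-distance between its ends, whence $\len(D_1)+s\le\len(D)$ (with $s$ the $C$-distance between the ends of $D_1$), and the switching-ear estimate of Lemma~\ref{lem:thstr} again yields $\stretch(G)\le\len(C)\cdot\len(D)$. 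To salvage your route you would need either a proof that a doubly-consecutive pair of leaps always exists (false in general) or a genuinely different reducing move; the minimal-pair-plus-switching-ear argument is the cleaner path.
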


\begin{proof}
We choose an odd-leaping pair $C,D$ that minimizes $\len(C)\cdot\len(D)$.
Up to symmetry, $\len(C)\leq\len(D)$.
Since $C\cap D\not=\emptyset$, there is a set
$\ca D=\{D_1,\dots,D_k\}$ of pairwise edge-disjoint
$C$-ears in $D$, such that 
$E(D_1)\cup\dots\cup E(D_k)=E(D)\sem E(C)$.
By a simple parity argument, there exists a $C$-switching ear in $\ca D$.
Hence if $|\ca D|=1$, then $C,D$ are one-leaping,
and the lemma immediately follows.

If more than one $C$-ear in $\ca D$ is switching, then we pick,
say, $D_1$ as the shorter of these.
By the choice of $D$ we have $\len(D_1)\leq\frac12\len(D)$, and so by
Lemma~\ref{lem:thstr} we have
$$\stretch(G)\leq\len(C)\cdot\left(\len(D_1)+\frac12\len(C)\right)
 \leq\len(C)\cdot\left(\frac12\len(D)+\frac12\len(D)\right) =
 \len(C)\cdot\len(D)
\,.$$

In the remaining case, we have that $|\ca D|>1$ and exactly one
$C$-ear in $\ca D$, say $D_1$, is switching.
Note that $D'$, the cycle formed by $D_1$ and a shorter section of~$C$,
is one-leaping~$C$, and it is thus enough to show that $\len(D')\leq\len(D)$.
Let $d_j$ for $j\in\{1,\dots,k\}$ denote the distance on $C$ between the
ends of~$D_j$.
Assume that for some $j\in\{2,\dots,k\}$ it is $d_j>\len(D_j)$.
Then both cycles of $C\cup D_j$ containing $D_j$ are shorter than $\len(C)$,
and one of them is odd-leaping with $D$ by Lemma~\ref{lem:3pp}.
This contradicts the choice of~$C$ (for the pair $C,D$, that is).
Hence $\len(D_j)\geq d_j$ for all $j\in\{2,\dots,k\}$.
Now, let $D''\subseteq D$ be the path formed by edges of $E(D)\sem E(D_1)$.
Then $D''$ has the same ends as~$D_1$, and since $\len(D_j)\geq d_j$ for
$j\in\{2,\dots,k\}$, we get that $\len(D'')=\len(D)-\len(D_1)\geq d_1$.
Altogether,
\begin{equation}
\stretch(G) \leq \len(C)\cdot\len(D')
 = \len(C)\cdot\left(\len(D_1)+d_1\right)
  \leq \len(C)\cdot\len(D).\tag*{\qedhere}\end{equation}
\end{proof}

\begin{lemma}
\label{lem:cutdew}
Let $H$ be a graph embedded in an orientable surface of genus $g\geq2$,
and let $A,B\subseteq H$ be a one-leaping pair of cycles
witnessing the stretch of $H$, such that
$\len(A)\leq\len(B)$.
Then $\ewn(H\cutt A)\geq\frac12\ewn(H)$.
\end{lemma}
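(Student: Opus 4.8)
The plan is to mimic the structure of the proof of Lemma~\ref{lem:dew2}, but with the extra care needed because $A$ is a stretch-witnessing cycle (not a shortest nonseparating cycle), so we cannot directly invoke Lemma~\ref{lem:kl2}. Let $a_1,a_2$ be the two vertices of $H\cutt A$ arising from the cut, i.e.\ $\{a_1,a_2\}=V(H\cutt A)\sem V(H)$, and let $D\subseteq H\cutt A$ be a nonseparating cycle with $\len(D)=\ewn(H\cutt A)$. As in Lemma~\ref{lem:dew2} I would split into cases according to how $D$ meets $\{a_1,a_2\}$. If $D$ avoids both, its lift $\hat D$ is a nonseparating cycle in $H$, giving $\ewn(H)\leq\len(D)$, which is more than enough. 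If $D$ hits both $a_1$ and $a_2$, then a subpath $P\subseteq D$ with ends $a_1,a_2$ lifts to an $A$-switching ear $\hat P$ in $H$; if $D$ hits exactly one of them, its lift $\hat D$ is an $A$-ear in $H$, which is either already a cycle (done, as in the first case) or, together with a subpath of $A$, forms a theta graph with three cycles $A_1,A_2,A_3$.

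The key new ingredient is to replace the use of Lemma~\ref{lem:kl2} by an argument using that $A,B$ witness $\stretch(H)$ and $\len(A)\leq\len(B)$. In the ``$A$-switching ear'' case, suppose for contradiction that $\len(\hat P)<\tfrac12\ewn(H)\leq\tfrac12\len(A)$. Then, exactly as in Lemma~\ref{lem:kl2}, the ends of $\hat P$ on $A$ cut $A$ into $A_1',A_2'$ with $\len(A_1')\leq\tfrac12\len(A)$, and $\hat P\cup A_1'$ is a cycle one-leaping with $A$ of length $<\len(A)\leq\len(B)$ —wait, I actually want to compare against the product. More precisely: $\hat P\cup A_1'$ and $A$ are one-leaping (as in Lemma~\ref{lem:thstr}), so $\stretch(H)\leq\len(A)\cdot\len(\hat P\cup A_1')\leq\len(A)\cdot\big(\len(\hat P)+\tfrac12\len(A)\big)<\len(A)\cdot\len(A)\leq\len(A)\cdot\len(B)=\stretch(H)$, a contradiction. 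Hence $\len(\hat P)\geq\tfrac12\ewn(H)$, so $\ewn(H\cutt A)=\len(D)\geq\len(\hat P)\geq\tfrac12\ewn(H)$, as desired.

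For the remaining ``single vertex'' case where $\hat D$ is not a cycle, I proceed exactly as in Lemma~\ref{lem:dew2}: the three cycles $A_1,A_2,A_3$ of the theta graph $\hat D\cup A$ are each nonseparating in $H$ (since $D$ is nonseparating in $H\cutt A$), hence each has length $\geq\ewn(H)$; summing and using that each edge of $\hat D\cup A$ lies in exactly two of the $A_i$ gives $2\len(A)+2\len(\hat D)=\len(A_1)+\len(A_2)+\len(A_3)\geq 3\ewn(H)$, and since $\len(A)=\ewn(H\cutt A)$... no: here $\len(A)\geq\ewn(H)$ need not be tight. But we only need $\len(\hat D)\geq\tfrac12\ewn(H)$, which does \emph{not} follow from this alone since $\len(A)$ could be large. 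So in this case I instead argue: if $\len(\hat D)<\tfrac12\ewn(H)$, pick the shortest subpath $A_0\subseteq A$ joining the ends of $\hat D$ on $A$; then $\hat D\cup A_0$ and $A$ are one-leaping, giving $\stretch(H)\leq\len(A)\big(\len(\hat D)+\tfrac12\len(A)\big)<\len(A)\cdot\len(A)\leq\stretch(H)$, the same contradiction. Thus $\len(D)=\len(\hat D)\geq\tfrac12\ewn(H)$ in all cases.

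The main obstacle is precisely the step where Lemma~\ref{lem:dew2} used Lemma~\ref{lem:kl2}: here $A$ is not a shortest nonseparating cycle, so short $A$-switching ears are not forbidden outright, and the contradiction must instead be extracted from the \emph{stretch}-minimality of the pair $(A,B)$ together with $\len(A)\leq\len(B)$ — which is why the hypothesis $\len(A)\leq\len(B)$ is essential and appears in the statement. Once one sees that every potential short cycle through $a_1$ or $a_2$ lifts to something that, glued with half of $A$, one-leaps $A$ and beats the product $\len(A)\len(B)$, all cases close uniformly.
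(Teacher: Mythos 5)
Your case analysis is the right skeleton, and your ``$D$ hits both $a_1,a_2$'' case is sound: there the lift $\hat P$ is an $A$-switching ear, so Lemma~\ref{lem:thstr} applies and the comparison with $\stretch(H)=\len(A)\cdot\len(B)\geq\len(A)^2$ gives the contradiction you want. The genuine gap is in the remaining case, where $D$ passes through exactly one of the two cut vertices and $\hat D$ is not a cycle. There the two end edges of the $A$-ear $\hat D$ are attached to $A$ on the \emph{same} side (that is precisely what passing only through $a_1$ means), so $\hat D$ is not $A$-switching, and the cycle $\hat D\cup A_0$ merely \emph{touches} $A$ along $A_0$ rather than meeting it transversely: the pair $\hat D\cup A_0,\,A$ is $0$-leaping, not one-leaping. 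Consequently the inequality $\stretch(H)\leq\len(A)\cdot\bigl(\len(\hat D)+\frac12\len(A)\bigr)$ has no justification (neither Definition~\ref{def:stretch} nor Lemma~\ref{lem:thstr} applies), and your contradiction collapses exactly in the case that is the real content of the lemma. Your closing remark that ``every potential short cycle through $a_1$ or $a_2$ lifts to something that, glued with half of $A$, one-leaps $A$'' is false for this non-switching case.

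This is where the paper uses the second witness cycle $B$, which your argument never exploits beyond the inequality $\len(A)\leq\len(B)$. In the paper's proof, for the theta graph $A\cup P$ (with $P=\hat D$ and $A=A_1\cup A_2$), Lemma~\ref{lem:3pp} guarantees that exactly two of its three cycles are odd-leaping $B$; one of them is $A$, so without loss of generality $A_1\cup P$ is the other. Lemma~\ref{lem:odd-stretch} then forces $\len(A_1\cup P)\cdot\len(B)\geq\stretch(H)=\len(A)\cdot\len(B)$, hence $\len(P)\geq\len(A_2)$, and since $A_2\cup P$ is nonseparating in $H$ one gets $\ewn(H)\leq\len(A_2\cup P)\leq2\len(P)\leq2\,\ewn(H\cutt A)$. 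Note that you cannot choose which of the two arcs of $A$ pairs with $P$ into the odd-leaping cycle, which is why the trick ``take the shorter arc $A_0$'' cannot work on its own; the argument genuinely needs the $3$-path condition for odd-leaping $B$ together with the odd-stretch characterization (this is the reason these two lemmas precede Lemma~\ref{lem:cutdew} in Section~\ref{sec:more}). To repair your proposal you would have to import exactly these ingredients.
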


\begin{proof}
Let $C$ be a nonseparating cycle in $H\cutt A$ of length $\ewn(H\cutt A)$.
If its lift $\hat C$ is a cycle again, then (since $\hat C$ is
nonseparating in $H$) 
$\ewn(H)\leq\len(\hat C)=\ewn(H\cutt A)$, and we are done.
Thus we may assume that $\hat C$ contains an $A$-ear $P\subseteq\hat C$
such that $A\cup P$ is a theta graph. Let 
$A_1,A_2\subseteq A$ be the subpaths into which the ends of $P$ divide $A$.
By Lemma~\ref{lem:3pp}, exactly two of the three cycles of $A\cup P$ are
odd-leaping with $B$. One of these cycles is $A$; let the other one, without loss of generality, be
$A_1\cup P$.
Then $\len(A_1\cup P)\geq\len(A)$ using Lemma~\ref{lem:odd-stretch},
and so $\len(P)\geq\len(A_2)$.
Furthermore, $A_2\cup P$ is nonseparating in $H$, and we conclude that
\begin{equation}\ewn(H)\leq\len(A_2\cup P)\leq
 2\len(P)\leq2\len(\hat C)=2\ewn(H\cutt A)
\,.\tag*{\qedhere}\end{equation}
\end{proof}

\smallskip
At this point, one may wonder why we do not use the
cutting paradigm 
as in Lemma~\ref{lem:cutdew} in a good planarizing sequence for
Theorem~\ref{thm:upper-cr} (Section~\ref{sec:drawing-upper}).
Indeed, it would seem that the same proof as in Section~\ref{sec:drawing-upper} works
in this new setting, and the added benefit would be an immediately matching
lower bound in the form provided by Corollary~\ref{cor:agrid-all}.
The caveat is that the proof of Theorem~\ref{thm:upper-cr} strongly
uses the fact that subsequent
cuts in a planarizing sequence do not involve {\em much fewer} edges
(recall ``$k_i\leq2^{j-i}k_j$ for all $1\leq i<j\leq g$'' from the proof).
If one cuts along the shortest cycle of a pair that witnesses the dual
stretch, then
the number of cut edges may jump up or down arbitrarily. Thus an attempted proof
along the lines of the proof we gave in
Section~\ref{sec:drawing-upper} would (inevitably?) fail at this
point.

\section{Proof of Lemma~\ref{lem:kl-to-stretch}}
\label{sec:finding}

Our aim in this section is to prove Lemma~\ref{lem:kl-to-stretch}. For easy
reference within this section, let us repeat its statement here:

{\def\thethm{\ref{lem:kl-to-stretch}}\addtocounter{thm}{-1}%
\begin{lemma}
Let $H$ be a graph embedded in the surface $\Sigma_g$.
Let $k:=\ewnd(H)$, and
let $\ell$ be the largest integer such that
there is a cycle $\gamma$ of length $k$ in $H^*$ whose shortest
$\gamma$-switching ear has length~$\ell$.
Assume $k\geq2^g$.
Then there exists an integer $g'$, $0< g'\leq g$, and a subgraph $H'$
of $H$ embedded in $\Sigma_{g'}$ such that 
$$
\ewnd(H')\geq 2^{g'-g}k
	\qquad\mbox{and}\qquad
\stretchd(H')\geq 2^{2g'-2g}\cdot k\ell
\,.$$
\end{lemma}}

We show that this lemma follows (quite easily, in fact, as we will see shortly) from
the statement of coming Lemma~\ref{lem:cutstep}, that involves the concept of {\em polarity} of a subgraph of an embedded graph. The proof of this auxiliary lemma will be presented in the next section.

Even though we might formally simply give the definition of polarity, state
Lemma~\ref{lem:cutstep}, and then give the proof of
Lemma~\ref{lem:kl-to-stretch}, it seems worthwhile to first devote a little time
to explaining the intuition behind the proof.  In particular, this will give us
the opportunity to argue how the notion of polarity arises naturally in the
process.

\subsection{Intuition}\label{subsec:intuition}

Recall that in the statement of Lemma~\ref{lem:kl-to-stretch} we have got a dual
cycle $\gamma$ that attains the dual edge-width $k:=\ewn(H^*)$, and a
$\gamma$-switching ear (say $\omega$) of length $\ell$.  One way to read the
lemma is the following.  There is no reason why $\gamma$ and $\omega$ should
witness $\stretchd(H)$; however, there is always a subgraph $H'$ of $H$,
embedded in a surface of genus $g'$ with $0<g'\le g$, such that
$\stretchd(H')$ is at least a constant times $k\ell$.

Now if $\stretchd(H)$ is already witnessed by $\gamma$ and a cycle
constructed with $\omega$ (and possibly a part of $\gamma$), then we are
done at once by letting $H':=H$.  Thus suppose that $\stretchd(H)$ is
witnessed by another pair $\alpha,\beta$ of dual cycles, with $\len(\alpha)\le
\len(\beta)$.  The idea is then to cut $H$ (and hence its host surface) along
$\alpha$, and analyze the possible outcomes.

Suppose that we cut along $\alpha$, and $\gamma,\omega$ remain intact but
still do not witness the stretch of the resulting graph.  Moreover, suppose
we repeatedly apply this process (keeping the good luck of affecting neither
$\gamma$ nor $\omega$, at any step) until we reach the torus.  Then it is
easy to see that we are done (by a repeated application of
Lemma~\ref{lem:cutdew}) by setting $H'$ to be the toroidal subgraph of $H$
obtained at this point.

The difficulty arises if, at some point in this process, we cut along a dual
cycle that intersects $\gamma$ or $\omega$ (or both).  
This is not necessarily bad; imagine, say, that $\alpha$ may be one- or odd-leaping
$\gamma$ (or a cycle constructed from $\omega$ and a part of~$\gamma$).
Then we can argue, using the technical tools from the previous sections,
that $\len(\alpha)\cdot\len(\beta)$ cannot be much smaller than $k\ell$.
On the other hand, there are situations (such as one in~Figure~\ref{fig:fig0})
in which we might seem to be doomed, since we get no usable relation between
$\len(\alpha)\cdot\len(\beta)$ and $k\ell$ straight away and, moreover,
we do not inherit from $\gamma$ any usable dual cycle which would host both
ends of~$\omega$ (to continue the cutting process).

The key point that saves the day is that, regardless of what happens to this
dual cycle $\gamma$ (either at the first cutting step, or at later ones),
the structure inherited from $\gamma$ still maintains enough resemblance to
a two-sided cycle, in the sense that we can still give meaningful sense to the idea of
a $\gamma$-switching ear.

To illustrate this idea, consider the scenario given in
Figure~\ref{fig:fig0}.  On the left hand side we have the dual cycle
$\gamma$, a $\gamma$-switching ear $\omega$, and a dual cycle $\alpha$.  In
this example $\gamma$ intersects $\alpha$, and so after cutting $H$ through
$\alpha$ to obtain $H\cutt\alpha$ (and $(H\cutt\alpha)^*$), the dual
subgraph $\gamma'$ inherited from $\gamma$ consists of two cycles (see the
right hand side of the figure).  Now in this (still relatively easy)
scenario we have that $\omega$ survives the process intact; in general this
is not the case, but let us assume this for the current illustration
purposes in which we focus on what happens to $\gamma$.

\begin{figure}
\def\svgwidth{0.48\textwidth}
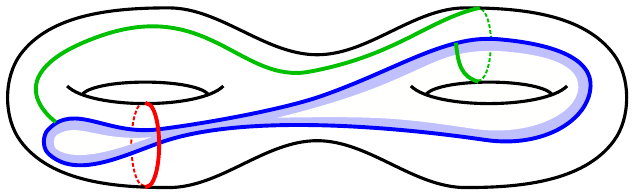
\hfill
\def\svgwidth{0.48\textwidth}
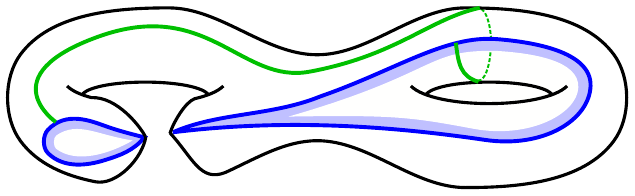
\caption{On the left hand side we show a cycle $\gamma$ and a $\gamma$-switching ear $\omega$.
  When we cut along $\alpha$, $\gamma$~gets broken into two pieces (right hand side of the figure), but even in this disconnected subgraph the notion of a ``positive side'' and a ``negative side'' is meaningful, naturally inherited from $\gamma$.}
\label{fig:fig0}
\end{figure}

Continuing in this example of Figure~\ref{fig:fig0}, we note that we cannot meaningfully
say that $\omega$ is a $\gamma'$-switching ear, as $\gamma'$ is not a cycle. 
However, at a closer inspection we note that the property that $\gamma$
``has a left-side and right-side'' is inherited to $\gamma'$.  (With an eye
on things to come, let us refer to these instead as a ``positive side'' and
a ``negative side'', yielding the idea that every cycle has {\em polarity}). 
In the figure we illustrate with a shade one of the sides of $\gamma$ (the
other side is unshaded), and we see that these naturally yield meaningful
``sides'' of $\gamma'$.  Thus $\gamma'$ inherits from $\gamma$ its polarity,
that is, a ``positive side'' and a ``negative side'' for each component of $\gamma'$.

It goes without saying that the scenarios one could encounter during the
cutting process could be considerably more complicated.
However, the crucial point is that, as we will see later, 
this polarity property makes good and consistent sense throughout the
whole cutting process. Informally, at each step we can keep
track of the original sides of~$\gamma$ even when $\gamma$ itself is
shattered into pieces.

In a nutshell, the proof of Lemma~\ref{lem:kl-to-stretch} then consists of
following the subgraph induced by $\gamma$ along the cutting process, and
showing that at some point we can successfully stop the process since the
dual cycles witnessing the dual stretch are long enough in terms of the
lengths of original $\gamma$ and $\omega$, as required in the statement
of the lemma.
This informal explanation now allows us to smoothly proceed to a formal
definition of the polarity concept, and to a statement of the workhorse
behind the proof (namely Lemma~\ref{lem:cutstep}).
Lemma~\ref{lem:kl-to-stretch} then follows as a rather easy consequence.

\subsection{Polarity}\label{subsec:polarity}

Let $G$ be a connected graph embedded in a surface $\Sigma$. If $D$ is a
(not necessarily connected) subgraph of $G$, then we may regard $D$ as an
embedded graph on its own right, by removing all the edges and vertices of
$G$ that are not in $D$ and inherting the corresponding restriction of the
rotation scheme of~$G$.
Athough, note that each connected component of $D$ would be embedded on its ``own''
surface, which is not necessarily~$\Sigma$. 
For our purpose, it is enough that this view consistently identifies the
facial walks of~$D$ (with respect to~$G$).

A {\em sign assignment} on $D$ is a mapping $\Gamma$ that assigns to each
facial walk of $D$ a sign $+$ or $-$ (thus making each facial walk {\em
positive} or {\em negative}).  A sign assignment is {\em bipolar} if for
each edge $e$ of $D$, one of the facial walks incident with $e$ is positive,
and the other is negative.  If $D$ has a bipolar sign assignment, then we
say that $D$ is {\em bipolar}.  The simplest example of a bipolar subgraph
is a two-sided cycle; under the current framework, a cycle has two facial walks (its
``sides''), and by making one of this facial walks positive, and the other
one negative, we obtain a bipolar sign assignment.  It is easy to see that
if $D$ is bipolar then $D$ is Eulerian.

We now consider a fixed bipolar sign assignment of a subgraph $D$ of an
embedded graph $G$, and let $e=uv$ be an edge in $G$ that is not in $D$, but
is incident with a vertex $u$ in $D$.  We use the common artifice of
interpreting an edge $e$ as being the union of two {\em half-edges}, one half-edge
$h_u$ incident with $u$, and one half-edge $h_v$ incident with $v$; each
half-edge is incident then with exactly one vertex, and its other end is a
loose end that attaches to no vertex.  
The half-edge $h_u$ is then incident with exactly one facial walk of $D$
(and the same possibly holds for $h_v$ if $v$ is also in~$D$).  
In this situation we speak about {\em$D$-polarity} of~$h_u$:
if the facial walk which $h_u$ is incident with is positive (respectively,
negative), then we say that $h_u$ itself is {\em $D$-positive}
(respectively, $D$-{\em negative}).

\begin{remark}
{\sl Formally, one should not say that a half-edge (or an edge) is
$D$-positive or $D$-negative, as this depends not only on $D$ but on the sign
assignment $\Gamma$ under consideration; we should then say something like
``$\Gamma$-negative'' instead.  This complication will turn out to be
unnecessary, as for each subgraph we handle we will consider only one
fixed sign assignment.}
\end{remark}

For the rest of this subsection, $D$ is a subgraph of an embedded graph $G$, and (in line with the previous Remark) we work under a fixed sign assignment for $D$.

As hinted in the informal discussion in Section~\ref{subsec:intuition}, we
need to extend the notion of switching, which we defined for cycles, to the
arbitrary bipolar subgraph $D$ of $G$ under consideration.  The definition,
as one would expect, is that a $D$-ear $P$ is {\em $D$-switching} if one
end-half-edge of $P$ is $D$-positive, and the other end-half-edge is
$D$-negative.

We also need to extend the concept of leaping, from cycles to the arbitrary bipolar subgraph $D$ of $G$. Roughly speaking the idea is (as with cycles) that as we traverse a walk we suddenly ``enter'' $D$, stay on $D$ for a while, and then leave $D$. If the half-edge in the walk just before entering $D$ and the half-edge in the walk just after leaving $D$ are of distinct polarities, then the subwalk that we traversed inside $D$ is a {leap}.

To define this formally, let $W=v_0e_0v_1\ldots e_{n-1}v_n$ be a walk in $G$. We remark that in a walk, repetitions of vertices and edges are allowed. If $v_0=v_n$ then we read indices modulo $n$, so that, for instance, we consider $v_i e_i v_{i+1}\ldots e_{n-1} v_0 e_0 v_1\ldots e_{j-1} v_j$ a valid subwalk of $W$, for any $i,j\in\{0,1,\ldots,n-1\}$ with $i>j$.

Now let $M=v_k e_k v_{k+1} \ldots e_{\ell-1} v_\ell$ be a maximal subwalk of $W$ contained in $D$. That is, (i) $M$ is a subwalk of $W$; (ii) regarded as a subgraph of $G$, $M$ is a subgraph of $D$; and (iii) neither $e_{k-1}$ nor $e_\ell$ are in $D$. Then $M$ is a {\em leap} (of $W$ and $D$) if the half-edge of $e_{k-1}$ incident with $v_k$, and the half-edge of $e_\ell$ incident with $v_\ell$, have distinct polarities.

We say that the walk $W$ is {\em odd-leaping} $D$ if the number of subwalks
of $W$ which are leaps is odd; otherwise $W$ is {\em even-leaping} $D$.  
The following observation is worth highlighting.
Assume that $G$ contains no $D$-switching ear, and $W$ is a closed walk in~$G$.
Then, traversing $W$, we must encounter leaps of $D$ in an alternating
manner -- leaps from positive to negative $D$-polarity followed by leaps
from negative to positive $D$-polarity, and vice versa.
Hence the total number of leaps along closed~$W$ must be even in such a case.
We can thus conclude:
\begin{remark}
\label{rem:switchingleap}
{\sl 
If there is a closed walk that odd-leaps $D$, then there
exists a $D$-switching ear.}
\end{remark}

\subsection{The workhorse}

With the notion of polarity formally laid out, we can now proceed with the proof of Lemma~\ref{lem:kl-to-stretch}. As we briefly outlined in Section~\ref{subsec:intuition}, the idea is to start with the dual cycle $\gamma$, and iteratively keep cutting along a dual cycle (the short one) witnessing the dual stretch of the current graph, until we reach a graph $H'$ with the conditions required in the lemma.

The workhorse behind the proof is Lemma~\ref{lem:cutstep} below, which keeps
track of (the remains of) a bipolar dual subgraph $\delta$ as we go
through the cutting process.  Let us now state this
auxiliary lemma and then, before proceeding to the proof of Lemma~\ref{lem:kl-to-stretch}, 
have an informal discussion on how we make use of it.

\begin{lemma}
\label{lem:cutstep}
Let $H$ be a graph embedded in an orientable surface $\Sigma$ of genus $g$. Suppose that:
\begin{itemize}\parskip0pt
\item[(a)] $g\ge 1$;
\item[(b)] $\delta$ is a bipolar dual subgraph of $H^*$;
\item[(c)] there exists a closed walk in $H^*$ odd-leaping $\delta$; and
\item[(d)] the minimum length of a $\delta$-switching ear in $H^*$ equals~$\oldh$.
\end{itemize}
Let $\alpha,\beta$ be a one-leaping pair of dual cycles in $H^*$ such that
$\len(\alpha)\leq\len(\beta)$ and
$\stretchd(H)=\len(\alpha)\cdot\len(\beta)$.  
If $\len(\beta)<\oldh$, then all the following hold:
\begin{itemize}\parskip0pt
\item[(a')] $g\ge2$ (and hence the genus of $H\cutt\alpha$ is $\geq1$);
\item[(b')] there is a bipolar dual subgraph $\delta_2$ of $(H\cutt\alpha)^*$;
\item[(c')] there exists a closed walk in $(H\cutt\alpha)^*$ odd-leaping $\delta_2$; and
\item[(d')] the minimum length of a $\delta_2$-switching ear in $(H\cutt\alpha)^*$
	is $\oldh_2\geq \oldh-\frac12\len(\alpha)$.
\end{itemize}
\end{lemma}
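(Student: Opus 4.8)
The statement is a "one cut preserves the structure" lemma: we cut $H$ along the dual cycle $\alpha$ and must verify that the three hypotheses (a)--(c) are inherited, with only a controlled loss $\frac12\len(\alpha)$ in the switching distance. The plan is to process the three conclusions in the order a'), b'), c'), since b') will use the walk from b) essentially unchanged, while c') is the quantitative heart and relies on both Lemma~\ref{lem:3pp} (the $3$-path condition for odd-leaping) and Lemma~\ref{lem:odd-stretch} (odd-stretch equals stretch), in the same spirit as the proof of Lemma~\ref{lem:cutdew}.

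\textbf{Step 1: constructing $\delta_1$ and checking bipolarity (a').} Let $a,b$ be the two new faces of $H^*$ created by cutting along $\alpha$ (so in the primal, cutting $H$ along the dual cycle $\alpha$ splits nothing in $V(H)$ but severs the edges $E(\alpha)^*$ and merges the two sides of $\alpha$ into discs $a,b$). Since $\alpha$ is one-leaping with $\beta$, it is in particular nonseparating, so the cut genuinely lowers the genus by one. Define $\delta_1$ to be the natural image of $\delta$ in $(H\cutt\alpha)^* = H^*\cutt\alpha$ — formally, $\delta$ and $\alpha$ are dual cycles; cutting along $\alpha$ deletes the edges of $\alpha$ but keeps all of $V(H^*)$ except that faces touched by $\alpha$ get reorganized, and $\delta_1$ is the subgraph of $H^*\cutt\alpha$ on the edge set $E(\delta)\sem E(\alpha)$ together with the obvious inherited rotation. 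One must check $\delta_1$ is still bipolar in $(H\cutt\alpha)^*$: the $H^*$-induced embedding $\tilde\delta$ gets modified only along components of $\delta\cap\alpha$, where an even number of $\delta$-faces merge in pairs (because $\alpha$, being a cycle, meets $\delta$ transversely in a controlled pattern); a facial bicoloring of $\tilde\delta$ therefore descends to a facial bicoloring of the induced embedding of $\delta_1$, possibly after merging same-colored faces. This is a local, somewhat tedious verification analogous to the halfedge bookkeeping in Lemma~\ref{lem:3pp}, but it is routine.

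\textbf{Step 2: the odd-leaping walk survives (b').} Take the closed walk $W$ in $H^*$ odd-leaping $\delta$ promised by (b). If $W$ is disjoint from $\alpha$, its image in $(H\cutt\alpha)^*$ is again a closed walk and its polarity leaps with $\delta_1$ are exactly those with $\delta$, so it is still odd-leaping; done. If $W$ meets $\alpha$, one reroutes $W$ around the cut: wherever $W$ runs into $E(\alpha)$ it is redirected along the boundary of the disc $a$ (or $b$). Because $\alpha$ is a cycle, this rerouting changes the parity of the number of polarity leaps of $W$ with $\delta$ by an even amount — this is precisely the kind of parity statement Lemma~\ref{lem:3pp} was designed to supply, and one invokes it (or the same halfedge argument) at each rerouting segment. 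Hence the rerouted closed walk still odd-leaps $\delta_1$. An alternative, cleaner route here is to note that "odd-leaping bipolar $D$" is equivalent to a $\ZZ_2$-homology condition relative to $D$, which is manifestly preserved under cutting along a cycle disjoint in homology class from the relevant generator; whichever presentation is cleaner in the paper's language should be used.

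\textbf{Step 3: the switching distance, $h_1 \ge h - \tfrac12\len(\alpha)$ (c'), the main obstacle.} This is the step I expect to be hardest, and it mirrors Lemma~\ref{lem:cutdew} almost line for line. Let $P_1$ be a shortest $\delta_1$-polarity switching ear in $(H\cutt\alpha)^*$, so $\len(P_1) = h_1$. Lift $P_1$ to $\widehat{P_1}$ in $H^*$. If $\widehat{P_1}$ is again a $\delta$-polarity switching ear, then $h \le \len(\widehat{P_1}) = h_1 \le h_1 + \tfrac12\len(\alpha)$ and we are done (with room to spare). Otherwise $\widehat{P_1}$ passes through (one or both of) the cut vertices, i.e. it contains a sub-ear that is an $\alpha$-ear, and $\alpha$ together with this sub-ear forms a theta graph; split $\alpha = A_1 \cup A_2$ at the two ends of that sub-ear $Q$. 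Now I apply Lemma~\ref{lem:3pp}: among the three cycles of $\alpha \cup Q$, exactly two are odd-leaping with $\beta$ (one of them being $\alpha$ itself, since $\alpha,\beta$ are one-leaping hence odd-leaping). By Lemma~\ref{lem:odd-stretch}, whichever of $A_1\cup Q$, $A_2\cup Q$ is odd-leaping $\beta$ has length at least $\len(\alpha)$ (else $\stretchd(H) \le \len(\beta)\cdot\len(A_i\cup Q) < \len(\beta)\len(\alpha)$, contradiction) — \emph{provided} $\len(\beta) < h$ is \emph{not} what rescues us, i.e. this is exactly where hypothesis "unless (d)" is used: if $\len(\beta)\ge h$ the whole lemma is vacuous, so we may assume $\len(\beta) < h$, which together with $\len(A_i \cup Q) < \len(\alpha) \le \len(\beta) < h$ would actually produce a short $\delta$-polarity switching ear and contradict (c) — one has to chase through that $A_i\cup Q$, after adjusting its polarity parity via $Q$, yields a $\delta$-polarity switching walk shorter than $h$. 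Say WLOG $A_1 \cup Q$ is the one forced to be long, so $\len(Q) \ge \len(A_1) \ge \len(A_2) \cdot(\text{up to the labeling})$... more carefully: $\len(Q) \ge \len(A_1)$ and hence, since $\len(A_1) \ge \len(\alpha) - \len(A_2) \ge \len(\alpha) - \tfrac12\len(\alpha)$ is not quite what we want — rather the correct bound falls out as in Lemma~\ref{lem:cutdew}: $\len(\widehat{P_1}) \ge \len(Q) \ge \len(\alpha) - \len(A_2)$ and $A_2 \cup (\text{rest of }\widehat{P_1})$ is a $\delta$-polarity switching ear in $H^*$, giving $h \le \len(A_2) + \len(\widehat{P_1}\sem Q) \le \tfrac12\len(\alpha) + h_1$. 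Rearranging gives $h_1 \ge h - \tfrac12\len(\alpha)$, as claimed. The delicate points to get right are (i) that polarity (not just homotopy) is what transfers across the theta graph, so one needs a polarity version of the $3$-path bookkeeping — but Lemma~\ref{lem:3pp} and its halfedge/$M^+$ argument are stated in enough generality, or can be trivially re-run for $\delta$ in place of a cycle; and (ii) handling the sub-case where $\widehat{P_1}$ hits \emph{both} cut vertices, which is treated exactly as in the "$D$ hits both $c_1,c_2$" branch of Lemma~\ref{lem:dew2}/\ref{lem:str4}, extracting an $\alpha$-polarity-switching ear and applying Lemma~\ref{lem:kl2}'s analogue.
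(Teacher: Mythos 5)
Your overall architecture (treat a'), b'), c') separately; lift a shortest switching ear and, if it ends on the cut, extend it along an arc of $\alpha$ of length at most $\frac12\len(\alpha)$) is the same as the paper's, but there are genuine gaps, and the central one is that you never invoke the escape clause ``unless (d)'' where it is actually indispensable. The paper's proof starts all three parts by observing that one may assume neither $\alpha$ nor $\beta$ contains a $\delta$-polarity switching ear: otherwise such an ear has length at least $h$ by (c), so $h\le\len(\alpha)\le\len(\beta)$ (or $h\le\len(\beta)$), i.e.\ (d) holds. Without this assumption your Step 1 is simply false: bipolarity does \emph{not} descend ``routinely''. For instance, if $\alpha$ meets $\delta$ in a single transversal crossing (so that the $\delta$-ear contained in $\alpha$ is polarity-switching), then after the cut the crossing vertex is split between the two cut vertices, the component of $\delta$ through it becomes a path, and the resulting $\delta_1$ is not even Eulerian, hence not bipolar. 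Your claim that only same-coloured faces of $\tilde\delta$ merge has no argument behind it and fails exactly in this situation; the paper instead contracts the edges of $\alpha$ one by one and uses Fact~\ref{factA}, whose hypothesis (the contracted edge is not polarity-switching) is precisely what the no-switching-ear assumption supplies.

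Step 2 has a second substantive gap. After cutting, a dual walk cannot pass from the $a_1$-side to the $a_2$-side of the former $\alpha$ at all, and rerouting ``along the boundary of the disc'' keeps the walk on one side, so your rerouted object need not even reconnect to the next edge of $W$ when that edge attaches on the other side; moreover the assertion that the parity of polarity leaps changes by an even amount ``because $\alpha$ is a cycle'' is not something Lemma~\ref{lem:3pp} gives you (it concerns leaps of cycles with a cycle, not polarity leaps of walks with a bipolar subgraph). The paper's mechanism is different: each pass through the contracted vertex is replaced by a full traversal of $\beta$, and parity is preserved because $\beta$ is \emph{even}-leaping $\delta$ --- which again is available only because otherwise $\beta$ would contain a $\delta$-polarity switching ear and (d) would hold (Fact~\ref{factB} does the bookkeeping under contraction). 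Your Step 3 reaches an inequality of the right shape but by a muddled route: the paper needs neither Lemma~\ref{lem:3pp} nor Lemma~\ref{lem:odd-stretch} there; it only needs that when the lifted ear ends on $\alpha$, appending a subpath $\pi\subseteq\alpha$ with $\len(\pi)\le\frac12\len(\alpha)$ yields a $\delta$-ear that is still polarity-switching, and the polarity transfer along $\pi$ is once more exactly the no-switching-ear-in-$\alpha$ assumption that your write-up never secures. So the missing idea is not technical polish but the systematic use of (d) to rule out switching ears inside $\alpha$ and $\beta$; without it, conclusions (a') and (b') can actually fail.
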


In this statement, whose proof is deferred to Section~\ref{sec:findingB},
condition (d) is well-defined since (c) implies the existence of a
$\delta$-switching ear.
Besides, it
might be odd-looking that the objects in (b'), (c'), and (d') are labelled
$\delta_2$ and $\oldh_2$ (instead of, say, $\delta_1,\oldh_1$ or
$\delta',\oldh'$).  This is intentional, as in the proof we wish to
reserve the notation $\delta_1$ for an intermediate object we use to arrive
from $\delta$ to $\delta_2$.

The idea to prove Lemma~\ref{lem:kl-to-stretch} is to apply
Lemma~\ref{lem:cutstep} iteratively.  We start by letting $\delta$ be the
dual cycle $\gamma$ with an arbitrary bipolar sign assignment, and keep iteratively
applying Lemma~\ref{lem:cutstep} to the resulting bipolar graph $\delta_2$
in place of~$\delta$, each step replacing $H$ with $H\cutt\alpha$. 
One should note that this iterative process is not our objective by itself, 
but only a means to eventually violate the assumption $\len(\beta)<\oldh$.  
The reason for which it is a desirable outcome will
become clear in the upcoming short proof of Lemma~\ref{lem:kl-to-stretch};
informally, it yields a situation in which we can jump into the conclusion
that $\len(\alpha)\cdot\len(\beta)$ is not much smaller than $k\ell$.
On the other hand, it is important to make it clear why $\len(\beta)<\oldh$
must be violated, at some point.
This is simply because the genus $g$ of our graph $H$ is finite at the beginning,
and at every iteration we decrease it by~$1$,
hence eventually making the only other option ``(a')~$g\ge2$~\dots'' fail.

We are now ready to present the proof of Lemma~\ref{lem:kl-to-stretch}.

\begin{proof}[\bf Proof of Lemma~\ref{lem:kl-to-stretch}]

We proceed by iteratively using Lemma~\ref{lem:cutstep}.
Notice that all the conditions (a),(b),(c),(d) of Lemma~\ref{lem:cutstep} are
satisfied by the graph $H$, its bipolar dual cycle $\delta:=\gamma$,
and by $\oldh:=\ell$.
Let $H_0=H$ and $\gamma_0=\gamma,\, \ell_0=\ell,\, g_0=g$.

For $i=1,2,\dots$, we apply Lemma~\ref{lem:cutstep} to $H:=H_{i-1}$ and
$\delta:=\gamma_{i-1},\, \oldh:=\ell_{i-1}$,
assuming that $\len(\beta)< \oldh$ still holds. % fails.
So, we can set $H_{i}:=H\cutt\alpha$
and $\gamma_{i}:=\delta_2,\, \ell_i:=\oldh_2$,
and the conditions (a),(b),(c), (d) of Lemma~\ref{lem:cutstep} are again 
satisfied by those (hence leaving room for the next iteration).
Since the genus of $H_{i-1}$ is $g_0-i+1$, the condition (a')~$g\ge2$
of Lemma~\ref{lem:cutstep} surely fails at iteration $i=g_0$,
and hence this iterative process must stop after less than $g_0$ iterations
-- this is only possible with achieving $\len(\beta)\geq \oldh$.

In a summary,
after the last successful iteration number $i<g_0$, we have got:
\begin{itemize}
\item
 the graph $H_i$ (a subgraph of~$H$) which is of genus $g_0-i\geq1$,
\item
 its nonseparating dual edge-width is
 $\ewnd(H_i)\geq2^{-i}\cdot\ewnd(H_0)>1$ which
 follows by iterating Lemma~\ref{lem:cutdew} $i$ times,
\item
 the shortest $\gamma_i$-switching ear in $H_i^*$ has
 length at least $\ell_i\geq 2^{-i}\cdot\ell$, since one can iterate (c')
  $\oldh_2\geq \oldh-\frac12\len(\alpha)\geq \oldh-\frac12\len(\beta)\geq \frac12\oldh$ at
 each of the previous $i$ steps, and
\item
 there exists a one-leaping pair of dual cycles $\alpha,\beta$ in $H_i^*$ such that
 $\len(\alpha)\leq\len(\beta)$, $\stretchd(H_i)=\len(\alpha)\cdot\len(\beta)$,
 and $\len(\beta)\geq \oldh=\ell_i$ hold.
\end{itemize}

Consequently,
$$
\stretchd(H_i)=\len(\alpha)\cdot\len(\beta)\geq
	\ewnd(H_i)\cdot \ell_i\geq 2^{-i}\ewnd(H)\cdot2^{-i}\ell
	= 2^{-2i}\cdot k\ell
\,.$$
By setting $H'=H_i$ for $g'=g_0-i$, Lemma~\ref{lem:kl-to-stretch} follows.
\end{proof}

\subsection{A few facts on polarity}\label{subsec:oddleap}

We close this section by stating a few simple facts around the notion of polarity. 
These facts will be used in the proof of Lemma~\ref{lem:cutstep}.

\begin{observation}\label{o:col}
Let $G$ be a graph embedded on a surface $\Sigma$, and let $D$ be a bipolar subgraph of $G$, with a fixed bipolar sign assignment. Then the following hold:
\begin{enumerate}
\item\label{a} If $e$ is a non-loop edge of $D$, then $D/e$ is a bipolar subgraph of $G/e$.
\item\label{b} If a non-loop edge $e$ does not belong to $D$, and is not a $D$-switching ear, then $D/e$ is a bipolar subgraph of $G/e$.
\item\label{e} If there is a closed walk that odd-leaps $D$, then there exists a $D$-switching ear.
\item\label{f} Suppose that $W,W'$ are closed walks in $G$, such that
  $W$ odd-leaps $D$ and $W'$ even-leaps $D$.
  Suppose further that $W$ and $W'$ have a common vertex $v$. Then the concatenation of $W$ and $W'$ (that is, the walk obtained by starting at $v$, traversing $W$, and then traversing $W'$) is a closed walk that odd-leaps $D$.
\item\label{o:oddleap} If $\Sigma$ is the sphere, then no closed walk in $G$ odd-leaps $D$.
\end{enumerate}
\end{observation}

All these facts follow from the definition of polarity. Facts (\ref{a}) and (\ref{b}) are totally straightforward. Fact (\ref{e}) was actually already noted at the end of Section~\ref{subsec:polarity}. Fact (\ref{f}) follows by an easy case analysis; it also follows easily using routine surface homology arguments.

The less straigthforward of these is perhaps Fact (\ref{o:oddleap}), but even this is hardly more than a simple exercise from the definition of polarity. First, note that if the faces of $G$ can be ``colored'' positive and negative, so that each $D$-positive (respectively, $D$-negative) facial walk is incident with a positive (respectively, negative) face, then (\ref{o:oddleap}) follows from a simple parity argument. Let us call this a {\em good} sign assignment on $D$; thus if the sign assignment on $D$ is good we are done. If the sign assignment on $D$ is not good we proceed as follows. Change the sign assignment (positive to negative, and vice-versa) of the facial walks of one connected component of $D$. It is easy to see that a closed walk odd-leaps $D$ with the original sign assignment if and only if it odd-leaps $D$ with the new sign assignment. We can then apply this sign-change to the connected components of $D$, as many times as needed, until we obtain a good sign assignment on $D$, and so (\ref{o:oddleap}) follows.

\section{Proof of Lemma~\ref{lem:cutstep}}
\label{sec:findingB}

In a nutshell, to prove the lemma we will show that the bipolar dual
subgraph $\delta_2$ naturally induced by the edges of $\delta$ that
survive in $(\Halpha)^*$, satisfies the required conditions 
(in particular, $\delta_2$ is bipolar, yielding (b')). 
As we will see, we may assume that $\delta_2$ is not trivial (that is, not an empty dual
subgraph), as otherwise the condition $\len(\beta)<\oldh$ in the statement 
of the lemma is violated.  
We note that then $\delta_2$ is clearly well-defined; every edge in $\Halpha$
corresponds naturally to an edge in $H$, and so every edge in $(\Halpha)^*$
corresponds naturally to an edge in $H^*$.

In order to obtain the closed walk required in (c'), we will make use of the
closed walk guaranteed from (c).  However, an obvious problem reveals itself
immediately: a closed walk (in particular, the one odd-leaping $\delta$) in
$H^*$ need not be a closed walk in $(\Halpha)^*$, since the dual cycle
$\alpha$ gets destroyed in the process of obtaining $(\Halpha)^*$.  This is
perhaps the most notorious difficulty that must be overcome, together with
the corresponding difficulty of trying to associate $\delta_2$-switching
ears in $(\Halpha)^*$ with $\delta$-switching ears back in $H^*$, in order
to prove (d).

These difficulties will be overcome by a detailed understanding of how the
structures in $H^*$ are affected by the removal of $\alpha$.  This
understanding will be often be aided via a specific ``intermediate'' embedded graph
$H_1$ (and its dual $H_1^*$).  Since we will be relating dual subgraphs and
walks from $H^*$ to their corresponding structures in $H_1^*$, and then to
their corresponding structures in $(H\cutt\alpha)^*$, it will greatly help
understanding the arguments if we denote $(H\cutt\alpha)^*$ simply by $H_2^*$.  In
this way we can follow the practice of labelling objects associated to
$H^*$ (and the primal graph $H$) without subscripts; then we can label
objects associated to $H_1^*$ (and its primal $H_1$) with the subscript
$1$ and objects associated to $H_2^*$ (and the primal $H_2$)
with the subscript $2$.

\begin{remark}
{\sl For the rest of the proof of Lemma~\ref{lem:cutstep}, 
we use $H_2$ to denote the graph $\Halpha$ that results from cutting $H$ along $\alpha$. 
With this convention, we have $H_2^*=(\Halpha)^*$.}
\end{remark}

\subsection{Setup}
\label{sub:setup}

The embedded graph $H_2=H\cutt\alpha$ is obtained by removing from $H$ all
the edges whose dual edges form $\alpha$, and then cutting the host surface
along the resulting cylinder.  We obtain the ``intermediate'' graph $H_1$ by
stopping this process short, removing all the edges of $E(\alpha)^*$
except for a single last edge.

Formally, let $F^*=\{f_1^*, f_2^*,\ldots,f_m^*\}\subseteq E(H^*)$ denote the set of
dual edges of the cycle $\alpha$, and let $F=\{f_1,f_2,\ldots,f_m\}$ be
the corresponding set of edges in the primal graph $H$.  We assume without
loss of generality that these edges are labelled so that they occur in this
cyclic order in $\alpha$.  Now the intermediate embedded graph $H_1$ is
obtained by pausing right before removing the last edge $f_1$ from $F$.  (As
we will see shortly, $f_1$ will be carefully chosen, but this is irrelevant
at this point).  That is, $H_1$ is simply $H\sem\{f_2,f_3,\ldots,f_m\}$
(embedded on~$\Sigma$).

We also make an observation regarding the dual $H_1^*$ of $H_1$. Since
the removal of an edge in a graph corresponds to the contraction of the
corresponding dual edge in the dual graph, it follows that in order to
obtain $H_1^*$ we contract from $H^*$ the edges
$f_2^*,f_3^*,\ldots,f_m^*$.

In the proof of (a'),(b'),(c') and (d') (of Lemma~\ref{lem:cutstep}) we
frequently analyze objects in $H_2^*$ via their corresponding objects in
$H_1^*$.  We refer the reader to Figure~\ref{fig:fig2} for an illustration
of this process of getting $H_1^*$ from $H^*$, and then $H_2^*$ from
$H_1^*$.  On the left-hand side of this figure, we depict the relevant parts
of $H$ and $H^*$; $H$ is drawn with thin edges, and $H^*$ with thick
(colored) edges. 
Inside the cylinder $C$ of $H$ (bounded by two cycles) we have the dual
cycle $\alpha$, and each edge $f_i$ in $H$ drawn across $C$ corresponds to a
dual edge $f_i^*$ in $\alpha$.  The middle part of this figure illustrates
$H_1$ and $H_1^*$: all the edges $f_m,f_{m-1},\ldots,f_2$ have been removed
(respectively, all the edges $f_m^*, f_{m-1}^*, \ldots, f_2^*$ have been
contracted).  The only edge drawn across $C$ that remains is $f_1$, and so
its corresponding dual edge $f_1^*$ is a loop-edge, based on a dual vertex
$j_1^*$.  Finally, on the right-hand side we depict $H_2$ and $H_2^*$.  The
graph $H_2$ is obtained by removing $f_1$, cutting inside the (now
edge-free) interior of $C$ and pasting disks to the resulting holes.  In the
dual $H_2^*$, each of this disks (which yield faces in $H_2$) yields a dual
vertex, thus obtaining a ``left'' dual vertex $\ell_2^*$ and a ``right''
dual vertex $r_2^*$.

\begin{figure}%[ht!]
        \def\svgwidth{0.3\textwidth}
        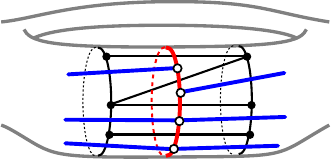\hfill
        \def\svgwidth{0.3\textwidth}
        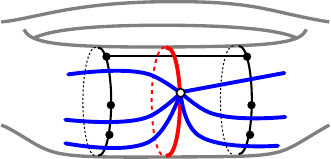\hfill
        \def\svgwidth{0.3\textwidth}
        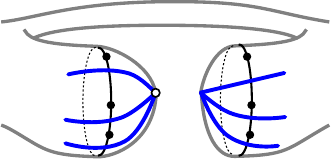
\caption{An illustration of the process of obtaining $H_1^*$ (center) from $H^*$ (left), and then $H_2^*$ (right) from $H_1^*$. The primal edges (that is, the edges of $H, H_1$, and $H_2$ are drawn thin, and the dual edges (those of $H^*, H_1^*$, and $H_2^*$) are thick.}
\label{fig:fig2}
\end{figure}

Now we pay close attention to the dual $H_2^*$.
We note that it may be correctly argued that $H_2^*$ is simply described as
the dual of $H_2$, and we have just described how to obtain $H_2$; however, for
our purposes it will be very helpful to visualize how $H_2^*$ is obtained
from $H_1^*$ without referring to~$H_2$.

We let the rotation around $j_1^*$
be $f_1^*, e_1^*, e_2^*, \ldots, e_s^*, f_1^*, g_1^*, g_2^*, \ldots, g_t^*$,
so that $e_1^*,e_2^*,\ldots,e_s^*$ lie to the left (relative to the loop
orientation) of $f_1^*$, and $g_1^*, g_2^*,\ldots, g_t^*$ lie to the right 
of $f_1^*$.
To get $H_2^*$ from $H_1^*$, the first step is to contract the edge
$f_1^*$, obtaining temporarily a pinched surface whose pinched point is the
vertex $j_1^*$.  The second step is to split naturally $j_1^*$ into two
vertices $\ell_2^*$ and $r_2^*$, where $\ell_2^*$ is incident with the left
edges $e_1^*,\ldots,e_s^*$, and $r_2^*$ is incident with the right edges
$g_1^*,\ldots,g_t^*$.  As a result of this two-step process we obtain
exactly the dual graph $H_2^*$.

Note that this is a valid process regardless of which edge $f_1$ we chose to
be the final edge to be removed from $\{f_1,f_2,\ldots,f_m\}$ (equivalently,
which dual edge $f_1^*$ we chose to be the final edge to be contracted from
$\{f_1^*,f_2^*,\ldots,f_m^*\}$).  However, for our purposes we choose $f_1^*$
(thus implicitly choosing $f_1$) so that it satisfies a particular
condition: {\em $f_1^*$ is not in $\beta$}.  The reason to choose $f_1^*$
with this property will become clear later in the proof.  For now
we just state that such an edge must exist, simply because
the fact that $\alpha,\beta$ is a one-leaping pair implies
that not every edge of $\alpha$ is also in $\beta$.

\paragraph{The key objects: $\delta_1$ and $\delta_2$. }

We let $\delta_1$ denote the dual subgraph in $H_1^*$ induced by $\delta$
(that is, induced by the edges of $\delta$ that are not in $\{f_m^*,
f_{m-1}^*, \ldots, f_2^*\}$).  We note that $\delta_1$ cannot be trivial,
that is, an empty subgraph of $H_1^*$, for this would imply that $\delta$
is contained in a path in $\alpha$ (namely the path formed by the edges
$f_2^*,f_3^*,\ldots,f_m^*$).  But this is impossible since $\delta$ is
bipolar and, as it follows immediately from the definition of polarity, no
path is bipolar.

The final goal in Lemma~\ref{lem:cutstep} will be to establish
properties of $\delta_2$, which is simply the subgraph of $H_2^*$ induced by
the edges in $\delta$ that are still edges in $H_2^*$
(i.e., those which survived the cutting process along~$\alpha$).  
These are also the edges of $\delta_1$ with the (possible) exception of $f_1^*$.

We claim that we may assume also $\delta_2$ to be nontrivial, that is, not an
empty subgraph of $H_2^*$.  To see this, we note that the only alternative
is that $\delta\subseteq \alpha$.  If this were the case then necessarily we
would have $\delta=\alpha$, since no proper subgraph of $\alpha$ is bipolar
(a path or a disjoint union of paths is never bipolar).  Now suppose that
$\delta=\alpha$.  Since $\alpha,\beta$ is a one-leaping pair, it follows
that $\beta$ contains an $\alpha$-switching ear (and thus a
$\delta$-switching ear, since $\delta=\alpha$).  By (d), this
$\delta$-switching ear has length at least $\oldh$, and thus $\len(\beta) \ge \oldh$,
violating the assumption $\len(\beta)<\oldh$.  Therefore, for the rest we may assume that
$\delta_2$ is not trivial.

In the previous paragraph we have used the argument that if $\beta$ contains a
$\delta$-switching ear, then (d) holds.  Similarly, if $\alpha$ contains a
$\delta$-switching ear, then it follows that
$\len(\alpha)\ge \oldh$, and since $\len(\beta)\ge \len(\alpha)$ this implies that
$\len(\beta)\ge \oldh$, again violating the assumption.  Thus we may assume that
neither $\alpha$ nor $\beta$ contain a $\delta$-switching ear.  This is
worth highlighting for future reference:
\begin{remark}\label{rem:noswitchingalpha}
{\sl We continue the proof  of Lemma~\ref{lem:cutstep} under the assumptions that: 
(i) $\alpha$ does not contain a $\delta$-switching ear; and 
(ii) $\beta$ does not contain a $\delta$-switching ear.} 
\end{remark}

\paragraph{Proof of (a').}
The claim $g\ge2$, which is equivalent to saying that $H_2^*$ is not embedded in the sphere, 
will automatically follow from later (c') under Observation~\ref{o:col}\,(\ref{o:oddleap}).

\subsection{Proof of (b')}

Here we show that $\delta_1$ in $H_1^*$ naturally inherits a bipolar sign assignment
from the bipolar sign assignment of $\delta$.  After this, we show that
$\delta_2$ in $H_2^*$ naturally inherits a bipolar sign assignment from the one of
$\delta_1$ (and thus, from the one of $\delta$).

Let $(H^{m-1}){{}^*},\ldots, (H^1){{}^*}$ be the dual graphs obtained from
$H^*$ by iterative contraction of the edges $f_m^*, f_{m-1}^*,\ldots,
f_2^*$.  Formally, we let $(H^m)^*:=H^*$, and let
$(H^j)^*:=(H^{j+1})^*/f_{j+1}^*$ for $j=m-1,m-2,\ldots,1$.  To keep track to
what happens to $\delta$ and $\alpha$ throughout this contraction process,
we analogously let $\delta^m:=\delta$ and $\alpha^m:=\alpha$, and
$\delta^{j}:=\delta^{j+1}/f_{j+1}^*$ and
$\alpha^{j}:=\alpha^{j+1}/f_{j+1}^*$ for $j=m-1,m-2,\ldots,1$.  Thus
$\delta_1=\delta^1$.

After each contraction step, we claim that $\delta^j$ has a natural bipolar sign
assignment inherited from $\delta^{j+1}$ (and thus all the way to the top,
from $\delta$).  
We argue from Observations~\ref{o:col}\,(\ref{a}) and (\ref{b}):
\begin{itemize}
\item If $f_{j+1}^*\in E(\delta^{j+1})$ then we simply apply (\ref{a}).
\item If $f_{j+1}^*\not\in E(\delta^{j+1})$ then we actually have
$f_{j+1}^*\in E(\alpha^{j+1})\setminus E(\delta^{j+1})$,
and we would like to apply (\ref{b}) here.
For this it is enough to show that $f_{j+1}^*$ is not a
$\delta^{j+1}$-switching ear.
From Remark~\ref{rem:noswitchingalpha} we know that $\alpha=\alpha^m$
contains no $\delta^m$-switching ear, and this is easily inherited down the
process, so that $\alpha^i$ contains no $\delta^i$-switching ear for
$i=m-1,\ldots,j+1$.
In particular, no edge in $E(\alpha^{j+1})\setminus E(\delta^{j+1})$ is a
$\delta^{j+1}$-switching ear.
\end{itemize}

Thus at the end of the process we obtain that the dual subgraph
$\delta_1=\delta^1$ of $H_1^*=(H^1)^*$ inherits all the way down from $\delta$ a
bipolar sign assignment.  Moreover, we have an additional piece of
information on the sign assignment of $\delta_1$, which turns out to be
crucial in the next step: if $f_{1}^*\not\in E(\delta_1)$, then $f_{1}^*$ is not
a $\delta_1$-switching ear.

In the next step, we show that $\delta_2$ gets a sign assignment naturally 
inherited from the one of~$\delta_1$.
We first consider the case of $f_{1}^*\in E(\delta_1)$.
Note that $f_1^*$ cannot appear twice in one facial walk of $\delta_1$ since
it is bipolar.
In this case the facial walks of $\delta_1$ and $\delta_2$ are the
same, with the following exception:
$f_1^*$ is removed from the two walks containing it (and such a walk
disappears from $\delta_2$ completely if it was formed only by $f_1^*$).
Hence, $\delta_2$ indeed inherits the bipolar sign assignment on~$\delta_1$.

We are left with the case of $f_{1}^*\not\in E(\delta_1)$.
Recall, from Section~\ref{sub:setup}, that the rotation of edges around
the end vertex of the loop-edge $f_1^*$ in $H_1^*$ is
$f_1^*, e_1^*, e_2^*, \ldots, e_s^*, f_1^*, g_1^*, g_2^*, \ldots, g_t^*$.
Let $a$ (respectively,~$b$) be the smallest (respectively, largest) index 
such that $e_a^*$ (respectively, $e_b^*$) is in $\delta_2$.  
Similarly, let $c$ (respectively,~$d$) be the smallest (respectively, largest) 
index such that $g_c^*$ (respectively,~$g_d^*$) belongs to $\delta_2$.
If there are no such edges of $\delta_2$, then we simply leave $a,b$ or $c,d$ undefined.

In this case we see that the facial walks of $\delta_1$ and $\delta_2$ are
again the same, unless all indices $a,b,c,d$ are defined.
In the latter case, there is a facial walk $U_1$ of
$\delta_1$ that includes $e_{a}^*$ followed by~$g_d^*$, and a facial walk
$W_1$ of $\delta_1$ that includes $e_b^*$ followed by $g_c^*$.  
Now (and this is essential), since $f_1^*$ is not a $\delta_1$-switching ear, it
follows that $U_1$ and $W_1$ have the same sign.
In $\delta_2$, on the other hand; instead of $U_1$ we have a facial walk $U_2$ that 
includes $e_a^*$ followed by $e_b^*$, and instead of $W_1$ we have a
facial walk $W_2$ that includes $g_c^*$ followed by $g_d^*$.  
Then we simply give to $U_2$ and to $W_2$ the common sign of $U_1$ and
$W_1$, which makes no change to local sign situation of any edge of $\delta_2$.
So, we are again done with $\delta_2$ inheriting the bipolar sign
assignment of~$\delta_1$ (which was itself naturally inherited from the one
on~$\delta$).

\subsection{Proof of (c')}

To prove (c') we take a closed walk $\omega$ that odd-leaps $\delta$, and
use $\omega$ to produce, via a closed walk that odd-leaps $\delta_1$, a
closed walk that odd-leaps $\delta_2$.  The main difficulty here is that the
subgraph in $H_2^*$ induced by $\omega$ may not be a closed walk, after the
cutting-through-$\alpha$ process.  To resolve this complication, we will need
to ``re-join'' the components of the subgraph induced by $\omega$ in
$H_2^*$, in such a way that the final result maintains the odd-leapiness
property.

The existence of a closed walk $\omega$ in $H^*$ that odd-leaps $\delta$
is hypothesis (c) in the statement of the lemma.  
We let $\omega_1$ denote the subgraph in $H_1^*$ induced by the edges of
$\omega$.  Since $\omega_1$ is obtained by contracting edges of $\omega$,
then $\omega_1$ indeed is a closed walk in $H_1^*$. 
We mention that $\omega_1$ might contain the edge $f_1^*$,
which we will resolve later.

Similarly as in the proof of (b'), 
to keep track of what happens to $\omega$ throughout the contraction
process, we let $\omega^m:=\omega$, and $\omega^{j}:=\omega^{j+1}/f_{j+1}^*$
for $j=m-1,m-2,\ldots,1$.  Note that $\omega_1=\omega^1$.
Using the fact (established in the proof of (b')) that $f_{j}^*$ is not a
$\delta^{j}$-switching ear for $j=m,m-1,\ldots,2$, it is easily seen that
the property that $\omega=\omega^m$ is a closed walk that odd-leaps
$\delta=\delta^m$ is inherited to $\omega^j,\delta^j$ for all $j=m-1,\ldots,1$.
Thus $\omega_1=\omega^1$ is a closed walk that odd-leaps $\delta_1=\delta^1$.

Now $\omega_1$ may contain the loop $f_1^*$, but since $f_1^*$ is not a
$\delta_1$-switching ear (again, we showed this in the proof of (b')), in
this case we may remove $f_1^*$ from $\omega_1$ (as many times as it occurs),
and it is immediately verified that the result is a closed walk $\psi_1$
that odd-leaps $\delta_1$ and does not contain $f_1^*$.

Having achieved the first intermediate goal of finding suitable $\psi_1$ that odd-leaps $\delta_1$, 
we moreover find a closed walk $\phi_1$ in $H_1^*$ that even-leaps~$\delta_1$; 
this $\phi_1$ will be useful in the ``re-joining'' process outlined above.  
Recall from Remark~\ref{rem:noswitchingalpha} that $\beta$ contains no
$\delta$-switching ear, and so $\beta$ even-leaps $\delta$
(cf.~Remark~\ref{rem:switchingleap}).
Similarly as above, we let $\phi^m:=\beta$, and
$\phi^{j}:=\phi^{j+1}/f_{j+1}^*$ for $j=m-1,m-2,\ldots,1$.
An identical argument to the one we used above to show that $\omega^j$ odd-leaps
$\delta^j$, now yields that each $\phi^{j}$ is a closed walk that even-leaps
$\delta^j$, for all $j=m-1,\ldots,1$.
We let $\phi_1:=\phi^{1}$.

We actually have (and will need) more information on the walk $\phi_1$. 
Since $\alpha$ and $\beta$ are in a one-leap position, it follows that 
(i) $\phi_1$ as a heir of $\beta$ passes through
the vertex $j_1^*$ incident with the loop-edge $f_1^*$ exactly once; 
and (ii) one of the edges in $\phi_1$ incident with $j_1^*$ is in
$\{e_1^*,e_2^*,\ldots,e_s^*\}$, and the other is in
$\{g_1^*,g_2^*,\ldots,g_t^*\}$.  
At an informal level, $\phi_1$ ``crosses''
(as opposed to ``tangentially intersects'') the loop edge $f_1^*$ at
$j_1^*$, and it does so only once.

We finally use $\psi_1$ and $\phi_1$ together to achieve the last goal,
constructing a closed walk $\tau_2$ in $H_2^*$ that odd-leaps $\delta_2$.
The overall idea is very simple. 
Denote by $\phi_2$ the open walk in $H_2^*$ induced by the edges of
$\phi_1$, with the ends $\ell_2^*$ and $r_2^*$ (recall that $j_1^*$ has been
split into $\ell_2^*$ and $r_2^*$ while constructing $H_2^*$).
Now, to construct the desired $\tau_2$ in $H_2^*$, 
we follow the edges of $\psi_1$ in order, and each time we encounter an
edge from $\{e_1^*,\ldots,e_s^*\}$ immediately followed by one from
$\{g_1^*,\ldots,g_t^*\}$ (or vice versa), we suspend at $\ell_2^*$ and
concatenate one whole turn of $\phi_2$ from $\ell_2^*$ to $r_2^*$
(respectively, one reversed turn of $\phi_2$ from $r_2^*$ to $\ell_2^*$).
Then we resume from $r_2^*$ (respectively, $\ell_2^*$) with the 
next edge of $\psi_1$, until finishing all of its edges.
This clearly results in a closed walk $\tau_2$ in $H_2^*$.

The last step is to prove that, indeed, $\tau_2$ odd-leaps $\delta_2$.
Let $\tau_1$ be the closed walk in $H_1^*$ induced by the edges of $\tau_2$
(i.e., the lift of $\tau_2$).
By the construction of $H_2^*$ and $\delta_2$ from $H_1^*$ and~$\delta_1$
(see in the proof of (b')), 
it is clear that $\tau_2$ odd-leaps $\delta_2$ iff $\tau_1$ odd-leaps~$\delta_1$.
The latter holds true simply by applying (possibly iteratively)
Observation~\ref{o:col}(\ref{f}) with $W:=\psi_1$, $W':=\phi_1$,
$D:=\delta$ and $v:=j_1^*$.

\subsection{Proof of (d')}

By (c'), we know that there is a closed walk in $H_2^*$ that
odd-leaps~$\delta_2$, and so there exists a $\delta_2$-switching ear
there by Observation~\ref{o:col}(\ref{e}).
Let $\sigma_2$ denote a shortest $\delta_2$-switching ear in $H_2^*$.
To prove (d'), it suffices to show that then there
exists a $\delta$-switching ear $\varrho$ in~$H^*$ of length at most
$\len(\sigma_2)+\frac12\sizealpha$.

Let $u^*$ and $v^*$ be the ends of~$\sigma_2$, and let
$h_u^*$ and $h_v^*$ be their corresponding end-half-edges.
The strategy here is to understand the lift of $\sigma_2$ in $H^*$,
that is the subgraph $\sigma$ of $H^*$ induced by the edges of $\sigma_2$.  
First, note that $\sigma$ in $H^*$ is not necessarily connected;
in fact, it may consist of up to three components if both
of the dual vertices $\ell_2^*,r_2^*$ belong to~$\sigma_2$
(cf.~Figure~\ref{fig:fig2}).
On the other hand, it is easy to see that the ``breaking points'' of
$\sigma$ are all incident to $\alpha$.
More precisely, $\sigma\cup\alpha$ is connected.
Second, we pay attention to $\delta_2$ and $\delta$;
clearly, $\delta_2$ contains one or both of $\ell_2^*,r_2^*$
if and only if $\delta$ intersects $\alpha$ in~$H^*$.

We now start a case analysis of the relation of $\sigma$ and $\delta$
to the vertices $\ell_2^*,r_2^*$.
In the easiest instance, $\sigma_2$ includes neither $\ell_2^*$ nor~$r_2^*$.
Then $\sigma=\sigma_2$ (regardless of~$\delta$), 
and by the way the bipolar sign assignment on 
$H_2^*$ is inherited from the bipolar sign assignment on $H^*$, it follows 
that the $\delta_2$-polarity of each of the half-edges $h_u^*$ and $h_v^*$
equals its $\delta$-polarity.
Therefore, $\sigma$ itself is a $\delta$-switching ear of the same length,
and we are done by setting $\varrho:=\sigma$.

The situation is similarly easy when $\sigma_2$ contains one or both of
$\ell_2^*,r_2^*$, but $\delta_2$ includes neither $\ell_2^*$ nor~$r_2^*$.
Then $\delta$ is disjoint from $\alpha$ in $H^*$ and
so the ends $u^*,v^*$ are not on~$\alpha$.
Consequently, unless $\sigma$ itself is a $\delta$-switching ear as previously,
we can reconnect the two components of $\sigma$ incident to $\delta$
(a possible third component of $\sigma$ can be safely ignored here)
with a shorter subpath of $\alpha$.
This results in a $\delta$-switching ear $\varrho$ 
of length at most $\len(\sigma_2)+\frac12\sizealpha$.

In the rest of the proof we may thus assume that each of $\sigma_2$ and
$\delta_2$ includes at least one of $\ell_2^*,r_2^*$ (not necessarily the
same one).
This in particular means that $\delta$ intersects $\alpha$ in~$H^*$.
It will be useful in the upcoming arguments to notice that the meaning
of $\delta$-polarity can be consistently {\em extended}
to any half-edge incident to $V(\alpha)\sem V(\delta)$.
Indeed, any dual vertex $w^*\in V(\alpha)\sem V(\delta)$ belongs to some
$\delta$-ear $\pi\subseteq\alpha$, and the ends of $\pi$ are of the same
$\delta$-polarity by Remark~\ref{rem:noswitchingalpha}.
We assign this $\delta$-polarity value (of the ends of~$\pi$)
to any half-edge incident with~$w^*$.

Our more detailed strategy for finishing the proof of (d') is to find a
suitable $(\delta\cup\alpha)$-ear $\vartheta\subseteq\sigma$ in~$H^*$, 
and apply the following claim to it (which readily implies (d')):

\begin{claim}\label{clm:extpolar}
Let $\vartheta$ be a $(\delta\cup\alpha)$-ear such that the end-half-edges 
of $\vartheta$ are of distinct extended $\delta$-polarity.
Then there exists a $\delta$-switching ear $\varrho\supseteq\vartheta$ of
length at most $\len(\vartheta)+\frac12\sizealpha$.
\end{claim}
\begin{proof}\useInnerQed
Let $x^*,y^*$ be the ends of $\vartheta$.
At least one of the ends, say $x^*$, belongs to $V(\alpha)\sem V(\delta)$,
or else we are trivially done.
Let $\pi_x\subseteq\alpha$ denote the $\delta$-ear on $\alpha$ that $x^*$
belongs to.
We choose a shortest subpath $\pi_x'\subseteq\pi_x$ from $x^*$ to $\delta$.
If $y^*\in V(\alpha)\sem V(\delta)$, too, then we analogously find
$\pi_y$ and $\pi_y'$ and we remark that $\pi_y\not=\pi_x$ since the ends of
$\vartheta$ are of distinct extended $\delta$-polarity.
Otherwise, we set $\pi_y=\pi_y':=\emptyset$.
Clearly, $\varrho:=\pi_x'\cup\vartheta\cup\pi_y'$ is a $\delta$-switching ear.
Furthermore, $\len(\pi_x')+\len(\pi_y')\leq
 \frac12\len(\pi_x)+\frac12\len(\pi_y)\leq\frac12\len(\alpha)$.
\end{proof}

Based on the assumption that each of $\sigma_2$ and  
$\delta_2$ includes at least one of $\ell_2^*,r_2^*$,
the following complete case analysis will be considered in finishing the proof:

(i) $V(\sigma_2)\cap V(\delta_2)\supseteq \{\ell_2^*,r_2^*\}$,

(ii) $V(\sigma_2)\cap \{\ell_2^*,r_2^*\}=\{\ell_2^*\}$,
up to symmetry between $\ell_2^*$ and $r_2^*$, and $\ell_2^*\in V(\delta_2)$,

(iii) $V(\delta_2)\cap \{\ell_2^*,r_2^*\}=\{\ell_2^*\}$,
up to symmetry between $\ell_2^*$ and $r_2^*$, and $r_2^*\in V(\sigma_2)$.

\smallskip
In case (i), since $\sigma_2$ is a $\delta_2$-ear, we get that 
the ends $u^*$ and $v^*$ of $\sigma_2$ are $\ell_2^*$ and $r_2^*$
(in either order), and so $\sigma$ is a $(\delta\cup\alpha)$-ear.
By the way the bipolar sign assignment on $H_2^*$ is inherited from the one
on $H^*$, we see that the mutually distinct $\delta_2$-polarity values of $h_u^*$ and $h_v^*$
are the same as their extended $\delta$-polarities.
Hence Claim~\ref{clm:extpolar} applies to $\vartheta:=\sigma$.

Case (ii) is similar to previous case (i); we again get that $\ell_2^*$ is one of the
ends of $\sigma_2$, and so $\sigma$ is a $(\delta\cup\alpha)$-ear.
Hence, analogously, Claim~\ref{clm:extpolar} applies to $\vartheta:=\sigma$.

Case (iii) is the most interesting one.
Here $r_2^*\not\in V(\delta_2)$ is not an end of $\sigma_2$, and so $\sigma_2$ 
consists of two subpaths $\sigma_2'$ and $\sigma_2''$ sharing $r_2^*$.
Their lifts $\sigma'$ and $\sigma''$ in~$H^*$ are each a $(\delta\cup\alpha)$-ear.
Note that $\ell_2^*$ may be one of the ends of $\sigma_2$,
if $V(\sigma_2)\supseteq \{\ell_2^*,r_2^*\}$, but this does not harm our arguments.
Since $r_2^*\not\in V(\delta_2)$, the ends of $\sigma'$ and $\sigma''$ on
$\alpha$ are of the same extended $\delta$-polarity (though they need not
belong to the same $\delta$-ear of $\alpha$).
Now, since the $\delta$-polarities (this time not extended) of the other ends 
$h_u^*,h_v^*$ of $\sigma'$ and $\sigma''$ are mutually distinct,
one of $\sigma'$, $\sigma''$ has ends of distinct extended
$\delta$-polarity. We thus again finish by Claim~\ref{clm:extpolar}.

\section{Removing the density requirement}
\label{sec:nodensity}

Our algorithmic technique proving Theorem~\ref{thm:main-dense} in Section~\ref{sec:drawing-upper} 
starts with a graph on a nonplanar surface, and brings the
graph to the plane without introducing too many crossings.  
As mentioned before, focusing only on surface surgery in this planarization
process would inevitably require a certain lower bound on the density 
of the original embedding.  However, we can
naturally combine this algorithm with some previous algorithmic results on inserting a
\emph{small} number of edges into a planar graph, to obtain a polynomial
algorithm with essentially the same approximation ratio but without the
density requirement.
This combined approach can be sketched as follows:
\begin{enumerate}
\item
As long as the embedding density requirement of Theorem~\ref{thm:main-dense} is
violated, we cut the surface along the violating loops.
Let $K\subseteq E(G)$ be the set of edges affected by this;
we know that $|K|$ is small, bounded by a function of $g$ and $\Delta(G)$.
Let $G_K:=G-K$.
\item
By Theorem~\ref{thm:upper-cr}, applied to $G_K$, 
we obtain a suitable set $F\subseteq E(G_K)$ such that $G_{KF}:=G_K-F$ is plane.
\item
In \cite{CH17}, we have developed fine-tuned efficient methods of dealing with
insertion of multiple edges into a planar graph, such that the resulting
crossing number is not too high.
Here, we would like to apply those methods to insert the edges of $K$
(a small set) back to $G_{KF}$, 
and simultaneously apply Theorem~\ref{thm:upper-cr} to insert $F$
(a rather large set compared to $K$) again to $G_{KF}$.
The number of possibly arising mutual crossings $|F|\cdot|K|$ is negligible,
but the real trouble is that the methods of \cite{CH17} are generally 
allowed to change the planar embedding of $G_{KF}$ with the insertion of~$K$, 
and hence the insertion routes assumed by Theorem~\ref{thm:upper-cr} 
may no longer exist after the application.
Fortunately, the number of the insertion routes for $F$
is bounded in the genus (unlike $|F|$), and so the methods of \cite{CH17} 
can be adapted to respect these computed routes for $F$ without
a big impact on its approximation ratio.
\end{enumerate}

Unfortunately, turning this simple sketch into a formal proof would not be short,
due to the necessity to bring up many technical definitions and fine algorithmic 
details from \cite{CH17}.
That is why we consider another option, allowing a short self-contained proof
at the expense of giving a weaker approximation guarantee.
We use the following simplified formulation of the main result of
\cite{CH17}.
For a graph $H$ and a set of edges $K$ with ends in $V(H)$, but $K\cap
E(H)=\emptyset$, let $H+K$ denote the graph obtained by adding
the edges $K$ into $H$.

\begin{theorem}[Chimani and Hlin\v{e}n\'y \cite{CH17}]
\label{thm:apxmei}
Let $H$ be a connected planar graph with maximum degree~$\Delta$, 
$K$ an edge set with ends in $V(H)$ but $K\cap E(G)=\emptyset$.
There is a polynomial-time algorithm that finds a drawing of $H+K$ in the plane
with at most $d\cdot\crg(H+K)$ crossings, where $d$ is a constant depending 
only on $\Delta$ and $|K|$ (more precisely, linear in $\Delta$ and quadratic
in~$|K|$).
In this drawing, subgraph $H$ is drawn planarly, i.e., all crossings involve at least one edge of $K$.
\end{theorem}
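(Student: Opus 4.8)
The plan is to prove Theorem~\ref{thm:apxmei} using the theory of SPQR-trees, which parametrize all planar embeddings of a biconnected planar graph, together with the classical exact algorithm for inserting a \emph{single} edge into a planar graph with a variable embedding.

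First I would reduce to the biconnected case via the block-cut tree of $H$: an edge $uv\in K$ with $u,v$ in different blocks must follow the block-cut path joining them, and inside each block on that path it only has to reach the relevant cut vertices, so the crossings it creates can be optimized block by block. It then suffices to bound, for each biconnected block $B$ of $H$, the crossings charged to $B$ against the corresponding part of $\crg(H+K)$; this is routine bookkeeping. For a single edge $e=uv$ and biconnected planar $B$, I would invoke the Gutwenger--Mutzel--Weiskircher single-edge insertion algorithm, which walks the SPQR-tree $\mathcal T$ of $B$ from the allocation node of $u$ to that of $v$, choosing inside each skeleton an embedding and a shortest route in its radial dual, and outputs a planar embedding $\Pi$ of $B$ together with an insertion path of length exactly $\crg(B+e)$.

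The core of the argument is coordinating the $k=|K|$ insertions, which is nontrivial precisely because different edges of $K$ may demand incompatible embedding choices at the nodes of $\mathcal T$ (mirror flips at R-nodes, permutations of virtual edges at P-nodes, orderings at S-nodes). The plan is to commit to one planar embedding $\Pi^\ast$ of $B$, selected node by node in $\mathcal T$ so that at each node the chosen local embedding is, among the boundedly many options, close to optimal on average for those $K$-edges whose routes pass through that node; then every $e\in K$ is routed by a shortest dual walk in the fixed $\Pi^\ast$. One shows (i) that the per-edge routing cost in $\Pi^\ast$ exceeds the per-edge optimum by at most a factor depending only on $\Delta$ (a single edge of $B$ can lie on an optimal route only a number of times bounded in terms of the degrees of its endpoints, and a P-node contributes a factor proportional to the number of its virtual edges, itself bounded by $\Delta$); (ii) that the crossings among the inserted edges themselves number at most $\binom{k}{2}$ times the largest per-edge cost; and (iii) that each per-edge optimum is at most $\crg(B+e)\le\crg(B+K)$. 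Assembling these via a charging scheme in which every crossing of an optimal drawing of $B+K$ pays for at most $d=d(\Delta,k)$ crossings of the produced drawing yields the claimed bound. Since only edges of $K$ are ever drawn off $\Pi^\ast$, every crossing in the output involves an edge of $K$, so $H$ is drawn planarly; and since the block-cut tree and all SPQR-trees are linear, single-edge insertion is polynomial, and we perform $O(k)$ such insertions plus a node-by-node selection inspecting $O(\Delta)$-many options at linearly many nodes, the whole procedure runs in polynomial time.

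The main obstacle is precisely steps (i)--(iii) of the coordination: quantifying how much is lost by forcing all $k$ edges through a common embedding instead of their individual optima, and simultaneously controlling the mutual $K$--$K$ crossings, while keeping the charging bounded. This is exactly where dependence of $d$ on both $\Delta$ and $k$ is forced, and where a careful case analysis of how insertion routes interact inside S-, P-, and R-node skeletons is unavoidable.
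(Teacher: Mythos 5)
This statement is not proved in the paper at all: it is an external result, quoted with a citation to \cite{MR2874100}, and the authors use it as a black box in Section~\ref{sec:nodensity}. So there is no internal proof to compare against; the relevant benchmark is the cited work itself, whose strategy (block decomposition, SPQR-trees, single-edge insertion \`a la Gutwenger--Mutzel--Weiskircher, then coordinating the $k$ insertions into one common embedding) your outline does broadly mirror.

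The problem is that your proposal stops exactly where the theorem begins. Steps (i)--(iii) are asserted, not proved, and you say yourself that they are ``the main obstacle''; but they \emph{are} the theorem --- the reduction to blocks and the single-edge subroutine are standard, while the entire content of \cite{MR2874100} is the charging argument showing that one can fix a single planar embedding of $H$ and route all $k$ edges in it while losing only a factor $d(\Delta,k)$ against $\crg(H+K)$. In particular, claim (i) as stated is not substantiated: choosing at each SPQR-node a local embedding that is ``close to optimal on average'' for the routes passing through it does not obviously bound the per-edge loss by a function of $\Delta$ alone; the interaction between different $K$-edges' preferred mirrorings at R-nodes and permutations at P-nodes is precisely where the known proof needs a delicate, global analysis (and where the dependence on $k$ enters), and a naive greedy/averaging choice can be far from any fixed-embedding optimum without such an analysis. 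Claim (ii) is also shaky as formulated: the natural bound on crossings among inserted edges is combinatorial (each pair can be made to cross at most once after local uncrossings), and to absorb even that into a multiplicative bound $d\cdot\crg(H+K)$ you must argue $\crg(H+K)\geq 1$ whenever any crossings occur at all, and more generally relate the number of pairwise-interacting $K$-edges to $\crg(H+K)$. Finally, the block-cut-tree reduction needs more care than ``routine bookkeeping'': an optimal drawing of $H+K$ need not respect the block structure planarly, so charging block-local insertion costs to $\crg(H+K)$ again requires an argument of the same flavour as the missing core. As it stands, the proposal is a plausible roadmap that reproduces the architecture of the cited proof but leaves its essential quantitative heart unproven.
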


To complete the proof of the main result, we will now prove the theorem below, which
is a reformulation of Theorem~\ref{thm:main-overview}(b).

\begin{theorem}%[Theorem~\ref{thm:main-overview}(b)]
\label{thm:main-nodense}
Let $g>0$ and $\Delta$ be integer constants.
Assume $G$ is a graph of maximum degree $\Delta$ embeddable in $\Sigma_g$.
There is a polynomial time algorithm that outputs a drawing of
$G$ in the plane with at most $c_2\cdot\crg(G)$ crossings,
where $c_2$ is a constant depending on $g$ and~$\Delta$.
\end{theorem}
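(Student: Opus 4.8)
The plan is to strip off a bounded number of edges so as to recover the density hypothesis of Theorem~\ref{thm:main-overview}, run the surface-based drawing algorithm of Theorem~\ref{thm:upper-cr} on what remains, and then put the stripped edges back using the edge-insertion approximation of Theorem~\ref{thm:apxmei}. We may assume $G$ is connected and nonplanar; the planar case is trivial, and a disconnected $G$ is handled componentwise with the bounds added up. First I would perform surgery: as long as the current embedded graph $G_i\subseteq\Sigma_{g_i}$ violates the density requirement ---say $\ewnd(G_i)<5\cdot2^{g_i-1}\lfloor\Delta/2\rfloor$, equivalently some noncontractible nonseparating loop meets $G_i$ in fewer than that many edges--- I cut the surface along such a short violating loop and move its severed edges ---a number bounded by a function of $g$ and $\Delta$--- into a set $K$. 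Since each cut lowers the genus, at most $g$ cuts occur, so $|K|$ is bounded by a constant depending only on $g$ and $\Delta$. Write $G_K:=G-K$; it is embedded in some $\Sigma_{g'}$ with $g'\le g$, and either $g'=0$ and $G_K$ is plane, or $\ewnd(G_K)\ge5\cdot2^{g'-1}\lfloor\Delta/2\rfloor$. Moreover $\crg(G_K)\le\crg(G)$, as $G_K$ is a subgraph of $G$.

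Next I would apply the algorithm of Theorem~\ref{thm:upper-cr} to $G_K$. It computes a good planarizing sequence of $G_K^*$, hence an edge set $F\subseteq E(G_K)$ (the union of the corresponding dual cycles) with $G_{KF}:=G_K-F$ plane, and it outputs a drawing of $G_K$ obtained by reinserting $F$ into a fixed plane embedding $\Pi$ of $G_{KF}$ with at most $3(2^{g'+1}-2-g')\cdot\max_i\{k_i\ell_i\}$ crossings. By Corollary~\ref{cor:kl-to-stretch} this quantity is at most $c_K\cdot\crg(G_K)\le c_K\cdot\crg(G)$ for a constant $c_K=c_K(g,\Delta)$. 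So far this is just Theorem~\ref{thm:main-overview}(b) applied to $G_K$, giving a drawing $D_{G_K}$ of $G_K$ with $\ca O(\crg(G))$ crossings.

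The remaining task, and the main obstacle, is to reinsert the edges of $K$ without spoiling the bound ---note that $F$ itself is far too large to be handled by Theorem~\ref{thm:apxmei}, while a naive reinsertion of $K$ into the fixed drawing $D_{G_K}$ could cost $\Omega(|V(G)|)$. Since $|K|$ is bounded and $G_{KF}$ is plane (add dummy edges to connect it if needed, deleting them at the end), Theorem~\ref{thm:apxmei} applied to $H:=G_{KF}$ and the edge set $K$ produces in polynomial time a drawing of $G_{KF}+K=G-F$ with at most $d\cdot\crg(G-F)\le d\cdot\crg(G)$ crossings, $d=d(\Delta,|K|)$, in which $G_{KF}$ is drawn planarly. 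The difficulty is that this drawing realizes some plane embedding $\Pi'$ of $G_{KF}$, possibly different from $\Pi$, so the $F$-insertion routes of the previous step need not survive. There are two ways to finish. The route giving the better constant is to open up \cite{MR2874100} and force its edge-insertion procedure to keep open the ``corridors'' required for the later reinsertion of $F$: as remarked after Theorem~\ref{thm:upper-cr}, these corridors are indexed by the at most $2^{g}$ target face-pairs $(a_g(f),b_g(f))$, a number bounded in $g$ rather than $|F|$, so respecting them costs the algorithm of \cite{MR2874100} only a factor depending on $g$ in its approximation ratio; afterwards $F$ is reinserted through the preserved corridors at the cost bounded in the previous step, and the two crossing counts add. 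The self-contained route, with a weaker constant, instead keeps the drawing $D_{G_K}$, planarizes it into a plane graph $P$ with $\Delta(P)\le\max(\Delta,4)$ and at most $c_K\crg(G)$ dummy vertices, applies Theorem~\ref{thm:apxmei} to $H:=P$ and $K$, and then undoes the planarization; this yields a drawing of $G$ with at most $c_K\crg(G)+d'\cdot\crg(P+K)$ crossings, and it remains to verify $\crg(P+K)=\ca O(\crg(G))$ ---intuitively, routing the $|K|$ new edges around the boundedly many dummy vertices of $P$ costs only $\ca O_{|K|}(\crg(G))$ more than an optimal drawing of $G$. In either case we obtain a polynomial-time algorithm outputting a drawing of $G$ with at most $c_3\cdot\crg(G)$ crossings for a constant $c_3=c_3(g,\Delta)$, as required.
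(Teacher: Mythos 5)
Your overall architecture is the same as the paper's: cut a constant-size edge set $K$ to restore the density assumption, planarize the remainder $G_K$ via Theorem~\ref{thm:upper-cr}, and bring $K$ back with Theorem~\ref{thm:apxmei}. The gap is that the one step you leave open --- reconciling the reinsertion of $F$ with the re-embedding performed by the algorithm of Theorem~\ref{thm:apxmei} --- is exactly the crux, and neither of your two routes settles it. Route A is the paper's own preliminary sketch, which the paper explicitly declines to formalize because it would require reworking the internals of \cite{MR2874100}; asserting that that algorithm ``can be adapted to respect the corridors without a big impact on its approximation ratio'' is a claim, not an argument. Route B rests entirely on the assertion $\crg(P+K)=\ca O(\crg(G))$, which you do not prove, and the intuition you offer for it is flawed: the dummy vertices of $P$ are not ``boundedly many'' (there can be up to $c_K\cdot\crg(G)$ of them), and the real difficulty is not routing around them but that the planarization $P$ hard-codes the particular drawing $D_{G_K}$. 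An optimal drawing of $G$ does not yield a cheap drawing of $P+K$, since forcing every crossing pair of $D_{G_K}$ to actually meet (in the prescribed order along each edge) may require many additional crossings; and inserting $K$ into the fixed planar embedding of $P$ inherited from $D_{G_K}$ is likewise not bounded in terms of $\crg(G)$. For an arbitrary near-optimal drawing in place of $D_{G_K}$ such a bound simply fails in general, because the dummy vertices can pin down precisely the embedding freedom on which a cheap drawing of $G$ relies, and you use no special property of the drawing produced by Theorem~\ref{thm:upper-cr} that would exclude this.

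The paper's proof supplies exactly the mechanism you are missing, and its arrangement is dictated by this issue. Theorem~\ref{thm:apxmei} is applied not to the full planarization $P=G_K'$ but only to the planarly embedded subdivision $G_{KF}'$ of $G_{KF}$, so that the benchmark is $\crg(G_{KF}'+K)=\crg(G_{KF}+K)\le\crg(G)$ for free, $G_{KF}+K$ being (a subdivision of) a subgraph of $G$. The planarized $F$-edges, i.e.\ the at most $|F_2|\le 2c_2\crg(G_K)$ edges of $E(G_K')\sem E(G_{KF}')$, are then restored one at a time using Claim~\ref{cl:edgeback}: each restoration may change the planar embedding, but it increases the insertion cost of each edge of $K$ by at most $2\lfloor\Delta/2\rfloor$, so the total overhead is $|K|\cdot2\lfloor\Delta/2\rfloor\cdot|F_2|+|K|^2/2=\ca O(\crg(G))$ with constants depending only on $g$ and $\Delta$; only afterwards are the dummy vertices turned back into crossings. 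This induction (or an equivalent explicit control of the embedding changes) is what your Route B needs in place of the unproven claim $\crg(P+K)=\ca O(\crg(G))$; as written, the proposal omits the decisive step.
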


\begin{proof}
Let $c_2'$, depending on $g$ and $\Delta$, be as in Theorem~\ref{thm:main-dense}.
Note that, since our estimates are only asymptotic, we may always assume $\Delta\geq4$.
Let $r_0=5\cdot2^{g-1}\lfloor\Delta/2\rfloor$.
Since $r_0$ is nondecreasing in $g$, we may just fix it for the rest of the proof
(in which we are going to possibly decrease the genus).
We may assume that $G$ is connected, since otherwise we split the problem
into subproblems on the connected components.
If $\ewnd(G)<r_0$, let $\gamma$ be the witnessing dual cycle of~$G$.
We cut $G$ along $\gamma$, and repeat this operation until we arrive at an
embedded graph $G_K\subseteq G$ of genus $g_K<g$ such that
$\ewnd(G_K)\geq r_0$.
Let $K=E(G)\sem E(G_K)$ be the severed edges, where $|K|\leq gr_0$
is bounded.

If $g_K=0$, then we simply finish by applying Theorem~\ref{thm:apxmei}.
Otherwise, we apply Theorem~\ref{thm:main-dense} to $G_K$ in $\Sigma_{g_K}$
(the surface of genus~$g_K$), 
which yields a drawing $G^\circ_K$ of $G_K$ in the plane with $k\leq
c_2'\cdot\crg(G_K)$ crossings.
Let $F\subseteq E(G_K)$ be the subset of edges that are crossed in $G^\circ_K$.
In the drawing $G^\circ_K$ we now replace each crossing by a new subdividing vertex. 
This gives a planarly embedded graph $G_K'$ that contains a planarly embedded 
subdivision $G_{KF}'$ of $G_K-F$.
Let $F_2=E(G_K')\sem E(G_{KF}')$.
By simple counting of edges incident to the new vertices replacing
crossings in $G^\circ_K$, we get $|F_2|\leq 4k\leq 4c_2'\cdot\crg(G_K)$.
(In fact, using further arguments explicitly considering the planarizing
edges computed by the algorithm of Theorem~\ref{thm:upper-cr} as $F$, this
inequality can be improved to $|F_2|\leq2k$.)

Now we apply Theorem~\ref{thm:apxmei} to $H=G_{KF}'$ and $K$ (from the
beginning of the proof).
This gives us a factor~$d$ and a drawing $G_F'$ of $G_{KF}'+K$ with at most 
$d\cdot\crg(G_{KF}'+K)$ crossings in the plane.
The final task is to put back the edges of $F_2$ into $G_F'$;
note, however, that the planar subembedding of $G_{KF}'$ within $G_F'$ 
is generally different from the original embedding of $G_{KF}'$ within~$G_K'$.

For the latter task we use the following technical claim:
\begin{claim}[Hlin\v{e}n\'y and Salazar~{\cite[Lemma~2.4]{HS06}}, see also
	{\cite[Lemma~5]{CHM12}}]\label{cl:edgeback}
Suppose that $H$ is a connected graph embedded in the plane,
and $f\not\in E(H)$ is an edge joining vertices of $H$ such that 
$H+f$ is a planar graph (not necessarily using the same embedding~$H$).
Then there exists a planar embedding $H_0$ of $H+f$ such that the following
holds for any edge $e\not\in E(H)$ joining vertices of~$H$:
If $e$ can be drawn in $H$ with $\ell$ crossings, then $e$ can be drawn in $H_0$ 
with at most $\ell+2\cdot\lfloor\Delta(H)/2\rfloor$ crossings.
\end{claim}
\noindent
We remark that the original formulation of \cite[Lemma~2.4]{HS06} did not
state that the planar drawing $H_0$ is the same one for all possibly added
edges $e$, but this extension follows already from the proof in \cite{HS06}.
Moreover, Claim~\ref{cl:edgeback} is a special case of
\cite[Lemma~5]{CHM12} (for~$d=2$ there).

To proceed with our proof,
assume that every edge $e\in K$ can be drawn in $G_{KF}'$ with
$\ell_e$ crossings, and let $F_2=\{f_1,f_2,\dots,f_a\}$.
By induction on $i=0,1,\dots,a=|F_2|$,
we show that there is a suitable planar embedding $H_i$ of 
$G_{KF}'+f_1+\dots+f_{i}$, such that every $e\in K$ can be drawn in $H_i$
with at most $\ell_e+2i\cdot\lfloor\Delta(H_{i-1})/2\rfloor$ crossings.
This is trivial for $i=0$, and the induction step is immediate from an
application of Claim~\ref{cl:edgeback} to $H=H_{i-1}$, $f=f_i$, and
$\ell=\ell_e+2(i-1)\cdot\lfloor\Delta(H_{i-2})/2\rfloor$.
Note that the assumptions of Claim~\ref{cl:edgeback} are
satisfied at each step since $G_{KF}'+F_2$ is planar.

We remark that the resulting planar embedding $H_a$ of $G_K'$ may be different
from the embedding of $G_K'$ we started with above.

We now count all crossings on the edges of $K$.
We have $\Delta(H_{a})\leq\Delta$ since $\Delta\geq4$ and $G_K'$ has 
vertex degrees bounded by those of $G$ besides the added crossing vertices.
By the previous inductive argument, every $e\in K$ can be (independently) drawn in $H_a$
with at most $\ell_e+2|F_2|\cdot\lfloor\Delta/2\rfloor$ crossings.
Recall also that $\sum_{e\in K}\ell_e\leq d\cdot\crg(G_{KF}'+K)$
by Theorem~\ref{thm:apxmei}.
Then there are at most $|K|^2/2$ possible crossings between pairs of edges from~$K$.
Altogether, all the edges of $K$ can be drawn (simultaneously) into planar $H_a$
with at most $\sum_{e\in K}\ell_e+2a\cdot{\lfloor\Delta/2\rfloor}\cdot|K|+|K|^2/2
 \leq d\cdot\crg(G_{KF}'+K)+2|F_2|\cdot{\lfloor\Delta/2\rfloor}\cdot|K|+|K|^2/2$
crossings.

The final task is to algorithmically construct the planar embedding $H_a$
as above from starting $G_{KF}'$, and then to draw the edges of $K$ into $H_a$.
The latter can be solved in quadratic time by a breadth-first search 
in the dual of $H_a$, applied to each edge of $K$ separately.
and postprocessing of multiple crossings as in the proof of Theorem~\ref{thm:upper-cr}, 
For the former, a construction of the embedding $H_a$,
one can check that all the steps of the proofs of {\cite[Lemma~2.4]{HS06}}
and {\cite[Lemma~5]{CHM12}} are constructive and can be done in polynomial time.
Furthermore, there is a linear-time algorithm for inserting an edge into a
planar graph \cite{GMW05} which, 
applied to $H=H_{i-1}$ and $f=f_i$, achieves exactly the same planar embedding
as in the proof of Claim~\ref{cl:edgeback}.

By turning the vertices of $V(G_K')\sem V(G_K)$ back into edge crossings of
$G_K$, the whole procedure leads to a drawing of $G_K+K=G$ 
with at most the number of crossings
\begin{align*}
 c_2'&\cdot\crg(G_K) + d\cdot\crg(G_{KF}'+K) + 2\deee\cdot|K|\cdot|F_2| +|K|^2/2 \\
 & \leq c_2'\cdot\crg(G) + d\cdot\crg(G) + \Delta\cdot gr_0\cdot4c_2'\crg(G)
   	+(gr_0)^2/2\\
 & \leq (c_2' + d + 4\Delta gr_0c_2')\cdot\crg(G) +g^2r_0^2/2
\,.\end{align*}
Since $\crg(G)\geq1$, it suffices to set
$c_2=c_2'+d+4\Delta gr_0c_2'+g^2r_0^2/2$
which is a constant depending only on $g$ and~$\Delta$.
For the sake of completeness, we remark the asymptotically dominating term
in the expression for $c_2$ is $\Delta gr_0c_2'$ which grows with $\Delta^4$
and $16^g$.
\end{proof}

\section{Concluding remarks}
\label{sec:concluding}

There are several natural questions that arise in connection with our
research.

\paragraph{Extension to nonorientable surfaces.}
One can wonder whether our results, namely about approximating planar crossing
number of an embedded graph,
can also be extended to nonorientable surfaces of higher genus. 
Indeed, the upper-bound result of \cite{BPT06} holds for any surface, and
there is an algorithm to approximate the crossing number for graphs embeddable in the projective plane~\cite{GHLS08}.
We currently do not see any reason why such an extension would be impossible. 

However, the individual steps
become much more difficult to analyze and tie together, since the ``cheapest'' cut through the embedding 
can cut (a) a handle along a two-sided loop, (b) a twisted handle along a two-sided loop, or (c) a crosscap along a one-sided loop.
Hence it then does not suffice to consider toroidal grids as the sole base case
(and a usable definition of ``nonorientable stretch'' should reflect this), 
but the lower bound may also arise from a projective or Klein-bottle grid minor. 
Already for the latter, 
there are currently no non-trivial results known.
We thus leave this direction for future investigation.

\paragraph{Dependency of the constants in Theorem~\ref{thm:main-overview} on $\Delta$ and $g$.}
Taking a toroidal grid with sufficiently multiplied parallel edges
(possibly subdividing them to obtain a simple graph) easily shows that a relation between
the toroidal expanse and the crossing number must involve a factor of
$\Delta(G)^2$.
Regarding an efficient approximation algorithm for the crossing number,
avoiding a general dependency on the maximum degree seems also difficult---%
as various related algorithmic approximation results for the crossing number, e.g.,
\cite{BPT06,CM11,CHM12,CH17,DV12,GHLS08}, depend on the maximum degree, too.
However, considering the so-called minor crossing number (see
below), one can avoid this dependency at least in a special case of the torus.

The exponential dependency of the constants and the approximation ratio on $g$, 
on the other hand, is very interesting. It pops up independently in
multiple places within the proofs, and these occurrences seem unavoidable on a local scale,
when considering each inductive step independently. 
However, it seems very hard to construct any example
where such an exponential jump or decrease can actually be observed. It might be that a 
different approach with a global view can reduce the dependency 
in Theorem~\ref{thm:main-overview} to some $\mathit{poly}(g)$ factor, cf.
also~\cite{DV12}.

\paragraph{Toroidal grids and minor crossing number.}

The \emph{minor crossing number} $\mcr(G)$~\cite{BFM06} is the smallest
crossing number over all graphs $H$ that
have $G$ as a minor. Hence it is, by definition and in contrast to the
traditional crossing number, a well-behaved minor-monotone parameter.
In general, however, minor crossing number is not any easier to compute~\cite{Hl06}
than ordinary crossing number.

One can, perhaps, build a similar theory as we did in this paper,
with face-width, the minor crossing number and the so called ``face stretch''
(which is a natural counterpart of stretch in this context).
The immediate advantage of such approach would be in removing the dependency
on the maximum degree $\Delta$, which we have discussed just above.

In fact, by adapting the techniques of our paper to this new setting we are able
to prove that the toroidal expanse of a {\em toroidal} graph $G$ is within constant
factor lower and upper bounds of the minor crossing number of~$G$
(independently of~$\Delta(G)$).
Consequently, the minor crossing number can be efficiently approximated 
for toroidal graphs up to a constant factor independent of~$\Delta(G)$.
However, the a priori unexpected technical complications surrounding this
adaptation seem to make it hardly extendable to higher-genus surfaces.
We thus abandon this line of potential research.

\subsection*{Acknowledgments}

We would like to thank the anonymous referee for extensive comments and
suggestions which helped us to improve the quality and readability of this
paper.

\end{document}